\theoremstyle{plain}
\newtheorem{thm}{Theorem}[section]
\newtheorem{lem}[thm]{Lemma}
\newtheorem{cor}[thm]{Corollary}
\newtheorem{prop}[thm]{Proposition}
\theoremstyle{definition}
\theoremstyle{remark}
\newtheorem{rem}[thm]{Remark}
\newenvironment{enumeratei}{\begin{enumerate}[\upshape (i)]\setlength{\itemsep}{4pt}}{\end{enumerate}}
\newenvironment{enumeratea}{\begin{enumerate}[\upshape (a)]\setlength{\itemsep}{4pt}}{\end{enumerate}}
\newenvironment{enumeraten}{\begin{enumerate}[\upshape 1.]\setlength{\itemsep}{4pt}}{\end{enumerate}}
\renewcommand{\le}{\leqslant}
\renewcommand{\ge}{\geqslant}
\newcommand{\ra}{\rangle}
\newcommand{\la}{\langle}
\newcommand{\eps}{\varepsilon}
\newcommand{\norm}[1]{\left\Vert#1\right\Vert}
\newcommand{\abs}[1]{\left\vert#1\right\vert}
\newcommand{\ie}{\emph{i.e.,}\ }
\newcommand{\as}{\emph{a.s.}~}
\newcommand{\iid}{\emph{i.i.d.}~}
\newcommand{\eg}{\emph{e.g.,}\ }
\newcommand{\equald}{\stackrel{\mathrm{d}}{=}}
\def\qed{\hfill $\blacksquare$}
\let\ga=\alpha \let\gb=\beta \let\gc=\gamma \let\gd=\delta 
     \let\gl=\lambda 
 \let\go=\omega   \let\gs=\sigma  
 \let\gD=\Delta  \let\gL=\Lambda
\newcommand{\cA}{\mathcal{A}}\newcommand{\cB}{\mathcal{B}}
\newcommand{\cL}{\mathcal{L}}
\newcommand{\cP}{\mathcal{P}}
\newcommand{\vzero}{\mathbf{0}}
\newcommand{\vp}{\mathbf{p}}
\newcommand{\vu}{\mathbf{u}}
\newcommand{\vv}{\mathbf{v}}\newcommand{\vw}{\mathbf{w}}\newcommand{\vx}{\mathbf{x}}
\newcommand{\vy}{\mathbf{y}}\newcommand{\vz}{\mathbf{z}}
\newcommand{\dG}{\mathds{G}}
\newcommand{\dN}{\mathds{N}}
\newcommand{\dQ}{\mathds{Q}}
\newcommand{\dR}{\mathds{R}}
\newcommand{\dZ}{\mathds{Z}}
\newcommand{\sA}{\mathscr{A}}
\newcommand{\sB}{\mathscr{B}}
\newcommand{\sE}{\mathscr{E}}
\newcommand{\sP}{\mathscr{P}}
\newcommand{\sS}{\mathscr{S}}
\DeclareMathOperator{\E}{\mathds{E}}
\DeclareMathOperator{\pr}{\mathds{P}}
\DeclareMathOperator{\argmin}{argmin}
\newcommand{\eqd}{\,{\buildrel d \over =}\,}
\newcommand{\ball}[1]{\ensuremath{\text{\textup B}(#1)}}
\newcommand{\ballt}[1]{\ensuremath{\text{\textup B}_2(#1)}}
\newcommand{\linf}[1]{\ensuremath{\left\Vert#1\right\Vert_{\scriptscriptstyle\infty}}}
\newcommand{\lone}[1]{\ensuremath{\left\Vert#1\right\Vert}}
\newcommand{\tpi}{{\tilde\pi}}
\newcommand{\tB}{{\tilde B}}
\newcommand{\tI}{{\tilde I}}
\begin{document}                        


\title{Multiple phase transitions in long-range first-passage percolation on square lattices}

\author{Shirshendu Chatterjee}{Courant Institute, New York University}
\author{Partha S.~Dey}{Department of Statistics, University of Warwick}






\begin{abstract}
    We consider a  model of long-range first-passage percolation on the $d$ dimensional square lattice $\dZ^d$ in which any two distinct vertices $\vx, \vy \in \dZ^d$ are connected by an edge having exponentially distributed passage time with mean $\lone{\vx-\vy}^{\ga+o(1)}$, where $\ga>0$ is a fixed parameter and $\lone{\cdot}$ is the $\ell_1$--norm on $\dZ^d$. We analyze the asymptotic growth rate of the set $\cB_t$,  which consists of all $\vx \in \dZ^d$ such that the first-passage time between the origin $\vzero$ and $\vx$ is at most $t$, as $t\to\infty$. We show that depending on the values of $\ga$ there are four growth regimes:
    (i) instantaneous growth for $\ga<d$,
    (ii) stretched exponential growth for  $\ga\in (d,2d)$,
    (iii) superlinear growth for $\ga\in (2d,2d+1)$ and finally
    (iv) linear growth for $\ga>2d+1$ like the nearest-neighbor first-passage percolation model corresponding to $\ga=\infty$.
\end{abstract}

\maketitle



\tableofcontents



\section{Introduction}\label{sec:int}

We consider the infinite complete graph on the vertex set $\dZ^d$, say $\dG^d$, and a non-increasing positive function $r:(0,\infty) \to (0,\infty]$. To each edge $e$ of $\dG^d$ we assign an independent random weight of the form $\omega_e/r_e$, where $r_e$ is given by the value of the function $r$ evaluated at the Euclidean distance between the endpoints of the edge $e$ and $\{\omega_e\}$'s are \iid nonnegative random variables with common distribution $F$. The weight of an edge is interpreted as its {\it passage time}. Based on these passage times, one can define a {\it first-passage} metric on $\dZ^d$, in which the distance between two vertices is the minimum time required to reach one of them from the other using any of the paths in $\dG^d$ joining the two, and study the asymptotic growth of the associated $t$--ball (the set of vertices which can be reached within time $t$ from the origin) as $t$ tends to infinity.

In this paper, we focus on the case in which $F$ is the exponential distribution, and show that the family of {\it stochastic growth models} (indexed by the set of nonnegative non-increasing functions) exhibits a wide variety of growth behavior including instantaneous growth, exponential growth, any stretched exponential growth (the $t$--ball can have diameter and volume of order $\exp(t^{\theta+o(1)})$ for any $\theta \in (0,1)$), any superlinear growth (the $t$--ball can have diameter of order $t^{\theta+o(1)}$ for any $1<\theta<\infty$) and linear growth for different choices of the function $r$. This phenomenon occurs in much more general set-up when $F$ satisfies certain moment condition. In particular the phase transition between different growth behaviors depend on the behavior of $F$ near 0 and near infinity, here we will depict this for the case when $F$ is a positive power of exponential distribution.

This problem bridges between two vast areas of study: long-range percolation and nearest-neighbor first-passage percolation. We briefly discuss both of these areas in the following two Sections~\ref{subsec:lrp} and~\ref{subsec:fpp}.

\subsection{Long-range Percolation}\label{subsec:lrp}

During the last few decades there has been numerous research contributions which have led to a thorough understanding about the existence and type of phase transitions in different models of statistical mechanics. The simplest among such models is perhaps the Bernoulli bond percolation model, where one obtains a random graph by retaining each of the edges of a ground graph independently with probability $p\in(0,1)$. The literature on percolation theory is vast, so we mention only a few relevant references here and ask the interested readers to look into them for further ones. For an introduction and motivation to the subject and for earlier works, when the ground graph  is $\dZ^d$ with nearest-neighbor edges, we recommend \cite{G99}. See also \cite[Chapter 7]{LP05} for the treatment of percolation on general transitive graphs including homogeneous trees. Most of the focus in research related to percolation on the transitive infinite graphs has been on proving the existence of phase transition (depending on the appearance of infinite cluster(s)), analytic and geometric properties of the connected components (see \eg \cite{CI02} for the properties of connectivity functions) and scaling limits for critical percolation on $\dZ^d$ (see \eg  \cite{HS00, HS00a, S01, SW01}). Percolation has also been considered on large finite ground graphs such as the complete graph on $n$ vertices (which gives rise to the famous Erd\"os-R\'enyi random graph model), {\it small-world} and {\it scale-free} networks (in the context of epidemiology \cite{MN00, SCB02}), sparse random graphs (in the context of robustness of networks \cite {CNSW00}) and $n$-dimensional hypercube \cite{BCH06}.

An extension of the Bernoulli bond percolation model is the long-range percolation (LRP) model, in which each pair of distinct vertices $\vx, \vy \in \dZ^d$ is connected by an edge with probability $p_{\vx,\vy} \sim \gb\lone{\vx-\vy}^{-\ga+o(1)}$ (as $\lone{\vx-\vy}$ goes to infinity) for some parameters $\ga, \gb>0$. We denote the associated random subgraph of $\dG^d$ by $\dG^d_\vp$, where $\vp=(p_{\vx,\vy}\mid\vx,\vy\in\dZ^{d})$. This model was originally introduced in the mathematical-physics literature as an example of a model which exhibits phase transition even in one dimension, it also displays discontinuous transition of percolation density for $\ga=2$ in one dimension as $\gb$ varies. We refer the readers to \cite{NS86, AN86, S99, IN88} for more details about these works. Later, Benjamini and Berger \cite{BB01} have proposed LRP on finite cubic lattices to be models for social networks in connection with the study of ``small world" phenomenon \cite{WS98}; and in general LRP on $\dZ^d$ has gained interest as models of graphs with nontrivial volume growth. Most of the research focus in LRP has been on

\begin{enumeratea}
    \item scaling properties of the random metric $T^\vp(\cdot,\cdot)$ on $\dZ^d$ induced by the LRP random graph $\dG^d_\vp$ (see \cite{B04a}),
    \item the volume growth of the associated balls $\cB^\vp_t:= \{\vx\in\dZ^d: T^\vp(\vzero, \vx) \le  t\}$ (see \cite{B09, T10}) and
    \item the growth behavior of the diameter $D_L^{\vp}$ of the largest connected component in $\dG^d_\vp\cap [-L, L]^d$ (the restriction of $\dG^d_\vp$ to $[-L , L]^d$).
\end{enumeratea}

Combining contributions of numerous authors, it is known (sometimes conjectured but unproved) that for $p_{\vx,\vy}=\gb\lone{\vx-\vy}^{-\ga+o(1)}$ there are five distinct regimes depending on the relative positions of $\ga$ and $d$. The diameter $D_L^{\vp}$ is
\begin{enumerate}[$\qquad$]\setlength{\itemsep}{4pt}
    \item $\to \lceil \alpha/(d-\alpha)\rceil$ for $\ga < d$ due to~\cite[Example 6.1]{BKPS11}

    \item $\asymp \log L/\log\log L$ for $\alpha=d$ due to~\cite{CGS02}

    \item $= (\log L)^{\Delta(\alpha)+o(1)}$ for $d<\alpha<2d$ due to~\cite{B04a, B09}

    \item $= L^{\theta(\beta)+o(1)}$ for $p_{\vx\vy}=\beta\lone{\vx-\vy}^{-2d}$ (conjectured in~\cite{BB01} for any $d\ge 1$. See~\cite{CGS02} for a general upper bound for $\theta(\gb)$ and~\cite{DS13} for existence of $\theta(\gb)$ in $d=1$)

     \item $\asymp L$ for $\alpha>2d$ (expected~\cite{BB01} for any $d \ge 1, \gb>0$. See~\cite{B04} for a lower bound for all $\gb$. The upper bound holds for $\gb$ large)

\end{enumerate}

Here $a_L \asymp b_L$ means $a_L/b_L$ stays away from 0 and infinity with probability tending to 1 as $L \to \infty$. Other related areas of research involving LRP models include study of simple random walk on $\dG^d_\vp$ (\eg conditions for transience and recurrence \cite{B02}, bounds for spectral gap and heat kernel \cite{CS09}, scaling limits to Brownian motion or stable processes) and on its restrictions to the $d$-dimensional box $[-L, L]^d$ (\eg mixing time \cite{BBY08}). However, understanding these aspects require knowledge about much finer structure of the random graph~$\dG^{d}_{\vp}$.

\subsection{First-passage Percolation}\label{subsec:fpp}

Parallel to the development of the percolation theory, there has always been interest in studying different aspects of {\it shortest paths} between two vertices of  deterministically or randomly weighted graphs. In this regard, another classical model, the standard {\it first-passage percolation} (FPP) model, has gained a lot of interest in the mathematical-physics literature since its introduction in~1965 \cite{HW65} and has developed into an independent field by now.~FPP was originally introduced for the graph $\dZ^d$ with nearest neighbor edges in the context of flow of fluids through a random porous medium. We refer to \cite{SW78, K86} for an account of earlier works and to \cite{GK12} for recent results. Later, FPP on  $\dZ^d$ has been used extensively as the basic model in a variety of fields including competing infections in epidemiology \cite{H05, GM05, B10}, growing interfaces in statistical-physics \cite{KS91}. Also, FPP on other large finite graphs (\eg the complete graph \cite{J99}, sparse locally tree-like random graphs \cite{B08, BHH10, HHM01}) have been used for modeling information spreading and flows through networks.

In this model, each edge $e$ of a ground graph is associated with its {\it passage time} (or weight), and the passage times are independent and have common distribution $F$ supported on $[0,\infty]$. The passage time of a finite path in the ground graph is the sum of passage times of the edges present in the path, and the {\it first-passage time} $T^F(\vx,\vy)$ between two vertices $\vx$ and $\vy$ of the ground graph is the minimum passage time of a finite path joining them. Note that $T^F(\cdot,\cdot)$ is always a (random) pseudo-metric, and it is a random metric (which is called the {\it first-passage metric} on $\dZ^d$ associated with $F$) if $F$ has no atom at 0. Moreover, $T^F(\cdot,\cdot)$ can also be interpreted as the time required to communicate between its two arguments. While percolation theory deals with issues like connectivity of distant points of some context-dependent space and properties of connected clusters, the main focus of research in FPP is to analyze

\begin{enumerate}[$\bullet$]
    \item the first-passage metric -- (a) scaling properties, (b) fluctuations, (c) scaling limits,
    \item the associated first-passage balls $\cB^F_t:=\{\vx: T^F(\vzero,\vx)\le  t \}$ -- (a) the time evolution, (b) existence of asymptotic shape, (c) analytic and geometric properties of the limiting shape.
\end{enumerate}

It is well known that in any direction $\vx\in\dZ^d$ the first-passage metric $T^F$ on $\dZ^d$ grows linearly with the Euclidean metric, \ie $T^F(\vzero, n\vx)/n$ has a positive and finite limit as $n\to\infty$, and $T^F(\vzero,n\vx)$ has sublinear fluctuation provided $F(0) < p_c(d)$ (the critical bond percolation probability for $\dZ^d$ with nearest-neighbor edges). In addition, under suitable moment condition on $F$ (see \cite{CD81}), $\cB^F_t$ grows linearly in $t$ and has a deterministic limiting shape, \ie $\cB^F_t \approx (t B) \cap \dZ^d$ as $t\to\infty$ for some nonrandom compact set $B\subseteq \dR^d$. Although many estimates and techniques are available to analyze the distribution of $T^F(\vzero, n\vx)$, its distributional convergence as $n\to\infty$ is almost completely open.

The behavior of FPP on general ground graphs is not universal. There are large finite ground graphs such that, even with mean one exponentially distributed edge-weights,  the ratio of the first-passage metric and the graph metric evaluated at a typical pair of vertices of the graph decays to 0 rapidly (in case of complete graph \cite{J99}) or slowly (in case of a family of sparse locally tree-like random graphs \cite{BHH10}) as the size of the graph grows to infinity. Here, we consider a long-range version of the FPP model on the $d$-dimensional lattice and analyze the scaling properties of the associated first-passage metric.

\subsection{Appearance of long-range first-passage percolation}

Although FPP was originally introduced and extensively studied in the nearest-neighbor settings, the long-range version of it (which we denote by LRFPP) naturally appears in many applications. For instance, theoretical biologists have used certain version of LRFPP for modeling biological invasion of species \cite{MMC11, CMM06, SRC11, FM04}. Along with many other factors they use dispersal kernels $r(\cdot)$ with heavy tails as part of their models for dispersal mechanism of biological objects (such as seeds, pollen, fungi etc.). However, most of their conclusions are based on simulations in two dimensional grid and non-rigorous heuristics. In \cite{CMM06}, followed by \cite{MMC11}, the authors have recognized two phases of spatio-temporal behavior, which they call long-distance dispersal and short-distance dispersal, based on whether the second moment of the dispersal kernel is infinite or finite. They argue that under finite second moment condition(short-distance dispersal regime) the growth behavior of the region reachable within time $t$ is same as that in nearest neighbor (or finite range) FPP. On the other hand, the authors in \cite{FM04} have recognized one additional phase, which they call medium-distance dispersal, but they haven't specified where the transitions between different phases occur. As we will prove here, the situation is much more delicate, and there are at least four distinct phases (with three critical points in between) depending on the heavy tail index of the dispersal kernel.

Aldous \cite{A10} has considered communication of continuously arriving information through a finite agent network in a certain game theoretic set-up.  In one of the cases, where the network topology is a two dimensional discrete torus and the communication cost between any two agents is a nondecreasing function of the Euclidean distance between them, the main technical tool to understand the time evolution of the fraction of informed agents is the analysis of the LRFPP model, which we propose here, on large two dimensional discrete torus. Aldous has proposed a simplified version of this LRFPP model, which he has named {\it short-long FPP}, in which agent network topology is a discrete torus, each pair of nearest-neighbor agents communicate at rate one and all other pairs of agents communicate at a rate which depends only on the size of the torus regardless of the distance between the agents. The continuous analogue of the short-long FPP model  has been analyzed rigorously on (two dimensional) real torus \cite{CD11} and on finite Riemannian manifolds \cite{BR12}. Our model is, in a sense, a generalization of the nearest-neighbor FPP and LRP.

In some sense, ours is not the first attempt to analyze LRFPP rigorously. Mollison \cite{M72} has considered similar models in the context of spatial propagation of simple epidemics in one dimension. He has proved linear growth in one dimension when the dispersal kernel has heavy tail index higher than $3$ ($=2\cdot1+1$).

In the physics literature, long range interactions for epidemic models have been proposed as more realistic descriptions in different non-equilibrium phenomenon compared with their short-range counterparts. Grassberger~\cite{GB86} introduced a variation of the epidemic processes with infection probability distributions decaying with the distance as a power-law. The model was analyzed nonrigorously in~\cite{JOWH98} using field-theoretic calculations and in~\cite{HH98} using numerical simulations. Both the articles predict $\ga=d+2$ as the phase transition point to get short-range behavior with linear growth, which is false in dimension $2$ and above by our result. The main issue is with large but finite cutoff, where one really gets $\ga=d+2$ as the phase transition point, with the diffusion coefficient of the  infection probability distributions changing from infinite to finite.

Here we will address the general case in all dimension.

\subsection{Our Model}\label{subset:model}

In this paper, we consider a long-range first-passage percolation (LRFPP) model on $\dZ^d$ for
$d\ge  1$. We will use $\lone{\cdot}$ to denote the $\ell_{1}$--norm on $\dZ^{d}$.  Let
$\sE:=\{\la\vx\vy\ra: \vx, \vy \in \dZ^d, \vx\ne\vy\}$
be the edge set for the infinite complete graph on $\dZ^d$. The length of an edge $e=\la\vx\vy\ra \in \sE$ is taken to be $\lone{e} := \lone{\vx-\vy}$.
Since all $\ell_{p}$-\text{norms}, $ p\in[1,\infty],$  are equivalent, one can use anyone of them, however we will stick to the $\ell_1$--norm for convenience.

For a given nonnegative {\it communication rate function} $r(\cdot)$ on $\dR_{+}$, $r(\lone{e})$ will be the {\it rate of communication} through the edge $e$.
To each $e \in \sE$ we also assign an independent random weight $\go_e$, where $\{\go_e\}_{e\in\sE}$ are \iid with common distribution $F$ supported on $[0, \infty]$. The random variable
\[
    W_e:=\frac{\go_e}{r(\lone{e})}
\]
represents the amount of time needed (\ie {\it passage time}) to pass through the edge $e$, and for a finite $\sE$-path $\pi$ (consisting of edges from $\sE$) we define the corresponding passage time for $\pi$ to be
\begin{align}
    W_\pi := \sum_{e \in \pi}W_e = \sum_{e \in \pi}\frac{\go_e}{r(\lone{e})}.
\end{align}

Based on these $W_\pi$, the first-passage time $T(\vx,\vy)$ to reach $\vx\in\dZ^d$ from $\vy\in\dZ^d$ associated with the communication rate function $r$ is defined to be the minimum passage time over all finite $\sE$-paths from $\vx$ to $\vy$. More precisely,
\begin{align}\label{def:trxy}
    T(\vx,\vy) :=  \inf \{W_\pi\mid \pi \in \cP_{\vx,\vy}\} \text{ for } \vx, \vy \in \dZ^d,
\end{align}
where $\cP_{\vx,\vy}$ is the set of all finite $\sE$-paths  from
$\vx$ to $\vy$.

Clearly, this LRFPP model is a {\it stochastic growth} model and $T(\cdot,\cdot)$ is a random metric (assuming $F$ has no atom at 0) on $\dZ^d$, which we will refer to as the LRFPP metric. The first natural question related to this metric is how does  the associated LRFPP ball of radius $t$,
\begin{align*}
    \cB_t := \{\vx\in\dZ^d\mid T(\vzero,\vx) \le  t\},
\end{align*}
and its \emph{diameter} (viewing $\cB_t$ as a subset of $\dZ^{d}$)
\begin{align*}
    D_t := \sup\{\lone{\vx-\vy}\mid \vx,\vy \in \cB_t\},\ t\ge  0.
\end{align*}
grow as $t$ tends to infinity. We will address these questions for certain cases of $r(\cdot)$ and $F$.

We will primarily be concerned with the case when $r(k)=k^{-\ga}L(k)$ for some
$\ga>0$ and for some slowly varying function $L$, and $F$ is exponential distribution with mean one or its positive power. Note that when ``$\alpha=\infty$'' we get back the standard (nearest-neighbor) FPP model with \iid edge-weights, which is also known as Richardson's model when the edge-weights have mean one exponential distribution.

\begin{rem} \label{monotone}
    Our approach of constructing the LRFPP model naturally incorporates the monotonicity property in $r(\cdot)$, in the sense that if $r(\cdot)$ and $r'(\cdot)$
    are two communication rate functions such that $r(k) \ge r'(k)$ for all $k\ge 1$, then $T(\vx,\vy) \le T'(\vx,\vy)$ for all $\vx$ and $\vy$, $\cB_t \supseteq \cB'_t$ and $D_t \ge D'_t$, where $T'(\cdot,\cdot), \cB'_{t},D'_{t}$ corresponds to the rate function $r'$.
\end{rem}

\subsection{LRFPP as a long-distance dispersal model}\label{subsec:disperse}

When $\gl:=\sum_{\vx\in\dZ^d}r(\lone{\vx})<\infty$, the above LRFPP model can also be viewed as a long-distance dispersal model in the context of information propagation, infection spreading (Susceptible-Infected/SI epidemic model) and biological invasion of species. Note that for $r(k)=k^{-\ga}$, we have $\gl < \infty$ if and only if $\ga> d$. To fix idea, suppose there is an agent at every vertex of $\dZ^d$ and the agents are either occupied (informed/infected) or vacant (uninformed/healthy). Occupied sites never become vacant. Initially the agent at the origin is occupied at time $0$. Whenever an agent becomes occupied, it starts communicating at rate $\gl$. When the agent at $\vx$ communicates, it chooses a site $\vy$ independently with probability $r(\lone{\vx-\vy})/\gl$ and makes it occupied. All agents act independently of each other.

Let $\hat\cB_t$ denote the set of occupied vertices at time $t$. Clearly $\hat\cB_0=\{\vzero\}$ and it is easy to see that if $F$ is exponential with mean one, then
\begin{align*}
    (\cB_t: t \ge  0) \equald (\hat\cB_t: t\ge  0).
\end{align*}
Thus, our results can also be interpreted as growth results for the associated long-range dispersal models.

\subsection{Main Results}\label{subsec:res}

Recall that $T(\cdot,\cdot)$ and $\cB_t$ denote the (random) LRFPP metric on $\dZ^d$ and the corresponding $t$-ball associated with communication rate function $r(\cdot)$ and exponentially distributed edge-weights (passage times) for the complete graph on $\dZ^d$. Throughout the article  $r(\cdot)$ is a non-increasing function and for convenience we will assume that it has the form
\begin{align}\label{def:r}
    r(k)=k^{-\ga}L(k), k\ge 1,
\end{align}
for some $\ga \in [0, \infty)$ and for some slowly varying (at infinity) function $L(\cdot)$ satisfying
$L(1)=1$. Recall that, a function $L(\cdot)$ is slowly varying at infinity if
for any $a\in(0,\infty)$ we have $\lim_{x\to\infty}L(ax)/L(x)=1$.

The first natural question is whether all vertices become occupied (\ie percolation occurs) at some finite time or not starting from a single occupied vertex (or equivalently from finitely many occupied vertices) at time 0. Even if percolation does not occur at some finite time, it is not at all obvious whether $|\cB_t| < \infty$ \as~for any $t<\infty$ or not.

Hereafter $|A|$ denotes the size for a set $A$. Our first result shows that $\cB_t$ covers the entire $\dZ^d$ instantaneously ({\it instantaneous percolation regime}) when the communication rate function $r(\cdot)$ satisfies \eqref{def:r} for any $\ga < d$, whereas $|\cB_t| < \infty$ \as for any $t<\infty$ when \eqref{def:r} holds for any $\ga>d$.

\begin{thm}[Instantaneous percolation regime]\label{thm:ip}
    For the communication rate function $r(\cdot)$, define $A$ to be the integral $A:=\int_{1}^{\infty}x^{d-1}r(x)dx$.
    \begin{enumeratei}
        \item\label{ip:item1}  If $A=\infty$, then $\pr(|\cB_t|= \infty)=1$ for any $t>0$. In particular, if $r(\cdot)$ satisfies \eqref{def:r} for some $\alpha<d$, then for any $t>0$,
        \[
            \pr(\cB_t = \dZ^d)=1.
        \]
        \item\label{ip:item2}  If $A<\infty$, then there exists a constant $c>0$ depending only on $A$ and $d$ such that
        \begin{align*}
            \E(|\cB_t|) \le e^{ct} \text{ for all $t\ge  0$. }
        \end{align*}
    \end{enumeratei}
\end{thm}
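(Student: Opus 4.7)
My plan treats the three claims separately, with the last being the most involved.

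For part (ii), I use the long-distance dispersal description from Section~\ref{subsec:disperse}. Since $A<\infty$, the total communication rate from a single occupied vertex, $\lambda:=\sum_{\vy\ne\vzero}r(\lone{\vy})$, is finite: a standard sum-versus-integral comparison on $\ell_1$-shells (using $|\{\vy:\lone{\vy}=k\}|\asymp k^{d-1}$) gives $\lambda\le c_d(1+A)$. Ignoring that some communications target already-occupied vertices only overcounts births, so $|\cB_t|$ is stochastically dominated by a pure-birth (Yule) process in which every existing particle gives birth at rate $\lambda$. Since such a process started from one particle satisfies $\E(Y_t)=e^{\lambda t}$, this yields $\E(|\cB_t|)\le e^{ct}$ with $c$ depending only on $A$ and $d$.

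For the first half of part (i), a direct-edge Borel--Cantelli argument suffices. The direct-edge passage time $W_{\vzero,\vy}$ is exponential with rate $r(\lone{\vy})$, and the events $\{W_{\vzero,\vy}\le t\}$ across distinct $\vy$ involve disjoint edges and are hence independent, each of probability $1-e^{-r(\lone{\vy})t}\ge\frac{1}{2}\min(r(\lone{\vy})t,1)$. Comparing $\sum_\vy r(\lone{\vy})$ with $A$, the sum $\sum_\vy(1-e^{-r(\lone{\vy})t})$ diverges whenever $A=\infty$, so the second Borel--Cantelli lemma gives $|\cB_t|=\infty$ a.s.

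The strengthened claim $\cB_t=\dZ^d$ a.s.\ for all $t>0$ when $\alpha<d$ is the core of the theorem. A countable union over $\vx\in\dZ^d$ and over $t=1/n$ reduces it to showing $\pr(T(\vzero,\vx)\le\epsilon)=1$ for every fixed $\vx\ne\vzero$ and $\epsilon>0$. I plan to apply the second moment method to multi-hop paths: fix an integer $k\ge2$ with $(k-1)d/k>\alpha$, which exists precisely because $\alpha<d$, and, for $R\ge\lone{\vx}$, let $S_R:=\{\vy:R\le\lone{\vy}\le2R,\ R\le\lone{\vy-\vx}\le2R\}$, which has $|S_R|\asymp R^d$. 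Each of the $\asymp R^{(k-1)d}$ paths $\pi=(\vzero,\vy_1,\ldots,\vy_{k-1},\vx)$ with $\vy_i\in S_R$ has passage time distributed as a sum of $k$ independent exponentials with rates $\asymp R^{-\alpha}$, so $\pr(W_\pi\le\epsilon)\asymp R^{-k\alpha}\epsilon^k$. Letting $N_R$ count the paths with $W_\pi\le\epsilon$, this gives $\E(N_R)\asymp R^{(k-1)d-k\alpha}\epsilon^k\to\infty$ as $R\to\infty$.

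The main obstacle is controlling $\mathrm{Var}(N_R)$. Classifying pairs $(\pi,\pi')$ by the set $S\subseteq\{0,1,\ldots,k\}$ of indices where $\vy_i=\vy_i'$ (always containing $\{0,k\}$), the number of shared edges equals the number of consecutive pairs inside $S$, which is at most $|S|-2$ unless $S$ equals the whole index set (the diagonal $\pi=\pi'$). Bounding the joint probability on shared and unshared edges and summing over $S$ yields $\mathrm{Var}(N_R)/\E(N_R)^2\le\sum_{t=1}^{k-2}\binom{k-1}{t}R^{-t(d-\alpha)}\epsilon^{-t}+1/\E(N_R)$, which tends to $0$ as $R\to\infty$ precisely because $\alpha<d$ makes every $R^{-t(d-\alpha)}$ with $t\ge1$ vanish. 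Chebyshev's inequality then gives $\pr(N_R\ge1)\to1$, hence $\pr(T(\vzero,\vx)\le\epsilon)=1$ and the claim follows.
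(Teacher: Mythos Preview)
Your argument is correct. Part~\eqref{ip:item2} takes a genuinely different route: the paper does not use the Yule comparison but instead proves the pointwise estimate $\pr(T(\vzero,\vx)\le t)\le(e^{ct}-1)\,r(\lone{\vx})$ by summing the upper bound of Lemma~\ref{lem:expldp} over all $k$-step paths and invoking Lemma~\ref{lem:sumprod}\eqref{it:spc} (this is Lemma~\ref{lem:fpest}), and then sums over~$\vx$. Your coupling is shorter and more elementary; the paper's pointwise bound, however, is reused downstream in Section~\ref{sec:sbineq}.

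For the $\alpha<d$ claim in part~\eqref{ip:item1}, both proofs run a second-moment count on $k$-hop paths with $k>d/(d-\alpha)$, but the implementations differ. The paper places the $i$-th intermediate vertex in its own annulus $B_i^{(j)}$ at scale $\ell_j=2^j(k-1)^j\lone{\vx}$, which forces every edge length to be $\asymp\ell_j$ and rules out cross-position edge coincidences; this yields only $\pr(N_j\ge1)\ge\gd$ for a fixed $\gd>0$, so the paper then needs infinitely many edge-disjoint scales $j$ and the product $\prod_j(1-\gd)=0$. Your single-scale version, with all intermediate vertices in one shell $S_R$ and $R\to\infty$, avoids the multi-scale machinery at the cost of proving the stronger statement $\mathrm{Var}(N_R)/(\E N_R)^2\to0$. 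Two points in your sketch need a little more care: (a) the claim $\pr(W_\pi\le\epsilon)\asymp R^{-k\alpha}\epsilon^k$ is not uniform over $\pi$, since intermediate edges within $S_R$ can be arbitrarily short---only the inequality $\ge cR^{-k\alpha}\epsilon^k$ holds uniformly, and that is all the first-moment lower bound requires; (b) your classification by $S=\{i:\vy_i=\vy_i'\}$ misses cross-position edge sharing, e.g.\ $(\vy_1,\vy_2)=(\vy_2',\vy_1')$ shares the edge $\langle\vy_1\vy_2\rangle$ while having $S=\{0,k\}$. Such pairs contribute only $O(R^{-d})\cdot(\E N_R)^2$ to the variance and are harmless, but deserve a sentence.
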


So for $\alpha>d$ the size of the occupied set grows at a certain finite rate depending on $\ga$ and $d$. Here we show that there are many different growth regions. The first is the {\it exponential growth regime}, which is observed when \eqref{def:r} holds with $\ga=d$ and any $L(\cdot)$ satisfying some additional restrictions. Note that Theorem~\ref{thm:ip}\eqref{ip:item2} ensures that if the size of $\cB_t$ is  finite for any $t<\infty$, then it can grow at most exponentially fast.

\begin{thm}[Exponential growth]\label{thm:expg}
    Let the communication rate function $r(\cdot)$ for the LRFPP model on the complete graph with vertex set $\dZ^d$ be nonnegative and non-increasing, and satisfy \eqref{def:r} with $\ga=d$ and some $L(\cdot)$ having the properties
    \begin{align}
        \int_{1}^{\infty} \frac{L(x)}{x}dx  &<\infty \text{ and }
        \int_{1}^{\infty} \frac{-\log L(x)}{x(\log x)^2}dx <\infty.\label{eq:expg}
    \end{align}
    Then there exist constants $0< c <C <\infty$ depending on $L(\cdot)$ such
    that
    \begin{align*}
        \lim_{\norm{\vx} \to \infty} \pr\left(c \le  \frac{T(\vzero,\vx)}{\log\norm{\vx}} \le  C \right) = 1.
    \end{align*}
    Moreover, there is a constant $a>0$ such that $\E\abs{\cB_t}\ge e^{at}$ for any $t\ge 0$.
\end{thm}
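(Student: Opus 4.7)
The plan is to establish three separate statements: the lower bound $T(\vzero,\vx)\ge c\log\lone{\vx}$ in probability, the upper bound $T(\vzero,\vx)\le C\log\lone{\vx}$ in probability, and the exponential lower bound on $\E\abs{\cB_t}$. Since condition~\eqref{eq:expg} ensures $\int_1^\infty x^{d-1}r(x)\,dx\asymp\int_1^\infty L(x)/x\,dx<\infty$, Theorem~\ref{thm:ip}\eqref{ip:item2} already gives the volume upper bound $\E\abs{\cB_t}\le e^{ct}$, which serves as the starting point for the lower bound on $T$.

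For the lower bound on $T$, I would use a Chernoff-type first-moment estimate on paths. For a path $\pi$ of length $k$ with edge rates $r_1,\ldots,r_k$, the passage time $W_\pi$ is a sum of independent exponentials, and for any $\theta>0$
\begin{align*}
    \pr(W_\pi\le t)\le e^{\theta t}\prod_{i=1}^k \frac{r_i}{r_i+\theta}\le e^{\theta t}\theta^{-k}\prod_{i=1}^k r_i.
\end{align*}
Summing over paths $\vzero\to\vx$ of length $k$ gives the $k$-fold convolution $r^{*k}(\vx)$, so a union bound over $k$ and $\pi$ yields $\pr(T(\vzero,\vx)\le t)\le e^{\theta t}G_\theta(\vx)$ with $G_\theta(\vx):=\sum_{k\ge 1}\theta^{-k}r^{*k}(\vx)$. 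For $\theta$ slightly above $\lambda:=\sum_\vy r(\lone{\vy})$, Fourier analysis based on the slowly varying tails of $r/\lambda$ shows that $G_\theta(\vx)$ decays polynomially in $\lone{\vx}$. Taking $t=c\log\lone{\vx}$ with $c$ small enough drives the right-hand side to zero.

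The technical heart is the upper bound on $T$, which I would establish by a multi-scale inductive construction over dyadic radii $R_j=2^j$ with increasing density thresholds $\rho_j$. The goal is to exhibit stopping times $\tau_j\le Cj$ such that $\abs{\cB_{\tau_j}\cap\ball{R_j}}\ge\rho_j$ with high probability. Once $\rho_j$ infected vertices occupy $\ball{R_j}$, there are $\asymp R_{j+1}^d$ potential new targets in the shell $\ball{R_{j+1}}\setminus\ball{R_j}$, each reached from any fixed vertex of $\cB_{\tau_j}$ at rate $\asymp r(R_j)=R_j^{-d}L(R_j)$, giving a collective infection rate of order $\rho_j L(R_j)$; a Poisson concentration estimate then shows that within $O(1)$ additional time, $\rho_{j+1}$ fresh vertices in $\ball{R_{j+1}}$ are infected. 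The two conditions in~\eqref{eq:expg} are calibrated precisely to close this induction: the first condition keeps the total communication rate finite, while the second condition, which rewrites as $\sum_j -\log L(2^j)/j^2<\infty$, bounds the summed slackness across scales and forces $\tau_J=O(J)=O(\log\lone{\vx})$. Once this is in hand, the lower bound $\E\abs{\cB_t}\ge e^{at}$ is immediate: for $\vx$ with $\lone{\vx}\le e^{t/(2C)}$, $\pr(\vx\in\cB_t)\to 1$, so $\E\abs{\cB_t}\gtrsim e^{dt/(2C)}$, giving $a=d/(2C)$.

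The main obstacle is expected to be this multi-scale argument, since the required density buildup must be quantified carefully at each of $O(\log\lone{\vx})$ scales and the deviation probabilities summed. The delicate role of the second integrability condition on $L$---restricting how fast $L$ may decay while still allowing summable slack across scales---appears to be the key novelty over standard nearest-neighbor first-passage percolation arguments.
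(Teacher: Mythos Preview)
Your lower bound on $T(\vzero,\vx)$ is essentially the paper's argument: the first--moment bound over paths is exactly Lemma~\ref{lem:fpest}, and the combinatorial estimate $\sum_k \theta^{-k} r^{*k}(\vx)\lesssim r(\lone{\vx})$ is Lemma~\ref{lem:sumprod}\eqref{it:spc}. Fourier analysis is not needed and the paper does not use it. Likewise, deriving $\E|\cB_t|\ge e^{at}$ from the upper bound on $T$ is a valid alternative to the paper's citation of \cite{T10}.

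The upper bound on $T(\vzero,\vx)$, however, has a genuine gap. Your scheme doubles the radius at each step ($R_j=2^j$, so $J\asymp\log\lone{\vx}$ steps) and asserts that from $\rho_j$ infected vertices one gains $\rho_{j+1}$ new ones in the next shell within $O(1)$ time, because the collective infection rate is $\asymp\rho_j L(R_j)$. But $L(R_j)\to 0$, so either $\rho_{j+1}/\rho_j\le cL(R_j)$ and the density collapses to $0$ long before $J$, or you must spend time $t_j\gtrsim 1/L(R_j)$ at step $j$. Take the admissible choice $L(x)=(\log x)^{-2}$, which satisfies both conditions in~\eqref{eq:expg}: then $\sum_{j\le J}1/L(2^j)\asymp\sum_{j\le J}j^2\asymp J^3$, not $O(J)$. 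The second integrability condition, rewritten as $\sum_j(-\log L(2^j))/j^2<\infty$, controls $\log L$ on a dyadic grid, not $1/L$; it cannot rescue a scheme that needs $\sum 1/L(2^j)=O(J)$. More fundamentally, your argument tracks only the \emph{size} of $\cB_{\tau_j}\cap\ball{R_j}$ and never forces the infected set to approach the specific target~$\vx$.

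The paper's mechanism is quite different. It uses the binary-tree path construction of Section~\ref{sec:msa}: one fixes a function $f(x)=\sqrt{x}/(L(x)^{1/d}(\log x)^{1/(2d)})$, sets $f_0=\lone{\vx}$ and $f_i=f(f_{i-1})$, and builds a path by successively connecting pairs of boxes of radius $f_{i+1}$ separated by distance $\asymp f_i$ via their minimum-weight edge. There are only $k\asymp\log\log\lone{\vx}$ levels, with $2^i$ edges at level~$i$, and the total weight is governed by $\gL$ in~\eqref{gL}. The specific choice of $f$ makes the per-edge cost at level $i$ equal to $L(f_{i-1})\log f_{i-1}$, and the key claim $f_i\le n^{c/2^i}$ (established via the second condition in~\eqref{eq:expg}) yields $2^{i-1}\log f_{i-1}\asymp\log n$, whence $\gL\asymp(\log n)\sum_i L(f_{i-1})$, which is $O(\log n)$ by the first condition. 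The crucial structural point you are missing is that the correct scales shrink like $n^{1/2^i}$, not like $2^{-j}n$: this is what allows the number of levels to be $\log\log n$ rather than $\log n$, and it is precisely at this geometric rate that the two integrability hypotheses on $L$ become the right ones.
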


 Between the two properties of $L(\cdot)$ mentioned in
 \eqref{eq:expg}, the first one ensures that the growth of the LRFPP ball is finite at any finite time, whereas the second one enables us to construct a path between any two vertices of $\dZ^d$, which are located at large $\ell_{1}$-distance away from each other, such that the passage time of the path is logarithmic in the Euclidean distance between them. The second condition of \eqref{eq:expg} arises quite naturally and is somewhat optimal. Note that the same condition also arises in case of the LRP model on $\dZ^d$ corresponding to exponential growth (see~\cite{T10}).

 Next, we focus on the {\it stretched exponential growth regime}, which is observed when \eqref{def:r} holds for some $\ga \in (d, 2d)$.

\begin{thm}[Stretched exponential growth]\label{thm:seg}
   Let the communication rate function $r(\cdot)$ for the LRFPP model on the complete graph with vertex set $\dZ^d$ be nonnegative and non-increasing, and satisfy \eqref{def:r} for some $\ga \in (d,2d)$. Define
   \[ \gD(\ga, d) :=\frac{\log 2}{\log(2d/\ga)} \in (1, \infty).\]
   Then, for any $\eps>0$, we have
    \begin{align*}
        (1) & \lim_{\norm{\vx} \to \infty} \pr\left(\gD(\ga, d)-\eps\le\frac{\log T(\vzero,\vx)}{\log\log\norm{\vx}} \le  \gD(\ga, d) + \eps\right)=1,\text{ and}\\
        (2) & \lim_{t\to\infty}\pr\left(\left|\frac{\log\log D_t}{\log t} - 1/\gD(\ga, d)\right| \le \eps \right) =1.
    \end{align*}
\end{thm}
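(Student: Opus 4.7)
The theorem has two parts: (1) the concentration of $T(\vzero,\vx)$ as $\lone{\vx}\to\infty$, and (2) the dual statement for $\log\log D_t/\log t$ as $t\to\infty$. Since $\vzero\in\cB_t$ we have $\lone{\vx}\le D_t\le 2\max_{\vy\in\cB_t}\lone{\vy}$ for $\vx\in\cB_t$, so part (2) will follow from (1) together with a union bound once enough uniformity is established. I therefore focus on (1). Set $\gb:=\ga/(2d)\in(1/2,1)$, so that $\gD(\ga,d)=\log 2/\log(1/\gb)$. The slowly varying factor $L$ contributes only $o(1)$ corrections in the exponents (by Potter bounds) and I suppress it throughout.

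\textbf{Upper bound.} The plan is an explicit hierarchical path construction. Given $n=\lone{\vx}$, define nested scales $n_k:=n^{\gb^k}$ for $k=0,1,\ldots,K$ with $K:=\lceil\log\log n/\log(1/\gb)\rceil$, so $n_K=O(1)$. At level $k$, to bridge a displacement of order $n_k$, I choose two cubes of side $\asymp n_{k+1}$ around the two current endpoints: there are $\asymp n_{k+1}^{2d}=n_k^{\ga}$ candidate inter-cube edges, each of exponential weight with rate $\asymp n_k^{-\ga}$, so their minimum is approximately $\mathrm{Exp}(1)$ and hence $O(1)$ with high probability. This reduces the problem to two independent sub-problems at scale $n_{k+1}$. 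Iterating $K$ times yields a path of $\sum_{k=0}^{K-1}2^k\asymp 2^K\asymp(\log n)^{\gD}$ edges, each contributing $O(1)$ weight. A Chernoff bound on the sum of $O((\log n)^\gD)$ independent exponentials, together with a union bound over the $O(\log\log n)$ levels, gives $T(\vzero,\vx)\le(\log n)^{\gD+o(1)}$ with high probability.

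\textbf{Lower bound.} Here the plan is a matching multi-scale renormalization, analogous to Biskup's lower bound for chemical distance in long-range percolation. The basic input is the first-moment bound
\[
    \pr(T(\vu,\vv)\le s)\ \le\ \sum_{\pi:\vu\to\vv}\prod_{e\in\pi}\bigl(1-e^{-s\,r(\lone{e})}\bigr),
\]
valid because $\{W_\pi\le s\}\subseteq\{W_e\le s\text{ for all }e\in\pi\}$ by nonnegativity. The right-hand side is the expected number of $\vu$-$\vv$ paths in a long-range percolation graph with edge probabilities $q_e\asymp s\cdot\lone{e}^{-\ga}$. A hierarchical induction on the scales $n_k=n^{\gb^k}$ — showing at scale $j$ that crossing distance $\asymp n_j$ in the rescaled LRP requires at least $2^{K-j}$ edges with high probability, via a dichotomy between ``use a long edge of length $\ge n_{j+1}$'' (rare because the expected count of such fast edges is controlled by $n_{j+1}^{d-\ga}$) and ``decompose into two sub-problems at scale $n_{j+1}$'' — transfers to a lower bound on the total weight. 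The crossing length doubles per level and the number of levels is $K\asymp\log\log n/\log(1/\gb)$, producing the $(\log n)^{\gD}$ lower bound when $s=(\log n)^{\gD-\eps}$.

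The main technical obstacle lies in the lower bound: a naive first-moment/union bound is insufficient because the rescaled LRP may be supercritical at the relevant value of $s$, so the expected number of open paths is large and must be controlled scale-by-scale. The delicate balance that the argument must respect is that each inductive step contribute exactly a factor of $2$ to the required path length (from binary branching in the dominant path structure, giving the $\log 2$ in $\gD$) while using exactly $K\asymp\log\log n/\log(1/\gb)$ levels (giving the $\log(1/\gb)$); any slack in the dichotomy or in the first-moment bound at a given scale degrades the exponent. Once (1) is in hand, part (2) follows: the upper bound on $T$ yields a vertex in $\cB_t$ at distance $\ge\exp(t^{1/\gD-o(1)})$, so $D_t\ge\exp(t^{1/\gD-o(1)})$; and the lower bound on $T$, combined with a union bound over $\vx$ with $\lone{\vx}\ge\exp(t^{1/\gD+o(1)})$, excludes such $\vx$ from $\cB_t$, yielding $D_t\le\exp(t^{1/\gD+o(1)})$ with high probability.
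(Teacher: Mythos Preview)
Your upper bound via hierarchical path construction with scales $n_k=n^{\gb^k}$, $\gb=\ga/2d$, matches the paper exactly (Proposition~\ref{prop:Tgaupbd}\eqref{it:upbdc}, proved in Section~4.3), as does your derivation of (2) from (1).

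The lower bound has a genuine gap. Your displayed first-moment inequality is correct but, as you admit, useless once the associated LRP is supercritical. You then appeal to Biskup's lower bound on the LRP \emph{chemical distance}. But a lower bound on the number of edges in any open LRP path does not yield a lower bound on the \emph{sum of weights} in LRFPP: a path of $M$ edges each of weight $\eps$ has total weight $M\eps$, which can be $\ll s$. Your phrase ``transfers to a lower bound on the total weight'' is precisely where the additive structure of passage times must enter, and your sketch never says how. A chemical-distance bound for the LRP with fixed threshold $s$ cannot by itself force $T(\vzero,\vx)>s$.

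The paper's lower bound (Proposition~\ref{Tgalwbd}) takes a genuinely different route. It works with $g(t):=\E|\cB_t|$ and, by splitting the optimal $\vzero$--$\vx$ path at its longest edge (Lemma~\ref{lem:ht}), derives the self-bounding convolution inequality
\[
(g(t)-1)^{\ga/d}\ \le\ c\Bigl(t^\ga\int_0^t g(t-y)\,g(y)\,dy+e^{-\gd t}\Bigr).
\]
The point is that conditioning on the location and weight of the longest edge decouples the two ends into independent sub-problems whose passage times \emph{add up} to at most $t$; this additive splitting is exactly what your LRP comparison discards. The inequality is then solved by iterated substitution (Theorem~\ref{thm:gtbd}): since the exponent $\ga/d>1$, each iteration contracts the bound, and after $\asymp\log t$ steps one obtains $\log g(t)\le c\,t^{1/\gD}(\log t)^{O(1)}$, whence $\pr\bigl(T(\vzero,\vx)\le(\log\lone{\vx})^{\gD-\eps}\bigr)\to 0$. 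The method is adapted from Trapman's analysis of LRP ball growth, not from Biskup's chemical-distance argument.
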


Note that as $\ga$ increases from $d$ to $2d$, the value of $1/\gD(\ga,d)$ strictly decreases from 1 to $0$. Thus, the family of the LRFPP models, which satisfies the hypothesis of Theorem \ref{thm:seg} exhibits all possible ``stretched exponential" growth behavior.

This together with the monotonicity property of the LRFPP model (see Remark
\ref{monotone}) indicates that when $\ga\ge 2d$, the growth rate of the occupied set is slower than any stretched  exponential.
Now we present some bounds for the LRFPP metric and diameter of the
associated LRFPP ball when \eqref{def:r} holds with $\ga=2d$ and $L\equiv 1$. These bounds capture the order of
magnitude for the growth of the LRFPP ball at this critical value of $\ga$.

\begin{thm}[Log correction for $\ga=2d$]\label{thm:crit2d}
    Let $r(k)=k^{-2d}, k\ge 1,$ be the communication rate function for the LRFPP model on the complete graph with vertex set $\dZ^d$. There exist constants $0< c <C <\infty$ depending only on $d$ such  that
    \begin{align*}
        (1) & \lim_{\norm{\vx} \to \infty} \pr\left(c \le  \frac{\log T(\vzero,\vx)}{\sqrt{\log\norm{\vx}}} \le  C \right) = 1,\text{ and}\\
        (2) & \lim_{t\to\infty} \pr\left(C^{-2} \le \frac{\log D_t}{(\log t)^2} \le c^{-2} \right) = 1.
    \end{align*}
\end{thm}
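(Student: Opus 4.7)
The plan is to prove matching upper and lower bounds on $T(\vzero,\vx)$ and then transfer them to bounds on $D_t$ via the equivalence $\vx\in\cB_t\Longleftrightarrow T(\vzero,\vx)\leq t$. A w.h.p.\ lower bound $T(\vzero,\vx)\geq\exp(c\sqrt{\log\norm{\vx}})$ forces every $\vx\in\cB_t$ with $\norm{\vx}$ large to satisfy $\log\norm{\vx}\leq c^{-2}(\log t)^{2}$, giving the upper bound on $D_{t}$ in (2); the dual implication extracts the lower bound on $D_t$ from the w.h.p.\ upper bound on $T(\vzero,\vx)$ in (1).

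For the upper bound on $T(\vzero,\vx)$ I would construct an explicit path using a balanced binary-tree hierarchy of long edges. Fix scales $\norm{\vx}=L_{0}>L_{1}>\cdots>L_{N}=1$. At level $k$ there are $2^{k}$ active subgoals, each asking for one edge of length $\asymp L_{k}$ connecting two disjoint boxes of side $L_{k+1}/2$ placed near the current endpoints. Each box pair offers $\asymp L_{k+1}^{2d}$ candidate edges of rate $\asymp L_{k}^{-2d}$, so standard extreme-value estimates produce, with overwhelming probability, a usable long edge of weight of order $(L_k/L_{k+1})^{2d}$. Summing contributions yields total weight
\begin{align*}
\sum_{k=0}^{N-1}2^{k}(L_{k}/L_{k+1})^{2d}+O(2^{N}),
\end{align*}
where the last term covers the $2^{N}$ nearest-neighbor edges used at the bottom of the hierarchy. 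Balancing the level contributions (setting $2^{k}(L_k/L_{k+1})^{2d}\equiv\tau$) and optimizing $N\asymp\sqrt{\log\norm{\vx}}$ produces $\log T\lesssim\sqrt{\log\norm{\vx}}$; a union bound over scales controls simultaneous existence of the required edges.

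For the lower bound on $T(\vzero,\vx)$ I would adapt the hierarchical counting scheme underlying Theorem~\ref{thm:seg}. Any realizing path has a unique longest edge $e_{*}$ of some length $L$ that splits the path into two subpaths using only edges of length $<L$. Setting
\begin{align*}
M(R,t):=\E\bigl|\{\vy:\norm{\vy}\leq R,\ T(\vzero,\vy)\leq t\}\bigr|
\end{align*}
and conditioning on $e_{*}$ yields a self-referential estimate of the schematic form $M(R,t)\lesssim\sum_{L\leq R}(\text{combinatorial factor for }e_{*})\cdot M(L,t)^{2}$, where the combinatorial factor accounts for the choice of endpoints of $e_{*}$ and the exponential-moment contribution of its weight. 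Iterating this inequality and balancing scales as in the upper bound shows $M(\norm{\vx},t)=o(1)$ whenever $\log t\leq c\sqrt{\log\norm{\vx}}$, which yields the desired lower bound.

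The main obstacle is the lower bound itself. In the stretched regime $d<\ga<2d$ the recursion for $M(R,t)$ contracts scales by a multiplicative factor $(2d/\ga)^{-1}<1$ at each iteration, and this contraction is precisely what drives the $(\log\norm{\vx})^{\gD(\ga,d)}$ behavior of Theorem~\ref{thm:seg}. At the boundary $\ga=2d$ the contraction factor degenerates to $1$, so the naive recursion fails and each renormalization step yields only a sub-polynomial gain in the scale. Propagating these marginal gains through $\Theta(\sqrt{\log\norm{\vx}})$ iterations, while keeping the accumulated error from swamping the main term, requires delicate bookkeeping and is ultimately the source of the $\sqrt{\log\norm{\vx}}$ correction characteristic of the critical case.
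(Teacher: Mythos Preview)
Your proposal is correct and follows essentially the same two-pronged strategy as the paper: the upper bound via a binary multi-scale path construction is exactly the ansatz of Section~\ref{sec:msa} (Case~2, $\ga=2d$), and the lower bound via a self-referential inequality for an expected-volume quantity is the content of Section~\ref{sec:sbineq}, specialized to the critical case $\theta=d/\ga=1/2$ of Theorem~\ref{thm:gtbd}. The one noteworthy difference is in how the recursion is set up: you condition on the longest edge and carry a two-variable quantity $M(R,t)$, whereas the paper first separates the spatial dependence via Lemma~\ref{lem:ht} (showing $\sup_{\norm{\vx}=k}\pr(T(\vzero,\vx)\le t)\le ck^{-\ga}h(t)$) and thereby reduces to a one-variable recursion $g(t)^{\ga/d}\le c(1+t^{\ga}\int_0^t g(y)g(t-y)\,dy)$ for $g(t)=\E|\cB_t|$; this cleaner form makes the critical-case iteration (your ``delicate bookkeeping'') entirely explicit.
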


Next, we show that any communication rate function satisfying \eqref{def:r} for some $\ga \in (2d, 2d+1)$ corresponds to the {\it superlinear growth regime}, in which the occupied set of the LRFPP model grows faster than linear at certain polynomial rate.

\begin{thm}[Superlinear growth]\label{thm:slg}
    Let the communication rate function $r(\cdot)$ for the LRFPP model on the complete graph with vertex set $\dZ^d$ be nonnegative and non-increasing, and satisfy \eqref{def:r} for some $\ga \in (2d , 2d+1)$. Define
    \[ \Gamma(\ga, d) := \ga-2d.\]
    Then, for any $\eps>0$,
    \begin{align*}
        (1) & \lim_{\norm{\vx} \to \infty} \pr\left(\Gamma(\ga, d)-\eps\le  \frac{\log T(\vzero,\vx)}{\log\norm{\vx}} \le \Gamma(\ga, d)+\eps \right) = 1,\text{ and}\\
        (2) & \lim_{t\to\infty} \pr\left(\left|\frac{\log D_t}{\log  t} - 1/\Gamma(\ga, d)\right| \le \eps \right) = 1.
    \end{align*}
\end{thm}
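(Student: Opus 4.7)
Plan. Both parts of the theorem reduce to two-sided bounds on $T(\vzero,\vx)$ for $\|\vx\|=N$ large; part~(2) then follows by standard duality.

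\emph{Upper bound on $T$.} I would show $T(\vzero,\vx) \le N^{\Gamma+\eps}$ with high probability via a dyadic meet-in-the-middle construction. Fix $B := 2^{1/\Gamma}$ and recursively partition the region between $\vzero$ and $\vx$ into sub-problems, the sub-problem at level $k$ having size $N/B^k$. At each level, a long edge joining two sibling sub-problems has $\asymp (N/B^k)^{2d}$ candidate endpoint pairs, each of rate $\asymp (N/B^{k-1})^{-\alpha}$, so the minimum passage time over these candidates is of order $(N/B^{k-1})^{\alpha-2d}$. A union bound over the $O(N^\Gamma)$ long edges in the whole recursion tree keeps each minimum within an $O(\log N)$ factor of its typical value. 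Because $B^\Gamma = 2$, the $2^k$-fold branching at level $k$ is exactly balanced by the $B^{-k\Gamma}$ reduction of the individual cost, so the per-level contribution is $O(N^\Gamma \log N)$, and summing over the $O(\log N)$ levels gives total passage time $O(N^\Gamma \log^2 N) = N^{\Gamma+o(1)}$.

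\emph{Lower bound on $T$.} I would establish $T(\vzero,\vx) \ge N^{\Gamma-\eps}$ with high probability via a first-moment estimate on $\vzero$-to-$\vx$ paths. For a path $\pi$ with edge lengths $k_1,\ldots,k_m$, the gamma-density bound gives $\pr(W_\pi \le t) \le (t^m/m!)\prod_i r(k_i)$, yielding
\[
\pr\bigl(T(\vzero,\vx)\le t\bigr) \;\le\; \sum_{m\ge 1} \frac{t^m}{m!}\, R^{*m}(\vx), \qquad R(\vu) := r(\|\vu\|).
\]
Since $\alpha>d$, $R\in\ell^1(\dZ^d)$, and the ``single big jump'' principle gives $R^{*m}(\vx) \le C\,m\,\|R\|_1^{m-1}R(\vx)$ for $\|\vx\|$ large and $m$ not too large. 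Naive summation produces the factor $e^{t\|R\|_1}$, which is prohibitive at $t = N^{\Gamma-\eps}$. To work around this, I would split the sum at a threshold $m^\star$ (of order the stable-law scale $\|\vx\|^{\alpha-d}$): for $m \le m^\star$ the single-big-jump bound together with careful Stirling-based control of $(t\|R\|_1)^m/m!$ keeps the partial sum polynomially small; for $m > m^\star$ the expected weight $\E W_\pi = \sum_i k_i^\alpha \ge N^\alpha m^{1-\alpha}$ (by $\sum k_i\ge N$ and power-mean) greatly exceeds $t$, and a Chernoff bound combined with a refined path-count absorbs the combinatorial factor.

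Part~(2) follows by duality: for the upper bound on $D_t$, take $R = t^{1/(\Gamma-\eps)}$ and apply $\pr(D_t \ge R) \le \sum_{\|\vx\|\ge R}\pr(T(\vzero,\vx) \le t)$ together with the estimate from the lower bound on $T$; the sum converges because there are polynomially many relevant $\vx$ and each contributes a super-polynomially small probability. For the lower bound on $D_t$, fix a single $\vx_0$ with $\|\vx_0\|=t^{1/(\Gamma+\eps)}$ and apply the upper-bound estimate on $T(\vzero,\vx_0)$. The main obstacle will be the lower bound on $T$: the single-big-jump bound on $R^{*m}(\vx)$ is sharp only for $m$ well below the stable-law scale, so the split sum requires careful treatment of the transitional regime around $m^\star$, and ensuring that the individual-path Chernoff bound in the large-$m$ range is strong enough to dominate the combinatorial explosion of paths is the technically delicate step.
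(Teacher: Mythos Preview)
Your upper bound on $T$ is essentially the paper's multi-scale construction (Section~4, Case~1): the paper uses $f(x)=x/a$ with $a^{\alpha-2d}>2$ chosen optimally, which is your dyadic scheme with $B$ slightly larger than $2^{1/\Gamma}$. Your choice $B=2^{1/\Gamma}$ sits exactly at the boundary and produces an extra logarithmic factor, but this is harmless for the $N^{\Gamma+o(1)}$ conclusion. So this half is fine and matches the paper.

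Your lower bound has a genuine gap. The power-mean inequality gives $\sum_i k_i^\alpha \ge N^\alpha m^{1-\alpha}$, but since $\alpha>1$ this lower bound \emph{decreases} in $m$; for $m>m^\star$ it does not ``greatly exceed $t$'' but rather tends to zero. So your large-$m$ Chernoff step is pointed in the wrong direction. More fundamentally, the first-moment bound $\sum_m (et/m)^m \sS_m^r(\vzero,\vx)$ with $\sS_m^r \le b^{m-1}\|\vx\|^{-\alpha}$ (which is sharp---think of paths that loiter near $\vzero$ for $m-1$ steps and then jump) gives $\approx N^{-\alpha}e^{bt}$, and no splitting in $m$ rescues this at $t=N^{\Gamma-\eps}$: the dangerous range is $m\sim bt$, where $(ebt/m)^m$ is of order $e^{bt}$ and the convolution bound offers no additional decay. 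The paper's own first-moment machinery (Proposition~5.2) yields only the weak exponent $(1+\alpha)/(\alpha-2d)$, far from $1/\Gamma$.

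The paper closes this gap by an entirely different mechanism: a bootstrap on the growth exponent. Starting from the weak containment $\cB_t\subseteq B(\vzero,Ct^{\gamma_0})$ supplied by Proposition~5.2, one shows (via a simple rate calculation) that with high probability no edge of length exceeding $t^\delta$ is used by time $t$. Restricting to short edges gives the comparison $T^{(\alpha)}(\vzero,\vx)\ge t^{-\delta(\beta-\alpha)}T^{(\beta)}(\vzero,\vx)$ for any $\beta>\alpha$; choosing $\beta>2d+1$ and invoking the linear-growth result (Proposition~8.1) for that $\beta$ yields an improved containment $\cB_t\subseteq B(\vzero,Ct^{\gamma_1})$ with $\gamma_1<\gamma_0$. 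Iterating produces a sequence $\gamma_m\downarrow 1/\Gamma+\eps/2$. The key idea you are missing is this coupling between different values of $\alpha$ together with the input from the linear regime; a direct path-counting argument does not seem to reach the sharp exponent.
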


Note that as $\ga$ increases from $2d$ to $2d+1$, the value of $1/\Gamma(\ga,d)$ strictly decreases from infinity to $1$. Thus, the family of the LRFPP models, which satisfies the hypothesis of Theorem \ref{thm:slg} exhibits all possible ``superlinear" growth behavior.

Finally we show that any communication rate function satisfying \eqref{def:r} for some $\ga>2d+1$ corresponds to {\it linear growth regime}, in which the growth of the occupied set in the LRFPP model is similar to that of the standard (nearest-neighbor) first-passage percolation model.

\begin{thm}[Linear growth]\label{thm:lg}
    Let the communication rate function $r(\cdot)$ for the LRFPP model on the complete graph with vertex set $\dZ^d$ be nonnegative and non-increasing, and satisfy \eqref{def:r} for some $\ga > 2d+1$. Then, for any $\vx\in\dZ^{d} \setminus \{\vzero\}$ there exists $\nu(\vx)>0$ such
    that for any $\eps>0$,
    \begin{align*}
        \lim_{n\to \infty} \pr\left((1-\eps)\nu(\vx) \le  n^{-1}T(\vzero,n\vx) \le  (1+\eps)\nu(\vx)\right) = 1.
    \end{align*}
        Moreover, $\nu(\cdot)$ can be extended to a function $\boldsymbol\nu: \dR^d\mapsto [0, \infty)$, for which $\boldsymbol\nu(\vy) \ne 0$ whenever
    $\vy \ne 0$ and
    \begin{align*}
        \pr\bigl( \{\vy \in \dR^d: \boldsymbol\nu(\vy)\le 1-\eps\} \subseteq t^{-1}\cB_{t}\subseteq &\{\vy \in \dR^d : \boldsymbol\nu(\vy)\le 1+\eps\}\\
        & \text{ for all sufficiently large } t\bigr)=1
    \end{align*}
for all $\eps>0$.
\end{thm}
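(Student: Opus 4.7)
The plan is to follow the classical Cox--Durrett shape-theorem program for nearest-neighbor first-passage percolation, adapting each step to accommodate the long-range edges. The argument splits into three stages: (a) existence of the time constant $\nu(\vx)$ via a subadditive ergodic theorem, (b) the lower bound $\nu(\vx) > 0$, where the hypothesis $\ga > 2d+1$ enters crucially, and (c) continuity of $\boldsymbol\nu$ on $\dR^d$ together with the uniform shape theorem.

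For stage (a), since $r$ is non-increasing with $r(1) = 1$, one has $T(\vzero, \vx) \le T_{\mathrm{NN}}(\vzero, \vx)$, where $T_{\mathrm{NN}}$ is the first-passage time in nearest-neighbor FPP with \iid $\mathrm{Exp}(1)$ weights obtained by coupling the common nearest-neighbor edge variables. Classical FPP theory gives $\E T_{\mathrm{NN}}(\vzero, \vx) \le C\lone{\vx}$, hence $\E T(\vzero, \vx) \le C\lone{\vx}$. The triangle inequality $T(\vzero, (n+m)\vx) \le T(\vzero, n\vx) + T(n\vx, (n+m)\vx)$ combined with translation invariance of the environment satisfies the hypotheses of Kingman's subadditive ergodic theorem, producing the \as and $L^1$ limit $\nu(\vx) := \lim_n T(\vzero, n\vx)/n$ with $0 \le \nu(\vx) \le C\lone{\vx}$.

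The main obstacle is stage (b). The threshold $\ga > 2d+1$ is dictated by the heuristic that the fastest edge of length of order $L$ among the $\asymp L^{2d}$ such edges with both endpoints in an $L$-box has typical passage time of order $L^{\ga - 2d}$, so that the associated effective speed $L^{2d+1-\ga}$ vanishes as $L \to \infty$ precisely when $\ga > 2d+1$. To formalize this I would run a renormalization argument on cubes of side $M$. Declare a cube $B$ of side $M$ to be \emph{good} if (I) the nearest-neighbor FPP time across $B$ exceeds $cM$ (automatic with high probability for $M$ large by Cox--Durrett), and (II) the minimum passage time over edges of length exceeding $M$ incident to $B$ exceeds $cM$. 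The total Poisson rate of such long edges incident to $B$ is $\sum_{\vx \in B}\sum_{\lone{\vy-\vx}>M} r(\lone{\vy-\vx}) \asymp M^d \cdot M^{d-\ga}L(M) = M^{2d-\ga}L(M)$, so the minimum of the associated independent exponentials is at least of order $M^{\ga-2d}/L(M)$, which exceeds $cM$ for $M$ large whenever $\ga > 2d+1$. A standard block-percolation/Peierls argument on the good cubes (whose density tends to $1$ as $M\to\infty$) then shows that every path from $\vzero$ to $n\vx$ accumulates passage time at least $c'n\lone{\vx}$ with probability tending to $1$, yielding $\nu(\vx) \ge c\lone{\vx} > 0$.

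Once the bounds $c\lone{\vx} \le \nu(\vx) \le C\lone{\vx}$ are established, stage (c) follows by routine arguments. Subadditivity $\nu(\vx+\vy) \le \nu(\vx)+\nu(\vy)$ and integer homogeneity $\nu(k\vx) = k\nu(\vx)$ extend $\nu$ homogeneously to $\dQ^d$, and the Lipschitz bound $|\nu(\vx)-\nu(\vy)| \le C\lone{\vx-\vy}$ inherited from stage (a) yields a continuous extension $\boldsymbol\nu: \dR^d \to [0,\infty)$ with $\boldsymbol\nu(\vy) > 0$ whenever $\vy \ne \vzero$. The uniform shape theorem is then obtained by approximating the unit sphere by a finite $\delta$-net $\vy_1, \ldots, \vy_N \in \dQ^d$, applying the pointwise convergence at each $\vy_i$, and interpolating using Lipschitz continuity of $\boldsymbol\nu$ together with the $L^1$-bound on $T(\vzero, \vx)/\lone{\vx}$.
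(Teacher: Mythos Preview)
Your stages (a) and (c) are essentially the paper's approach and are fine. The serious gap is in stage (b).

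Your conditions (I) and (II) do \emph{not} imply that the LRFPP passage time across a good cube $B$ of side $M$ is at least $cM$, which is what the Peierls argument needs. Condition (I) bounds only the nearest-neighbor FPP time across $B$; since $T \le T_{\mathrm{NN}}$ always, this gives no lower bound on $T$. Condition (II) controls only edges of length exceeding $M$; edges of length between $2$ and $M$---which can cross $B$ in a single step or a few steps---are completely uncontrolled by either condition. So a path could traverse a ``good'' cube cheaply using medium-range edges, and nothing in your setup rules this out.

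The natural repair, replacing (I) by ``the LRFPP time across $B$ exceeds $cM$,'' is circular: that is exactly the linear lower bound you are trying to prove, at scale $M$. The paper's resolution (Proposition~\ref{induction}) is a genuine multi-scale induction on scales $\ell_m = \ell^{1/\theta^m}$ with $\theta<1$ close to $1$. At stage $m+1$ one tiles the box of side $\ell_{m+1}^{1+\kappa}$ by sub-boxes of side $\ell_m^{1+\kappa}$; the no-big-jump estimate (your condition (II), formalized in Lemma~\ref{nobigjump}) forces any optimal path to move through neighboring sub-boxes, and the \emph{induction hypothesis} at scale $\ell_m$ provides the linear lower bound within each good sub-box. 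A Peierls-type count controls the small number of bad sub-boxes. The base case is handled by the crude polynomial bound of Proposition~\ref{Tgalwbd}, which is far from linear but suffices to start the recursion. The bookkeeping (Lemma~\ref{piprop}) of how path segments interact with the tiling and its shifted copies is nontrivial and is where most of the work lies. A single fixed scale $M$ cannot substitute for this bootstrapping.
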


Note that when the communication rate function $r(\cdot)$ satisfies \eqref{def:r} with $\ga=2d+1$, then comparing with the growth for other values of $\ga$ and using the monotonicity property of the LRFPP model (see Remark
\ref{monotone}) it is easy to see that $T(\vzero,\vx)$ grows like $\norm{\vx}^{1+o(1)}$ as $\norm{\vx}\to\infty$. However, our current techniques do not yield the exact growth rate in this case. Based on the results from \cite{gm08}, we believe the condition
$
    \textstyle \int_{1}^{\infty}\bigl(\int_{t}^{\infty}x^{d}r(x)dx\bigr)^{1/d}dt<\infty
$
is sufficient and the condition $\int_{1}^{\infty}x^{2d}r(x)dx<\infty$ is necessary to ensure linear growth for the LRFPP metric with communication rate function $r(\cdot)$. However, here we have not pursued the problem of finding necessary and sufficient conditions that will imply  linear growth.

\begin{rem}
If the communication rate function $r(\cdot)$ satisfies
\[ \lim_{k\to\infty} \frac{\log r(k)}{\log k} = -\infty,\]
then combining the monotonicity property of the LRFPP model (see Remark
\ref{monotone}) together with Proposition \ref{induction} below, it is easy to see that the conclusion of Proposition \ref{induction}, and hence that of Theorem \ref{thm:lg}, also hold for $r(\cdot)$.
\end{rem}

\begin{rem}\label{rem:pexp}
 Our proofs for the above theorems can be extended to the case where  the common distribution of $\{\go_e\}_{e\in\sE}$ is a positive  power $\gc$ of the exponential distribution. In that case, similar  phase transitions occur for the associated LRFPP metric, but the phase transition points are $d\gc, 2d\gc$ and $2d\gc+1$ instead  of $d, 2d$ and $2d+1$ respectively. Also, the corresponding growth exponents for the  stretched exponential and superlinear growth regimes are $\gD(\ga, d\gc)$ and $\Gamma(\ga, d\gc)$. 
\end{rem}
\subsection{Heuristics behind the thresholds}\label{subsec:heu}

In this section, we provide an intuitive explanation for the existence of different phase transition points. Comparing with the `$\ga=\infty$ case' (the nearest-neighbor FPP model), it is easy to see that the growth of the LRFPP balls are always linear or faster than linear. For simplicity, assume that $r(k)=k^{-\ga}, k\ge 1,$ for some $\ga>0$.
Note that when $\ga<d$, $\sum_{\vx\in\dZ^d} r(\lone{\vx})=\infty$ and hence $\min_{\vx\in\dZ^d} W_{\la \vzero,\vx \ra}=0$ a.s. Thus $|\cB_t|=\infty$ a.s.~for any $t>0$.

Now suppose that the growth of the LRFPP ball is polynomial for some $\ga$ with growth exponent $\gb=\gb_{\ga}$ in the sense that the Euclidean diameter of the occupied set $\cB_t=\cB^{(\ga)}_{t}$ at time $t$ is of order $t^{\gb}$. Clearly we must have $\gb\ge 1$.
Then, the size of $\cB_t$ at time $t$ is of order $t^{d\gb}$, so the minimum weight among all edges which have one end in $\cB_t$ and have length more than $\ell$ is exponential with rate approximately of order $t^{d\gb}\ell^{-(\ga-d)}$.
Now note that if $\ell\gg t^{\gb}$, then this minimum weight edge must have weight more than $O(t)$ w.h.p., otherwise the growth of the LRFPP ball will be faster than $O(t^{\gb})$.
Thus, using the fact that $\pr(X\ge t)=e^{-\gl t}$ when $X$ is exponentially distributed with rate $\gl$,
 we must have $t\cdot t^{d\gb}\cdot t^{-(\gb+\eps)(\ga-d)}=o(1)$ as $t\to\infty$ for any $\eps>0$, which implies $\gb(\ga-2d)\ge 1$.

Clearly for  $\ga\le 2d$, the above heuristic calculation does not hold (as $\ga-2d\le 0$), which implies that the growth is faster than any polynomial.
In fact, comparing with a LRP model, in which an edge $e \in \sE$ is present with probability $1-\exp(-\lone{e}^{-\ga})$ it is easy to see \cite{B04a} that the growth is at least stretched exponential.
Thus, one expects a transition from stretched exponential to polynomial growth as $\ga$ changes from smaller than $2d$ to larger than $2d$.

Now, for $\ga\in (2d,2d+1]$ we have $(\ga-2d)^{-1}\ge 1$, so the growth exponent $\gb$ for the diameter of the LRFPP ball is bigger than $(\ga-2d)^{-1}$.
Also, for $\gb:=(\ga-2d)^{-1}$ intuitively the growth of the diameter cannot be faster than $t^{\gb+\eps}$ for any $\eps>0$, as eventually by time $t$ all  ``usable'' edges will have Euclidean length $\ll t^{\gb}$. Thus, for $\ga\in (2d,2d+1]$ the growth exponent for the diameter of the LRFPP ball must be $(\ga-2d)^{-1}$.

Now note that $(\ga-2d)^{-1}=1$ when $\ga=2d+1$, so by monotonicity one expects the growth exponent for the diameter to be $\le 1$ when $\ga>2d+1$. So, the linear growth dominates in this case. Moreover, it is easy to see that for any $\ell \gg t^{(d+1)/(\ga-d)}$ the minimum weight among all edges which have one end in $\cB_t$ (which has linear growth) and have Euclidean length $\ell$ is larger than $O(t)$ w.h.p., and thus up to time $O(t)$ none of the edges having Euclidean length more than $O(t^{(d+1)/(\ga-d)})$ will be used. Moreover, $\theta:=(d+1)/(\ga-d)\in(0,1)$ when $\ga>2d+1$. This idea will play a crucial role in proving linear growth. If we break the lattice into boxes of length $n^{\theta}$ and if the optimal path from $\vzero$ to $n\vx$ cannot jump over boxes, then we have a nearest-neighbor path over the boxes and if the path spends $\Theta(n^{\theta})$ time in most of the boxes, the total time is $\Theta(n)$. We will use a renormalization technique to use this idea in proving the linear growth.

\subsection{Discussion and Open Problems}\label{sec:disc}
As alluded earlier, the long-range percolation model on $\dZ^d$ is
not well understood when the associated LRP graph metric is expected
to scale polynomially with the Euclidean metric. The only available
result in this context \cite{DS13} is the existence of a scaling
exponent in one dimension ensuring polynomial scaling of the LRP
metric. Also, the linear growth of the LRP metric, when relevant, is
not fully established. In this article, we have been able to
elucidate those two growth regimes (polynomial and linear) in case
of a class of long-range first-passage percolation model, which can
be thought of as a continuous analogue of the LRP model, in addition
to identifying and analyzing other growth regimes for it. For our
model, we have proved linear growth for the associated LRFPP metric
along with a {\it shape theorem} for the growth set in case of
almost all candidate communication rate functions. We have also pinned
down the growth exponent for all communication rate functions which
correspond to polynomial growth for the occupied set.

In our LRFPP model, all edges-weights are exponentially distributed
(or some power of it). So, a natural question arises: what happens
if we replace exponential distribution by an arbitrary distribution
supported on $[0,\infty)$. In many places in this article we have
used properties of the exponential distribution to facilitate our
calculations. However, the crucial fact that will imply a similar
phase transition is that the distribution of $\go_{e}$'s satisfy
$\pr(\go_{e}\le x)=\Theta(x)$ for $x\ll 1$. In general, when
$\pr(\go_{e}\le x)=\Theta(x^{s})$ for $x\ll 1$ and for some real
number $s>0$,  the phase transition points will be $d/s, 2d/s,
2d/s+1$ respectively under appropriate moment conditions. In a sense,
the ``effective'' dimension becomes $d/s$ instead of $d$ in that case. Note that
when $(\go_e)^{s}$ has exponential distribution with rate one, it is
easy to see that $\pr(\go_{e}\le x)\approx x^{s}$ and one can go
through almost all the computations in this article to see the above
phenomenon (see Remark~\ref{rem:pexp}). The general case will be dealt with in a forthcoming
article.

\begin{figure}[htbfp]
    \centering
    \includegraphics[height=5cm,page=1]{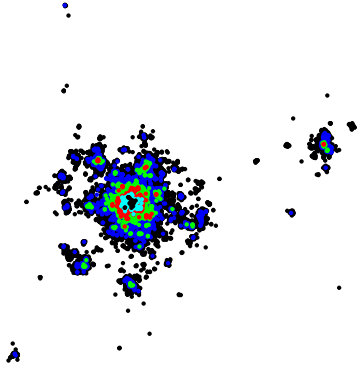}
    \caption{Simulated growth for $r(k)=k^{-3.5}$ in $d=2$ upto time $t=24$ at volume $25421$. Different colors show the growth pattern at $6$ equispaced time points.}
    \label{fig:sim3.5}
\end{figure}

\begin{figure}[hbtfp]
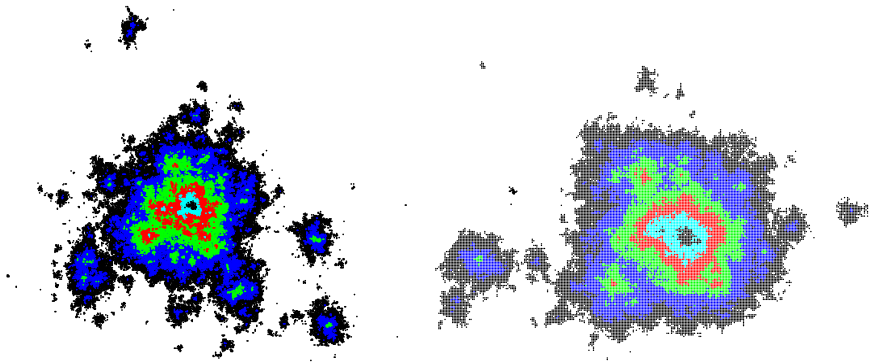

    \centering
     \includegraphics[height=5cm,page=2]{sim.pdf}\hspace*{-2cm}\includegraphics[height=4cm,page=3]{sim.pdf}
    \caption{Simulated growth for $r(k)=k^{-4}$(left) and $r(k)=k^{-4.5}$(right) upto time $t=48$ and $t=60$ with volume $46113$ and $19635$, respectively. Different colors show the growth pattern at $6$ equispaced time points.}
    \label{fig:sim4}
\end{figure}

\begin{figure}[hbtfp]
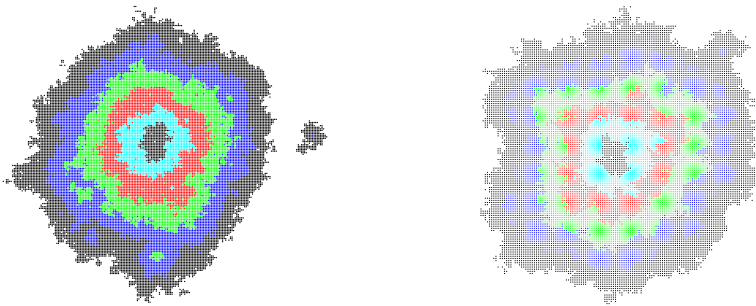

    \centering
     \includegraphics[height=4cm,page=4]{sim.pdf}\hspace*{2cm}\includegraphics[height=4cm,page=5]{sim.pdf}
    \caption{Simulated growth for $r(k)=k^{-5}$(left) and $r(k)=k^{-5.5}$(right) upto time $t=90$, with volume $19534$ and $12911$, respectively. Different colors show the growth pattern at $6$ equispaced time points.}
    \label{fig:sim5}
\end{figure}

Even for the model that we consider here, there are many fascinating phenomenon that we have not analyzed yet.  We mention some of them below. See Figures~\ref{fig:sim3.5}, \ref{fig:sim4} and \ref{fig:sim5} for simulated pictures of the random growth set in two dimension.

\begin{enumeratea}
    \item What is the limiting distribution of $T(\vzero,n\vx)$ as $n\to\infty$ when $\ga\in(d,2d)$? Heuristically the limit should be Gaussian at least when $\ga$ is close to $2d$.
    \item In the stretched exponential growth regime $\ga\in(d,2d)$, is it possible to formulate and analyze the  boundary behavior of the growth set?
    \item For $\ga\in(2d,2d+1)$, one should have a random `shape theorem'. More precisely, for any fixed direction $\vx\in\dR^{d}$ the ratio $T(\vzero,\lfloor n\vx\rfloor)/n^{\ga-2d}$ should converge to a random variable.
    \item How does $T(\vzero,n\vx)$ grow as $n\to\infty$ when $r(k)=k^{-(2d+1)}$? We believe the answer is $\Theta(n(\log n)^{-\theta})$ for some $\theta>0$.
    \item In the linear growth regime, does the fluctuation of the first-passage time have a phase transition too or the fluctuation is universal?
    \item From simulation results, it is obvious that for $\ga>2d$ there is a single large connected (in $\dZ^{d}$) cluster for the growth set, however for $\ga<2d$ there are many of them. Is it possible to analyze the number of ``big'' components in the growth set $\cB_{t}$?
\end{enumeratea}

\subsection{Organization of the paper}

The paper is organized as follows. In Section~\ref{sec:est} we set up our notations and prove the  technical estimates needed later in the proofs.  Section~\ref{sec:inst} contains the proof of Theorem~\ref{thm:ip} about the transition from instantaneous growth to subexponential growth. In Sections~\ref{sec:msa} and~\ref{sec:sbineq} we develop a Multi-scale analysis and Self bounding recursion for the expected volume that will be used crucially to find appropriate lower and upper bounds for the growth set at time $t$. Finally we prove the main Theorems~\ref{thm:seg}~--~\ref{thm:crit2d},~\ref{thm:slg} and \ref{thm:lg} in Sections~\ref{sec:seg},~\ref{sec:slg} and \ref{sec:lg} respectively.

\section{Notations and Estimates}
\label{sec:est}

Recall that $\dG^d=(\dZ^d, \sE)$ denotes the infinite complete graph on the vertex set $\dZ^d$ and edge set $\sE:=\{\la\vx\vy\ra: \vx, \vy \in \dZ^d, \vx \ne \vy\}$. Also $\{\go_e\}_{e\in\sE}$ is a collection of \iid exponentially distributed random variables with mean one, and the passage time for the edge $e=\la\vx\vy\ra \in \sE$ is $W_e=\go_e/r(\lone{e})$, where $\lone{e}=\lone{\vx-\vy}$. Recall that $r$ satisfies \eqref{def:r}, \ie $r$  is non-increasing and is of the form
\begin{align*}
    r(k)=k^{-\ga}L(k), k\ge 1
\end{align*}
for some $\ga>0$ and  a slowly varying function $L(\cdot)$ with $L(1)=1$.

For a finite $\sE$-path $\pi$, the passage time is defined as $W_\pi := \sum_{e\in\pi} W_e$ and the first-passage metric on $\dZ^d$ is
\begin{align*}
      & T(\vx,\vy)  := \inf_{\pi\in\cP_{\vx,\vy}} W_\pi,                                                                     \\
      & \text{where } \cP_{\vx,\vy}  := \{\la\vx_0\vx_1\ldots \vx_k\ra: \vx_0=\vx, \vx_k=\vy, \vx_i \ne \vx_{i-1}, i=1,2,\ldots,k\}.
\end{align*}
$\cB_t$ denotes the ball of radius $t$ around the origin for the random metric $T(\cdot,\cdot)$, and $D_t$ denotes the diameter of that ball.

The following tail estimate for sums of exponential random variables will be useful in our analysis.

\begin{lem}\label{lem:expldp}
    Let $X_{1},X_{2},\ldots$ be i.i.d.~exponential random variables with mean $1$. Let $\gl_{1},\gl_{2},\ldots$ be a sequence of positive real numbers. Then for any $t\ge  0$ and $k\ge  1$ we have
    \begin{align*}
        \frac{t^k}{\prod_{i=1}^k (k+\gl_{i} t)} \prod_{i=1}^k \gl_{i} \le
        \pr\left(\sum_{i=1}^k X_{i}/\gl_{i} \le  t\right) \le
        \left(\frac{et}{k}\right)^k \prod_{i=1}^k\gl_{i}.
    \end{align*}
    Moreover, if $\gl_{i}\ge  \gl$ for all $i$ and $\gL:=\sum_{i=1}^k1/\gl_{i}$, then for any $t\ge  \gL$ we have
    \begin{align*}
        \pr\left(\sum_{i=1}^k X_{i}/\gl_{i} \ge  t\right) \le  \exp\left(-\frac{\gl(t-\gL)^2}{2t}\right).
    \end{align*}
\end{lem}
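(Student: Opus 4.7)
The lemma splits into three inequalities, each of which succumbs to a short Chernoff-style argument plus an elementary analytic inequality.

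For the upper bound on $\pr(\sum X_i/\gl_i \le t)$, the plan is to apply the standard exponential Markov trick in the negative direction: for any $\gth>0$,
\[
  \pr\Bigl(\sum_{i=1}^k X_i/\gl_i \le t\Bigr) \le e^{\gth t}\prod_{i=1}^k \E\bigl[e^{-\gth X_i/\gl_i}\bigr] = e^{\gth t}\prod_{i=1}^k \frac{\gl_i}{\gl_i+\gth} \le \frac{e^{\gth t}}{\gth^k}\prod_{i=1}^k \gl_i.
\]
Optimizing in $\gth$ with the choice $\gth = k/t$ immediately yields the claimed $(et/k)^k \prod \gl_i$ bound.

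For the matching lower bound, I would not try to compute the density of $\sum X_i/\gl_i$ (which is a messy sum of partial fractions when the $\gl_i$'s are distinct); instead, I would use the trivial inclusion
\[
  \Bigl\{X_i/\gl_i \le t/k \text{ for all } i=1,\ldots,k\Bigr\} \subseteq \Bigl\{\sum X_i/\gl_i \le t\Bigr\}
\]
and independence to write
\[
  \pr\Bigl(\sum X_i/\gl_i \le t\Bigr) \ge \prod_{i=1}^k \bigl(1-e^{-\gl_i t/k}\bigr).
\]
The elementary inequality $1-e^{-x} \ge x/(1+x)$ for $x\ge 0$ (equivalent to $e^x\ge 1+x$) then gives $\prod \bigl(\gl_i t/(k+\gl_i t)\bigr) = t^k\prod\gl_i/\prod(k+\gl_i t)$, as required.

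For the final tail bound on $\pr(\sum X_i/\gl_i \ge t)$ under $\gl_i\ge\gl$, I would again use Chernoff, this time in the positive direction, with $\gth\in(0,\gl)$:
\[
  \pr\Bigl(\sum X_i/\gl_i \ge t\Bigr) \le \exp\Bigl(-\gth t - \sum_{i=1}^k \log(1-\gth/\gl_i)\Bigr) = \exp\Bigl(-\gth(t-\gL) + \sum_{i=1}^k \bigl[-\gth/\gl_i - \log(1-\gth/\gl_i)\bigr]\Bigr).
\]
The key analytic input is the inequality $-u-\log(1-u) \le u^2/(2(1-u))$ for $u\in[0,1)$, which applied with $u=\gth/\gl_i$ and combined with $\sum 1/\gl_i^2 \le \gL/\gl$ bounds the last sum by $\gth^2\gL/(2\gl(1-\gth/\gl))$. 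The only remaining task is to choose $\gth$ well: taking $\gth = \gl(t-\gL)/t$ makes $1-\gth/\gl = \gL/t$, so the exponent collapses to $-\gl(t-\gL)^2/(2t)$ after one line of algebra. The mild obstacle is simply spotting this choice of $\gth$; once chosen, the computation is mechanical and gives the stated sub-Gaussian bound.
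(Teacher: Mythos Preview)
Your proof is correct and for the first two inequalities it is line-for-line the paper's argument: Chernoff with $\gth=k/t$ for the upper bound, and the inclusion $\{X_i\le \gl_i t/k\ \forall i\}$ together with $1-e^{-x}\ge x/(1+x)$ for the lower bound.

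For the upper tail, both you and the paper run the same Chernoff bound and plug in the same optimizer (your $\gth=\gl(t-\gL)/t$ is the paper's $\theta\gl$ with $\theta=1-\gL/t$), but the intermediate simplification differs. The paper uses monotonicity of $-\log(1-x)/x$ to collapse $\sum_i -\log(1-\theta\gl/\gl_i)$ to $-\gL\log(1-\theta)$ in one step, reducing to the single-variable inequality $1-x+x\log x\ge(1-x)^2/2$. You instead bound each term via $-u-\log(1-u)\le u^2/(2(1-u))$ and then use $\sum\gl_i^{-2}\le\gL/\gl$. Both routes are clean and give exactly the stated bound; the paper's has the minor advantage of needing only $\gL$ and not the extra second-moment estimate, while yours avoids the monotonicity lemma.
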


\begin{proof}
Using exponential Markov inequality and the fact that
\begin{align}\label{eq:mgf}
    \E(e^{-\theta X_i}) = 1/(1+\theta), \text{ for } \theta>-1
\end{align}
we have
\begin{align*}
    \pr\left(\sum_{i=1}^k X_{i}/\gl_{i} \le  t\right) \le  e^{\theta t}
    \prod_{i=1}^k (1+\theta/\gl_{i})^{-1}\le  e^{\theta t}
    \prod_{i=1}^k \frac{\gl_i}{\theta}
\end{align*}
for all $\theta>0$. Taking $\theta=k/t$ we get the required upper bound for the lower tail of $\sum_{i=1}^k X_i/\gl_i$. For the lower bound we use independent events. Clearly
\begin{align*}
    \pr\left(\sum_{i=1}^k X_{i}/\gl_{i} \le  t\right)
      & \ge  \pr(X_{i}\le  \gl_{i}t/k \text{ for all } i) \\
      & = \prod_{i=1}^k (1-e^{-\gl_i t/k})
    \ge  \prod_{i=1}^k \frac{\gl_it}{k+\gl_it},
\end{align*}
where the last inequality follows from the fact that $1-e^{-x}\ge  x/(1+x)$ for all $x\ge  0$.

For the upper tail bound we use the Markov inequality and \eqref{eq:mgf} to have
\begin{align*}
    \pr\left(\sum_{i=1}^k X_{i}/\gl_{i} \ge  t\right) \le e^{-\theta\gl t}
    \prod_{i=1}^k (1-\theta\gl/\gl_{i})^{-1}
\end{align*}
for all $\theta\in[0,1)$. Also using the monotonicity of the function $-\log(1-x)/x$, we have
\begin{align*}
    \log \pr\left(\sum_{i=1}^k X_{i}/\gl_{i} \ge  t\right) \le  -\theta \gl t -\sum_{i=1}^k \frac{\gl}{\gl_i}\log(1-\theta)
    =-\gl(\theta t + \gL\log(1-\theta)).
\end{align*}
Taking $\theta=1-\gL/t$ and using the fact that $1-x+x\log x \ge  (1-x)^{2}/2$ for all $x\in[0,1]$ we finally have
\begin{align*}
    \log \pr\left(\sum_{i=1}^k X_{i}/\gl_{i} \ge  t\right) \le  -\frac{\gl t (1-\gL/t)^2}{2}= -\frac{\gl(t-\gL)^2}{2t}.
\end{align*}
This completes the proof.
\end{proof}

\begin{lem}\label{lem:jointprob}
    Let $X_{1},X_{2},\ldots$ be i.i.d.~exponential random variables with mean $1$. Let $\gl_{1},\gl_{2},\ldots$ be a sequence of positive real numbers. Then for any $t\ge  0,k> m\ge   0$, we have
    \begin{align*}
        \pr\left(\sum_{i=1}^k X_{i}/\gl_{i} \le  t, \sum_{i=1}^m
        X_{i}/\gl_{i} + \sum_{i=k+1}^{2k -m }X_{i}/\gl_{i}\le  t\right) \le
        \frac{(et)^{2k - m}}{(k - m)^{2k -2m}m^m} \prod_{i=1}^{2k - m}\gl_{i}.
    \end{align*}
\end{lem}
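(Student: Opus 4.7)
The key structural observation is that the two constraint sums share a common prefix: writing $A := \sum_{i=1}^{m} X_{i}/\gl_{i}$, $B := \sum_{i=m+1}^{k} X_{i}/\gl_{i}$, and $C := \sum_{i=k+1}^{2k-m} X_{i}/\gl_{i}$, the event under consideration is exactly $\{A+B \le t,\ A+C \le t\}$, where $A$, $B$, and $C$ are mutually independent by disjointness of the index sets. The plan is to condition on $A$, decouple $B$ and $C$, and then apply the already-established upper bound from Lemma~\ref{lem:expldp} to each piece.

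More concretely, I would proceed as follows. First, conditioning on $A=a$ and using independence of $B$ and $C$,
\begin{align*}
    \pr(A+B \le t,\ A+C \le t)
    = \E\bigl[\,\ind_{\{A \le t\}}\,\pr(B \le t-A \mid A)\,\pr(C \le t-A \mid A)\,\bigr].
\end{align*}
Second, since $B$ is a sum of $k-m$ independent scaled exponentials, the upper bound in Lemma~\ref{lem:expldp} gives
\begin{align*}
    \pr(B \le t-a) \le \Bigl(\tfrac{e(t-a)}{k-m}\Bigr)^{k-m}\prod_{i=m+1}^{k}\gl_{i}
\end{align*}
for $0 \le a \le t$, and the analogous bound holds for $\pr(C \le t-a)$ with the product ranging over $i \in \{k+1,\dots,2k-m\}$. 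Substituting and using the crude estimate $(t-a)^{2(k-m)} \le t^{2(k-m)}$ to pull the $a$-dependence out of the expectation, one obtains
\begin{align*}
    \pr(A+B \le t,\ A+C \le t) \le \Bigl(\tfrac{e}{k-m}\Bigr)^{2(k-m)} t^{2(k-m)}\,\pr(A \le t)\prod_{i=m+1}^{2k-m}\gl_{i}.
\end{align*}

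Third, applying Lemma~\ref{lem:expldp} one more time yields $\pr(A \le t) \le (et/m)^{m}\prod_{i=1}^{m}\gl_{i}$ (with the usual convention $0^{0}=1$ and the empty product equal to $1$, which also handles the degenerate case $m=0$ in which $A \equiv 0$). Multiplying the three factors, the exponents of $e$ combine to $2(k-m)+m = 2k-m$, the power of $t$ combines to $t^{2k-m}$, and the denominator collects to $(k-m)^{2(k-m)} m^{m}$, giving exactly the claimed bound. I do not expect a real obstacle here: the only minor subtlety is that replacing $(t-a)^{2(k-m)}$ with $t^{2(k-m)}$ is lossy, so the resulting bound is not optimal (a full Chernoff-type optimisation over two parameters gives something sharper), but the clean factorised form above is precisely what the lemma asserts and is what is actually needed downstream.
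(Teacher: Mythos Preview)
Your proof is correct and is essentially the same as the paper's: you decompose into the independent blocks $A,B,C$ and apply Lemma~\ref{lem:expldp} to each. The paper is marginally more direct, using the inclusion $\{A+B\le t,\ A+C\le t\}\subseteq\{A\le t\}\cap\{B\le t\}\cap\{C\le t\}$ and then factorising by independence, which is exactly what your conditioning-then-replacing-$(t-a)$-by-$t$ argument amounts to.
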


\begin{proof}[Proof of Lemma~\ref{lem:jointprob}]
It is easy to see that
\begin{align*}
      & \pr\left( \sum_{i=1}^k \frac{X_i}{\gl_i} \le  t, \sum_{i=1}^m \frac{X_i}{\gl_i} + \sum_{i=k+1}^{2k - m}\frac{X_i}{\gl_i} \le  t \right)                                      \\
      & \le   \pr\left(\sum_{i=1}^m \frac{X_i}{\gl_i} \le  t, \sum_{i=k+1}^{2k-m}\frac{X_i}{\gl_i}\le  t, \sum_{i=m+1}^k \frac{X_i}{\gl_i} \le  t\right)                           \\
      & = \pr\left(\sum_{i=1}^m \frac{X_i}{\gl_i} \le  t\right) \pr\left(\sum_{i=k+1}^{2k-m}\frac{X_i}{\gl_i}\le  t\right) \pr\left(\sum_{i=m+1}^k \frac{X_i}{\gl_i} \le  t\right).
\end{align*}
Applying Lemma \ref{lem:expldp} to bound each of the above terms we get the desired inequality.
\end{proof}

The inequalities in Lemma \ref{lem:expldp} and \ref{lem:jointprob} clearly suggests that the behavior of the tail probabilities for the passage time of a finite $\sE$-path $\pi$ depends on $\prod_{e\in\pi} r(\lone{e})$ (which corresponds to the term $\prod_i \gl_i$ in the two lemmas). So analyzing this quantity for certain collection of paths is important in order to understand the growth of the first-passage metric. Keeping that in mind, we now estimate the following

For any positive integer $k\ge  1$ and $\vx,\vy\in\dZ^{d}$, let
\begin{align}\label{cP_k}
    \begin{split}
    \cP_{k}(\vx,\vy) & \text{ be the set of all finite $\sE$-paths}              \\
                     & \text{ of length (no.~of edges) $k$ from $\vx$ to $\vy$}.
    \end{split}
\end{align}
We define
\begin{align}
    \sS^r_k(\vx,\vy):= \sum_{\pi\in \cP_{k}(\vx,\vy)}\prod_{e\in \pi}r(\lone{e}).
    \label{sS}
\end{align}
In order to estimate the growth of $\sS^r_k$, first we need the following bound.

\begin{lem}\label{lem:gsumbd}
    Let $r(\cdot), q(\cdot)$ be non-increasing functions on $\dN\to(0,\infty)$ satisfying
    \begin{align}\label{eq:rat}
        \sup_{x\ge 1}\frac{x\abs{r(x+1)-r(x)}}{r(x)}\le c,\ \sup_{x\ge 1}\frac{x\abs{q(x+1)-q(x)}}{q(x)}\le c,
    \end{align}
    for some constant $c>0$. Then, for any $\vx \in \dZ^d$ we have
    \begin{align*}
        \sum_{\vy\neq \vzero,\vx}  r(\lone{\vx-\vy}) q(\lone{\vy})
          & \le  a\biggl( r(\lone{\vx})\int_{1}^{\lone{\vx}}x^{d-1}q(x)dx \\
          & \quad + q(\lone{\vx})\int_{1}^{\lone{\vx}}x^{d-1}r(x)dx
        + \int_{\lone{\vx}}^{\infty} x^{d-1}r(x)q(x)dx
        \biggr)
    \end{align*}
    for some constant $a<\infty$ depending only on $c,d$.
\end{lem}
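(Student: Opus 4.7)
The plan is to cover $\dZ^{d}\setminus\{\vzero,\vx\}$ by three (overlapping) regions based on how $\lone{\vy}$ and $\lone{\vx-\vy}$ compare to $\lone{\vx}/2$, and to bound the sum over each region by one of the three integrals on the right-hand side. Condition \eqref{eq:rat} will enter only through the following consequence: $r$ and $q$ are comparable over geometrically bounded ratios of the argument.

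First I would deduce from \eqref{eq:rat} and monotonicity that $r(k+1)\ge r(k)(1-c/k)$, and then show there exists a constant $K=K(c)$ such that $r(x)\le K\,r(y)$ whenever $1\le x\le y\le 2x$, and similarly for $q$. Telescoping gives, for $x$ larger than some constant, $r(x)/r(y)\le \prod_{k=x}^{y-1}(1-c/k)^{-1}\le \exp\!\bigl(2c\sum_{k=x}^{y-1} 1/k\bigr)\le (y/x)^{2c}$, which is bounded uniformly when $y\le 2x$; values of $x$ below the cutoff only contribute an additional multiplicative constant.

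Next, define $R_{1}=\{1\le\lone{\vy}\le\lone{\vx}/2\}$, $R_{2}=\{1\le\lone{\vx-\vy}\le\lone{\vx}/2\}$, and $R_{3}=\{\min(\lone{\vy},\lone{\vx-\vy})\ge\lone{\vx}/2\}$; these cover every $\vy\ne\vzero,\vx$. On $R_{1}$ the triangle inequality gives $\lone{\vx-\vy}\ge \lone{\vx}/2$, so $r(\lone{\vx-\vy})\le K\,r(\lone{\vx})$ by Step 1. Grouping lattice points by their $\ell_{1}$-norm (there are $O(k^{d-1})$ of them at distance $k$) and using that $q$ is non-increasing, I would bound $\sum_{\vy\in R_{1}}q(\lone{\vy})$ by $O(\int_{1}^{\lone{\vx}} x^{d-1}q(x)\,dx)$ via the standard sum-integral comparison, producing the first term. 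The sum over $R_{2}$ is handled symmetrically to produce the second term.

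Finally, I would split $R_{3}=R_{3}'\cup R_{3}''$, where $R_{3}'=R_{3}\cap\{\lone{\vy}\le 2\lone{\vx}\}$ and $R_{3}''=R_{3}\cap\{\lone{\vy}>2\lone{\vx}\}$. On $R_{3}'$ Step 1 gives both $r(\lone{\vx-\vy})\le K\,r(\lone{\vx})$ and $q(\lone{\vy})\le K\,q(\lone{\vx})$, while $|R_{3}'|=O(\lone{\vx}^{d})$; the resulting $O(\lone{\vx}^{d}r(\lone{\vx})q(\lone{\vx}))$ is absorbed into the first term because monotonicity of $q$ yields $\int_{\lone{\vx}/2}^{\lone{\vx}} x^{d-1}q(x)\,dx\ge c_{d}\lone{\vx}^{d}q(\lone{\vx})$. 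On $R_{3}''$ the triangle inequality forces $\lone{\vx-\vy}\ge \lone{\vy}/2$, hence $r(\lone{\vx-\vy})\le K\,r(\lone{\vy})$ by Step 1, so the contribution is at most $K\sum_{\lone{\vy}>2\lone{\vx}} r(\lone{\vy})q(\lone{\vy})= O\!\bigl(\int_{\lone{\vx}}^{\infty} x^{d-1}r(x)q(x)\,dx\bigr)$, which is the third term. The only real obstacle is careful bookkeeping of the constants across the three regions, since all estimates reduce to monotonicity plus the comparability ratio obtained in Step 1; no delicate analytic input is required.
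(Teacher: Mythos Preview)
Your argument is correct and follows essentially the same three-region decomposition as the paper, the only cosmetic difference being that the paper works with $\ell_2$--balls centered at $\vzero$, $\vx$, and $\vx/2$ (so that the ``far'' region $A_3$ is handled in one stroke via the distance from $\vx/2$) whereas you use $\ell_1$--conditions directly and split $R_3$ into a bounded annulus and a tail. Both routes rest on the same two ingredients---the doubling-type comparability of $r,q$ derived from \eqref{eq:rat}, and the sum-to-integral comparison over $\ell_1$--spheres---so there is no substantive difference.
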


\begin{proof}
Let $m:=\frac{1}{4}\norm{\vx}_2$. Here $\norm{\vx}_{2}=(\sum x_i^2)^{1/2}$ is the $\ell_{2}$--norm and $\ballt{\vx,r}=\{\vy: \norm{\vx-\vy}_2\le t\}$ is the $\ell_{2}$--ball of radius $t$ centered at $\vx$. Note that $d^{-1/2}\lone{\vx}\le \norm{\vx}_{2}\le \lone{\vx}$.
Define the sets
\begin{align*}
    A_1 :=\ballt{\vzero, 3m} \setminus \{\vzero\},\
    A_2 :=\ballt{\vx, 3m} \setminus \{\vx\}
    \text{ and }
    A_3 & :=\ballt{\vx/2,\sqrt{5} m}^c.
\end{align*}

\begin{figure}[hbt]
    \centering
    \includegraphics[width=1.5in]{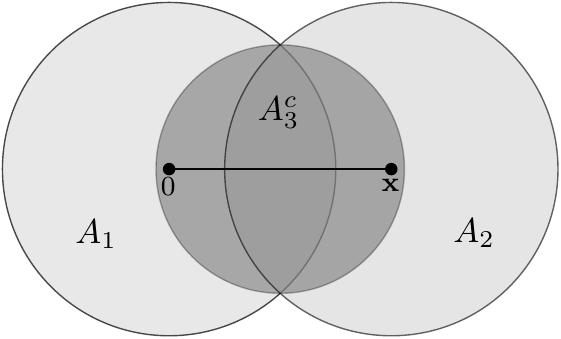}
    \caption{Decomposition of $\dZ^{d}\setminus\{\vzero,\vx\}$ into $A_{i},i=1,2,3$. }
\end{figure}

It is easy to see that $\bigcup_{i=1}^3 A_i = \dZ^d \setminus \{\vzero,\vx\}$, as the
distance between $\vx/2$ and any vertex outside $A_1\cup A_2$ is at least $\sqrt{(3/4)^2-(1/2)^2}\norm{\vx}_2=\sqrt{5} m$. Therefore we have
\[
    \sum_{\vy\neq \vzero,\vx}  r(\lone{\vx-\vy}) q(\lone{\vy})
    \le  \sum_{i=1}^{3}\sum_{\vy\in A_ i} r(\lone{\vx-\vy}) q(\lone{\vy}).
\]
Now $\vy\in A_1$ implies $\lone{\vx-\vy}\ge \norm{\vx-\vy}_2\ge m$.
Moreover the conditions \eqref{eq:rat} imply that $\sup_{k\ge
1}r(ak)/r(k)<\infty, \sup_{k\ge 1}q(ak)/q(k)<\infty$ for all $a>0$.
In particular, $r(\lone{\vx})$ and $r(\norm{\vx}_{2})$ are
equivalent upto constant multiplication.

Thus we have
\begin{align*}
    \sum_{\vy \in A_1} r(\lone{\vx-\vy}) q(\lone{\vy})
      & \le  r(m) \sum_{i=1}^{3m} i^{d-1} q(i)
    \le a r(\lone{\vx})\int_{1}^{\lone{\vx}}x^{d-1}q(x)dx
\end{align*}
for some constant $a>0$. Similarly, we have
\begin{align*}
    \sum_{\vy \in A_2} r(\lone{\vx-\vy}) q(\lone{\vy})
      & \le a q(\lone{\vx})\int_{1}^{\lone{\vx}}x^{d-1}r(x)dx.
\end{align*}
Finally, using triangle inequality we have
\[
    \norm{\vy}_2 \ge  \norm{\vy-\vx/2}_2 - 2m\text{ and  }\norm{\vx-\vy}_2\ge  \norm{\vy-\vx/2}_2 - 2m,
\]
and thus
\begin{align*}
    \sum_{\vy \in A_3} r(\lone{\vx-\vy}) q(\lone{\vy})
      & \le  a'\sum_{s\ge 5m^2} s^{d/2-1} r(\sqrt{s}-2m) q(\sqrt{s}-2m) \\
      & \le  a'' \int_{\lone{\vx}}^{\infty} x^{d-1}r(x)q(x)dx
\end{align*}
for some constant $a''$.
\end{proof}

\begin{cor} \label{cor:sumbd}
    For $\ga, \gb>0$ there exists constant $c>0$ depending on $\ga, \gb$ and $d$ such that for any $\vx \in \dZ^d$
    \begin{enumeratea}
    \item\label{smit1} $\sum_{\vy\neq \vzero,\vx}  \lone{\vy}^{-\gb} \lone{\vx-\vy}^{-\ga} \le  c \norm{\vx}^{d-\gb-\ga}$ if  $0 < \ga, \gb < d$ and $\ga+\gb > d$,

    \item\label{smit2} $\sum_{\vy\neq \vzero,\vx}  \lone{\vy}^{-\ga} \lone{\vx-\vy}^{-\ga} \le  c \norm{\vx}^{-\ga}$ if  $\ga > d$.
    \end{enumeratea}
\end{cor}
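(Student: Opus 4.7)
The plan is to apply Lemma~\ref{lem:gsumbd} directly with $r(x)=x^{-\ga}$ and $q(x)=x^{-\gb}$ (and $q(x)=x^{-\ga}$ for part (b)), and to evaluate the three resulting integrals.

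First I would verify that the power function $x\mapsto x^{-\ga}$ satisfies the regularity hypothesis \eqref{eq:rat}. A direct computation yields
\[
\frac{|(x+1)^{-\ga}-x^{-\ga}|}{x^{-\ga}}=1-(1+1/x)^{-\ga}\le \ga/x\quad\text{for all }x\ge 1,
\]
so the condition holds with $c=\ga$, and likewise for $q$. Hence Lemma~\ref{lem:gsumbd} applies in both settings.

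For part (a), substituting $r(x)=x^{-\ga}$ and $q(x)=x^{-\gb}$, the three terms produced by Lemma~\ref{lem:gsumbd} read, up to universal constants, $\lone{\vx}^{-\ga}\int_1^{\lone{\vx}}x^{d-1-\gb}\,dx$, $\lone{\vx}^{-\gb}\int_1^{\lone{\vx}}x^{d-1-\ga}\,dx$, and $\int_{\lone{\vx}}^{\infty}x^{d-1-\ga-\gb}\,dx$. The hypotheses $\gb<d$, $\ga<d$, and $\ga+\gb>d$ are precisely what is needed to bound the first, second, and third of these integrals by $O(\lone{\vx}^{d-\ga-\gb})$, respectively (the last one converging at infinity thanks to $\ga+\gb>d$). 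Summing and using the equivalence of $\lone{\cdot}$ and $\norm{\cdot}$ on $\dZ^d$ finishes part (a). For part (b) with $\ga>d$, the first two terms both reduce to $\lone{\vx}^{-\ga}\int_1^{\lone{\vx}}x^{d-1-\ga}\,dx$, and since $\ga>d$ the integral is uniformly bounded by $\int_1^{\infty}x^{d-1-\ga}\,dx<\infty$, so each contributes $O(\lone{\vx}^{-\ga})$; the third term is $\int_{\lone{\vx}}^{\infty}x^{d-1-2\ga}\,dx=O(\lone{\vx}^{d-2\ga})$, which is $O(\lone{\vx}^{-\ga})$ since $d-2\ga<-\ga$.

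There is no substantive obstacle here -- the proof is essentially a routine integral estimation once Lemma~\ref{lem:gsumbd} is in hand. The only points worth checking carefully are that the power function meets the discrete derivative bound \eqref{eq:rat} (which is immediate), and that in part (a) each of the three hypotheses $\ga<d$, $\gb<d$, $\ga+\gb>d$ is invoked exactly once, one per integral, reflecting the tightness of the statement.
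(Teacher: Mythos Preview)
Your proposal is correct and matches the paper's intended approach: the corollary is stated without proof precisely because it follows by plugging the power weights $r(x)=x^{-\ga}$, $q(x)=x^{-\gb}$ into Lemma~\ref{lem:gsumbd} and estimating the three resulting integrals, exactly as you do. Your observation that each of the three hypotheses in part~(a) is used for exactly one integral is a nice sanity check.
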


We now use Lemma \ref{lem:gsumbd} to estimate the growth of $\sS^r_k$. We use $\sS^{\ga}_{k}$ when $r(k)=k^{-\ga}$.

\begin{lem}\label{lem:sumprod}
    Let $r(\cdot)$ satisfy \ref{def:r}. For any fixed $k \ge 1$ and any $\vx,\vy\in\dZ^d$,
    \begin{enumeratea}
        \item\label{it:spa} $\ga\le  (k-1)d/k$ implies $\sS^\ga_k(\vx,\vy)=\infty$,
        \item\label{it:spb} $(k-1)d/k<\ga<d$ implies
        \[
            a^{k-1}\lone{\vx-\vy}^{(k-1)d-k\ga}\le  \sS^{\ga}_k(\vx,\vy)\le  b^{k-1}\lone{\vx-\vy}^{(k-1)d-k\ga}
        \]
        for some constant $a,b>0$ depending only on $\ga$ and $d$,
        \item\label{it:spc} $A:=\int_{1}^{\infty}x^{d-1}r(x)dx<\infty$ implies that
        \[
            a^{k-1}r(\lone{\vx-\vy})\le  \sS^r_k(\vx,\vy)\le  b^{k-1}r(\lone{\vx-\vy})
        \]
        for some constant $a,b>0$ depending only on $A$ and $d$.
    \end{enumeratea}
\end{lem}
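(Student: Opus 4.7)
The proof goes by induction on $k$, driven by the recursion
\[
\sS^r_k(\vx,\vy)\;=\;\sum_{\vz\ne\vy}r(\lone{\vz-\vy})\,\sS^r_{k-1}(\vx,\vz)
\]
obtained by decomposing a $k$-edge path at its next-to-last vertex. The base case $k=1$ is immediate, since $\sS^r_1(\vx,\vy)=r(\lone{\vx-\vy})$, and translation invariance lets me take $\vy=\vzero$ when convenient.

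Part (a) I would handle directly, without induction, by a single scale argument. For $R\gg\max(\lone{\vx},\lone{\vy})$, restrict the sum defining $\sS^\ga_k(\vx,\vy)$ to paths whose $k-1$ interior vertices all lie in the shell $[R,2R]^d$: there are $\asymp R^{d(k-1)}$ such vertex configurations (the non-repetition constraint costs only an $O(R^{-d})$ fraction), every edge has length at most $dR$, and the two boundary edges have length $\asymp R$, so by monotonicity of $r$ each edge weight is $\gtrsim R^{-\ga}$. The scale-$R$ contribution is therefore $\gtrsim R^{d(k-1)-k\ga}$, and summing over dyadic scales $R=2^j\to\infty$ gives divergence precisely when $k\ga\le(k-1)d$.

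For the upper bound in part (b), the window $\ga\in((k-1)d/k,d)$ is nested inside the corresponding window $((k-2)d/(k-1),d)$ for $k-1$ (because $(k-1)/k>(k-2)/(k-1)$), so the induction hypothesis supplies $\sS^\ga_{k-1}(\vx,\vz)\le b^{k-2}\lone{\vx-\vz}^{(k-2)d-(k-1)\ga}$ for $\vz\ne\vx$. Inserting this into the recursion, the sum over $\vz\ne\vx,\vy$ becomes $\sum_{\vz}\lone{\vz-\vy}^{-\ga}\lone{\vz-\vx}^{-\gb}$ with $\gb:=(k-1)\ga-(k-2)d$; direct arithmetic gives $\gb\in(0,d)$ and $\ga+\gb>d$, so Corollary~\ref{cor:sumbd}(a) bounds it by $c\lone{\vx-\vy}^{(k-1)d-k\ga}$. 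The residual $\vz=\vx$ piece equals $\lone{\vx-\vy}^{-\ga}\sS^\ga_{k-1}(\vx,\vx)$; a one-step-deeper use of the recursion plus convergence of $\sum_\vz\lone{\vz}^{(k-3)d-(k-1)\ga}$ (ensured by $\ga>(k-2)d/(k-1)$) shows $\sS^\ga_{k-1}(\vx,\vx)\le C\,b^{k-3}$, and since $(k-1)(d-\ga)>0$ and $\lone{\vx-\vy}\ge 1$ force $\lone{\vx-\vy}^{-\ga}\le\lone{\vx-\vy}^{(k-1)d-k\ga}$, this piece also absorbs into $b^{k-1}\lone{\vx-\vy}^{(k-1)d-k\ga}$ provided $b$ is chosen large enough once and for all. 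For the lower bound, keep only those $\vz$ in the recursion that lie in a ball of radius $\lone{\vx-\vy}/4$ around $(\vx+\vy)/2$, where both $\lone{\vx-\vz}$ and $\lone{\vz-\vy}$ are $\asymp\lone{\vx-\vy}$; combining the $\asymp\lone{\vx-\vy}^d$ lattice points there with the inductive lower bound yields the required factor $\lone{\vx-\vy}^{(k-1)d-k\ga}$.

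Part (c) follows the same template, with Lemma~\ref{lem:gsumbd} taking the place of Corollary~\ref{cor:sumbd}. Applied with $q=r$ and combined with $\int_{\lone{\vx}}^{\infty}x^{d-1}r(x)^2\,dx\le r(\lone{\vx})A$, Lemma~\ref{lem:gsumbd} gives $\sum_{\vz\ne\vx,\vy}r(\lone{\vz-\vy})r(\lone{\vz-\vx})\le c\,r(\lone{\vx-\vy})$; the closed-walk term $\sS^r_{k-1}(\vx,\vx)$ is uniformly bounded in $\vx$ by a constant depending only on $A$ and $d$ via the same estimate, using $r(\lone{\vz})^2\le r(1)r(\lone{\vz})$. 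For the lower bound in (c) a single explicit path suffices: alternate $\vx\leftrightarrow\vx+\ve$ for the first $k-1$ edges and then take one long edge to $\vy$, carrying weight $r(1)^{k-1}r(\lone{\vx-\vy}+O(1))\gtrsim r(\lone{\vx-\vy})$ by slow variation of $r$. The main technical obstacle throughout is the $\vz=\vx$ boundary term in the recursion, which the primary inductive hypothesis cannot control directly because the desired bound formally blows up; it is handled by a parallel one-step-shorter recursion, after which one has to verify that all the constants it generates can be absorbed into the geometric prefactor $b^{k-1}$ by a single choice of $b$ made in advance.
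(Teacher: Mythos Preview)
Your proposal is correct and follows essentially the same route as the paper: dyadic shells for the divergence in (a), induction on $k$ driven by the one-step recursion together with Corollary~\ref{cor:sumbd}\eqref{smit1} for (b), and the analogous argument via Corollary~\ref{cor:sumbd}\eqref{smit2} (equivalently Lemma~\ref{lem:gsumbd}) plus an explicit near-diagonal path for (c). In fact you are more careful than the paper on one point: the paper simply writes $\sS^{\ga}_{l+1}(\vzero,\vx)\le\sum_{\vy\ne\vzero,\vx}\sS^{\ga}_l(\vzero,\vy)\lone{\vx-\vy}^{-\ga}$, silently dropping the closed-walk term $\vy=\vzero$, whereas you isolate that term, bound $\sS^{\ga}_{k-1}(\vx,\vx)$ via a further unfolding of the recursion, and then absorb it into the geometric prefactor $b^{k-1}$ by a once-and-for-all choice of $b$.
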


\begin{proof}[Proof of Lemma~\ref{lem:sumprod}]

Let $\vz$ be a lattice point closest to $\vx/2$. Define $\ell_{i}:=2^{i}\norm{\vx}$ for $i\ge  0$ and consider the open annulus $\cA_{i}$ around $\vz$ of in-radius $\ell_{i}$ and out-radius $2\ell_{i}$. Clearly $|\cA_{i}| \ge  c_{d} \ell_{i}^{d}$ for some constant $c_{d}>0$.

Let $\cP_i, i\ge  0$ be the set of  all paths from $\vzero$ to $\vx$ with $k$ edges where all the vertices on the path, except the first and last one,  are in $\cA_{i}$.  Clearly $\cP_{i}$'s are disjoint and $\lone{e} \le 4\ell_i$ for every edge $e$ belonging to some $\pi\in\cP_i$, so the contribution of $\cP_{i}$ in $\sS_{k}^{\ga}(\vzero,\vx)$  is $\ge  a^{k}\ell_{i}^{d(k-1)-\ga k}$ for some constant $a>0$.\\

\eqref{it:spa} For $\ga \le (k-1)d/k$, the index of $\ell_i$ is nonnegative, so summing over $i$ we get $\sS_{k}^{\ga}(\vzero,\vx)=\infty$. This proves~\eqref{it:spa}.\\

\eqref{it:spb} Note that we have already proved that
\[
    \sS_{k}^{\ga}(\vzero,\vx)\ge  \sum_{\pi \in\cP_0}\prod_{e\in\pi} \lone{e}^{-\ga} \ge a^{k}\norm{\vx}^{(k-1)d-k\ga}.
\]
To show that this is the correct order for $\ga\in ((1-1/k)d, d)$, we will use induction to show that
\begin{align}\label{indhyp1}
    \sS^\ga_L(\vzero,\vx) \le c_1^{L-1}\norm{\vx}^{(L-1)d-L\ga}
    & \text{ for all $\vx \in \dZ^d$}                   \\
    & \text{ and for any $1\le  L < d/(d-\ga)$,}\notag
\end{align}
for some positive constant $c_1$. For $L=1$, it is trivial to see
that $\sS_{1}^{\ga}(\vzero,\vx) =\norm{\vx}^{-\ga}$ and
\eqref{indhyp1} holds. Assuming \eqref{indhyp1} holds for $L=l$ and
$l+1<d/(d-\ga)$, we have
\begin{align*}
    \sS_{l+1}^{\ga}(\vzero,\vx)
    & \le  \sum_{\vy\neq \vzero,\vx} \sS_{l}^\ga(\vzero,\vy) \lone{\vx-\vy}^{-\ga}
    \le    c_1^{l-1}\sum_{\vy\neq \vzero,\vx}  \lone{\vy}^{(l-1)d-l\ga} \lone{\vx-\vy}^{-\ga}.
\end{align*}
So applying Corollary~\ref{cor:sumbd} with $\gb=l\ga-(l-1)d$, we have
\begin{align*}
    \sS_{l+1}^{\ga}(\vzero,\vx) & \le c_1^l\norm{\vx}^{d-\ga-\gb} =
    c_1^l \norm{\vx}^{ld-(l+1)\ga},
\end{align*}
and thus \eqref{indhyp1} holds for $L=l+1$. This proves \eqref{it:spb}.\\

\eqref{it:spc} Now, we move to the proof of the case when $A:=\int_{1}^{\infty}x^{d-1}r(x)dx<\infty$. To see the lower bound for $\sS_k^r(\vzero,\vx)$, it is enough to consider a path that starting from $\vzero$ moves among the set $\{\vy:\lone{\vy}=1\}$  and finally jumps to $\vx$ at the $k$-th step. For the upper bound, we follow the induction argument which leads to the proof of \eqref{indhyp1} to prove
\begin{align*}
    \sS^r_k(\vzero,\vx)\le c^{k-1}r(\lone{\vx}) \text{ for all $\vx \in \dZ^d$ for any $k\ge 1$,}
\end{align*}
where $c$ is as in Corollary~\ref{cor:sumbd}\eqref{smit2}. The main step is to bound $\sum_{\vy} r(\lone{\vy}) r(\lone{\vx-\vy})$, for which we use Corollary~\ref{cor:sumbd}\eqref{smit2}.
\end{proof}

Lemma~\ref{lem:sumprod} together with Lemma~\ref{lem:expldp} gives an estimate for the first-passage time when $\ga > d$.

\begin{lem}\label{lem:fpest}
    Assume $A:=\int_{1}^{\infty}x^{d-1}r(x)dx<\infty$. There exists a constant $c=c(A,d)>0$ such that for any $\vx\in \dZ^{d}$ and $t>0$,
    \begin{align*}
        \pr(T(\vzero,\vx)\le  t) \le  (e^{ct}-1)r(\lone{\vx}).
    \end{align*}
\end{lem}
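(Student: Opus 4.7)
The plan is a direct union bound over paths, stratified by path length. For each path $\pi \in \cP_k(\vzero,\vx)$, the passage time $W_\pi = \sum_{e \in \pi} \go_e / r(\lone{e})$ is a weighted sum of $k$ independent $\mathrm{Exp}(1)$ variables with rates $\{r(\lone{e}) : e \in \pi\}$, so the upper bound in Lemma~\ref{lem:expldp} gives
\[
\pr(W_\pi \le t) \le \left(\frac{et}{k}\right)^k \prod_{e \in \pi} r(\lone{e}).
\]
Summing over $\pi \in \cP_k(\vzero,\vx)$ collapses the product-sum into $\sS^r_k(\vzero,\vx)$, and a further union bound over $k \ge 1$ yields
\[
\pr(T(\vzero,\vx) \le t) \le \sum_{k=1}^{\infty} \left(\frac{et}{k}\right)^{k} \sS^r_k(\vzero,\vx).
\]

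By Lemma~\ref{lem:sumprod}\eqref{it:spc}, the hypothesis $A < \infty$ furnishes a constant $b = b(A,d)$, which we may take $\ge 1$ without loss of generality, such that $\sS^r_k(\vzero,\vx) \le b^{k-1} r(\lone{\vx})$ for every $k \ge 1$. Combining this with the elementary inequality $k^k \ge k!$ (equivalently, $(et/k)^k \le (et)^k/k!$) gives
\[
\pr(T(\vzero,\vx) \le t) \le r(\lone{\vx}) \sum_{k=1}^{\infty} \frac{(ebt)^{k}}{b\, k!} = \frac{r(\lone{\vx})}{b}\bigl(e^{ebt}-1\bigr) \le r(\lone{\vx})\bigl(e^{ct}-1\bigr),
\]
where $c := eb$ and the last inequality uses $b \ge 1$. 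This is the desired estimate.

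There is no substantive obstacle here: the statement is an almost immediate consequence of the two workhorse lemmas of the section. The only mild bookkeeping is to arrange that the constant $b$ may be taken at least $1$ so that the final bound has the clean form $e^{ct}-1$ rather than $(e^{ct}-1)/b$; any conceptual gain from being more careful with the $k^k$ versus $k!$ comparison is irrelevant since the $t$-dependence is anyway collected into a single exponential constant.
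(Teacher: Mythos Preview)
Your proof is correct and follows essentially the same route as the paper: union bound over path lengths, Lemma~\ref{lem:expldp} for each path, then Lemma~\ref{lem:sumprod}\eqref{it:spc} to control $\sS^r_k$, and finally $\sum_{k\ge 1}(ebt/k)^k \le e^{ebt}-1$. The only difference is cosmetic: the paper absorbs the factor $b^{-1}$ directly into the bound $\sum_{k\ge1}(ebt/k)^k$ rather than invoking $b\ge 1$ explicitly.
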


\begin{proof}
For $\cP_{k}(\vzero,\vx)$ as defined in \eqref{cP_k}  we use union bound to have
\begin{align*}
    \pr(T(\vzero,\vx) \le  t)
      & \le  \sum_{k=1}^{\infty}\sum_{\pi\in \cP_{k}(\vzero,\vx)}\pr(W_{\pi}\le  t).
\end{align*}
Applying Lemma~\ref{lem:expldp} to bound the summands of the above display and recalling the definition of $\sS^r_k$ from \eqref{sS}, we have
\begin{align*}
    \pr(T(\vzero,\vx) \le  t)
      & \le  \sum_{k=1}^{\infty}\left( \frac{et}{k}\right)^{k} \sum_{\pi\in \cP_{k}(\vzero,\vx)} \prod_{e\in \pi} r(\lone{e}) \\
      & =\sum_{k=1}^{\infty}\left( \frac{et}{k}\right)^{k} \sS_{k}^r(\vzero,\vx)
    \le   \sum_{k=1}^{\infty} \left( \frac{ebt}{k}\right)^{k} r(\lone{\vx})
\end{align*}
for some constant $b=b(A,d)>0$, where the last inequality follows by applying Lemma~\ref{lem:sumprod}\eqref{it:spc}. The rest of the proof follows easily as $\sum_{k=1}^{\infty}\left( ebt\over k\right)^{k} \le  e^{ebt}-1$.
\end{proof}

\section{Instantaneous Percolation Regime}\label{sec:inst}

\noindent{\bf Proof of Theorem~\ref{thm:ip}}.
\eqref{ip:item1} When $A:=\int_{1}^{\infty}x^{d-1}r(x)=\infty$, it is trivial to show that $|\cB_{t}|=\infty$ for any $t>0$. So we consider the case when $r(k)=k^{-\ga}$ with $\ga<d$. It suffices to show that $\pr(T(\vzero,\vx) > \eps)=0$ for any $\eps>0$ and $\vx\in\dZ^{d}$.
To prove this assertion we will define a sequence $\{\cP_j\}_{j\ge  0}$ of subsets of $\cP(\vzero,\vx)$, which is the set of finite $\sE$-paths joining $\vzero$ and $\vx$, such that whenever $j\ne j'$, any $\pi \in \cP_j$ and $\pi' \in \cP_{j'}$ are edge disjoint, and
\begin{align} \label{Tjineq}
    T_j:=\inf\{W^\ga_{\pi}: \pi \in \cP_j\} \text{ satisfies } \pr(T_{j} > \eps)\le  1-\gd
\end{align}
for some $\gd>0$,    which does not depend on $j$.
Clearly $\{T_{j}\}_{j\ge  0}$ will be a sequence of independent random variables, so that
\[
    \pr(T(0,\vx) > \eps) \le  \prod_{j\ge  0} \pr(T_{j} >\eps).
\]
The product term equals 0 by the property of $T_{j}$, and so the desired assertion will be proved.

In order to define $\{\cP_j\}$, fix an integer $k > d/(d-\ga)$ and for $j\ge  0$ let $\ell_{j}:=2^{j}(k-1)^{j}\norm{\vx}$. Let $\vz$ be one of the lattice points closest to $\vx/2$. Also let $B_{i}^{(j)}, 1\le  i\le  k-1,$ be the annulus centered at $\vz$ and having in-radius $(2i-1)\ell_{j}$ and out-radius $2i\ell_{j}$. With these ingredients, define
\begin{align*}
    \cP_{j} := \{\pi=\la \vx_0\vx_1\ldots \vx_k\ra: \vx_0=\vzero, \vx_k=\vx, \vx_i \in B_{i}^{(j)} \text{ for } i=1, 2, \ldots, k-1\}.
\end{align*}
It is easy to see that
\begin{align}
    (a)\quad & |\cP_j| =  |B_{1}^{(j)}|\cdot |B_{2}^{(j)}| \cdot \ldots \cdot|B_{k-1}^{(j)}| \notag                                       \\
    (b)\quad & c_{i} l_j^d \le  |B^{(j)}_i| \le  C_{i} l_j^d \text{ for some constants $c_i$ and $C_{i}$, and }  \label{B^j_iprop} \\
    (c)\quad & l_j \le  \lone{e} \le  4(k-1) l_j \text{ for all $e$ belonging to some }  \pi \in \cP_j.\notag
\end{align}
In order to obtain \eqref{Tjineq} we use a standard second moment argument involving $N_j:=|\{\pi\in\cP_j: W_{\pi} \le  \eps\}|$ to have
\begin{align}\label{Tjineq0}
    \pr(T_j \le  \eps) = \pr(N_j \ge  1) \ge  \frac{(\E (N_j))^2}{\E( N_j^2)}.
\end{align}
Now using the first inequality of Lemma \ref{lem:expldp}
\begin{align*}
    \E(N_j) = \sum_{\pi \in \cP_j} \pr\left( \sum_{e\in\pi} \lone{e}^\ga \omega_e \le  \eps\right) \ge  \sum_{\pi \in \cP_j} \left(\frac{\eps}{k+\eps}\right)^k \prod_{e \in \pi} \lone{e}^{-\ga}.
\end{align*}
Combining the last inequality with \eqref{B^j_iprop},
\begin{align}\label{Tjineq1}
    \E(N_j) \ge  \left(\frac{\eps}{k+\eps}\right)^k (4(k-1)l_j)^{-\ga k} |\cP_j| \ge  A(k,\eps) \cdot l_j^{d(k-1)-\ga k}
\end{align}
for some constant $A(k,\eps) > 0$. On the other hand, noting that for the paths $\pi, \pi' \in \cP_j$ either $\pi=\pi'$ or $|\pi \cap \pi'| \le k-2$,
\begin{align*}
    \E(N_j^2) & = \sum_{\pi, \pi' \in \cP_j} \pr\left( \sum_{e\in\pi} \lone{e}^\ga \omega_e \le  \eps , \sum_{e\in\pi'} \lone{e}^\ga \omega_e \le  \eps\right)                                           \\
              & = \sum_{\pi \in \cP_j} \pr\left( \sum_{e\in\pi} \lone{e}^\ga \omega_e \le  \eps \right)                                                                                                   \\
              & \qquad + \sum_{m=0}^{k-2} \sum_{\pi, \pi' \in \cP_j: |\pi\cap\pi'|=m} \pr\left( \sum_{e\in\pi} \lone{e}^\ga \omega_e \le  \eps , \sum_{e\in\pi'} \lone{e}^\ga \omega_e \le  \eps\right).
\end{align*}
Using Lemma \ref{lem:expldp} and \ref{lem:jointprob} to bound the summands of the first and second term respectively in the right hand side of the above display,
\begin{align}\label{Tjineq2}
\begin{split}
    \E(N_j^2) &\le \sum_{\pi \in \cP_j} c(k,\eps) \prod_{e\in\pi} \lone{e}^{-\ga} \\
    &\qquad+
    \sum_{m=0}^{k-2} \sum_{\pi, \pi' \in \cP_j: |\pi\cap\pi'|=m}
    c(k,m,\eps) \prod_{e \in \pi \cup \pi'} \lone{e}^{-\ga}.
    \end{split}
\end{align}
Now \eqref{B^j_iprop} suggests that $|\cP_j| \le  \prod_{i=1}^k C_i l_j^{d(k-1)}$ and for any fixed $\pi \in \cP_j$ and $0\le  m\le  k-2$,
\begin{align*}
    |\{\pi' \in \cP_j: |\pi \cap \pi'|=m\}| \le  \prod_{i=1}^k C_{i} l_j^{d(k-1-m)},
\end{align*}
as $\pi \cap \pi'| = m$ implies that there are at most $k-1-m$ end points of edges present in $\pi'$ but absent in $\pi$. So the number of summands in the inner sum for the second term in \eqref{Tjineq2} is at most
$(\prod_{i=1}^k C_i)^2 l_j^{d(2k-2-m)}$. From \eqref{B^j_iprop} we also have that the product term in the first summand of \eqref{Tjineq2} is at most $\prod_{i=1}^{k-1} c_i^{-\ga} l_j^{-\ga k}$ and that in the second summand is at most $\prod_{i=1}^{k-1} c_i^{-2\ga} l_j^{-\ga(2k-m)}$. Hence, using the fact that $\ga<d$
\begin{align}\label{Tjineq3}
\begin{split}
    \E(N_j^2)
    & \le  \prod_{i=1}^{k-1} C_i l_j^{d(k-1)} \cdot c(k,\eps) \prod_{i=1}^{k-1} c_i^{-\ga} l_j^{-\ga k}\\
    & \qquad + \sum_{m=0}^{k-2} \prod_{i=1}^{k-1} C_i^2 l_j^{d(2k-2-m)} \cdot c(k,m,\eps) \prod_{i=1}^{k-1} c_i^{-2\ga} l_j^{-\ga(2k-m)}\\
              & \le  A'(k,\eps) (l_j^{2d(k-1)-2k\ga}+l_j^{d(k-1)-k\ga})
\end{split}
\end{align}
for some constant $A'(k,\eps)>0$. Plugging the estimates of \eqref{Tjineq1} and \eqref{Tjineq3} in \eqref{Tjineq0} and noting that $d(k-1)-k\ga>0$ by our choice of $k$ we finally have
$\pr(T_j \le  \eps) \ge  A(k,\eps)^2/(2A'(k,\eps))=: \gd$. This completes the argument. \\

\noindent\eqref{ip:item2}
We have
\begin{align*}
    \E(|\cB_t|)
      & =\sum_{\vx\in\dZ^d}\pr(T(\vzero,\vx)\le  t)                                                                   \\
      & \le  1+\sum_{\vx\in\dZ^d,\vx\neq\vzero} r(\lone{\vx})(e^{ct}-1)= 1+C_{\ga}(e^{ct}-1)\le e^{c(1\vee C_{\ga}) t}
\end{align*}
where the first inequality follows from Lemma~\ref{lem:fpest} and the second from the fact that $$\sum_{\vx\in\dZ^d,\vx\neq\vzero}r(\lone{\vx})=C_{\ga}<\infty.$$ This completes the proof of the Theorem.\qed

\section{Multi scale analysis}\label{sec:msa}

In this section, our goal is to find suitable upper bound for the first-passage time $T(\vzero,\vx)$ in terms of $\lone{\vx}$ when $\ga\in (d, 2d+1)$. For simplicity, we will only consider the case when $r(k)=k^{-\ga}$ for $k\ge 1$.

\begin{prop} \label{prop:Tgaupbd}
    Assume that $r(k)=k^{-\ga},k\ge 1$ with $\ga \in (d,2d+1)$. Define $\gD(\ga) := 1/\log_2(2d/\ga)$ for $\ga\in(d,2d)$. For any $t>0$, there exist constants $c, C>0$ depending only on $\ga, d$ such that
    \begin{enumeratea}
    \item\label{it:upbda} $\displaystyle\pr\bigl(T(\vzero,\vx) \ge (1+t) c\norm{\vx}^{\ga-2d}\bigr)
    \le\exp\left(-\frac{Ct^2}{1+t}\right)$ for $\ga \in (2d, 2d+1)$

    \item\label{it:upbdb} $\displaystyle\pr\bigl(T(\vzero,\vx) \ge c(1+t) \exp\left(2\sqrt{2d\log 2\log\norm{\vx}}\right)\bigr)
    \le\exp\left(-\frac{t^2}{1+t}\exp(C\sqrt{\log n})\right)$ for $\ga=2d$ and

    \item\label{it:upbdc} $\displaystyle\pr\bigl(T(\vzero,\vx) \ge  (1+t) c(\log\norm{\vx})^{\gD(\ga)}\bigr)
    \le  \exp\left(-C\frac{t^2}{t+1} (\log\norm{\vx})^{\gD(\ga)}\right)$ for $\ga \in (d, 2d)$.
    \end{enumeratea}
\end{prop}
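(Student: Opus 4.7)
My plan is to build, for each target $\vx\in\dZ^d$ with $n:=\lone{\vx}$, a multi-scale hierarchical path from $\vzero$ to $\vx$ and control its total passage time level by level using the sub-Gaussian tail bounds of Lemma~\ref{lem:expldp}. Fix a decreasing sequence of scales $\ell_0=n>\ell_1>\cdots>\ell_K\asymp 1$, with $K$ and the decay rate chosen to match the target bound in each of the three regimes. Starting from the endpoints $(\vzero,\vx)$ at level $0$, at each step $k\mapsto k+1$ I insert, between each consecutive pair $\vu,\vv$ of level-$k$ points, a midpoint $\vy$ chosen from a box $B_k(\vu,\vv)$ of side $\asymp \ell_{k+1}$ centered at $(\vu+\vv)/2$. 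The boxes at different branches of the recursion tree will be arranged to be pairwise disjoint, so that the candidate sub-problems at each stage use disjoint edges and are therefore mutually independent.

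The associated upper-bound random variable is defined recursively by
\[
T^{(k)}(\vu,\vv) := \min_{\vy \in B_k(\vu,\vv)}\bigl[T^{(k+1)}(\vu,\vy) + T^{(k+1)}(\vy,\vv)\bigr],\quad T^{(K)}(\vu,\vv) := W_{\la\vu\vv\ra}.
\]
Working by downward induction on $k$: if $T^{(k+1)}$ satisfies $\pr(T^{(k+1)}>(1+s)\mu_{k+1})\le q_{k+1}(s)$, then a sum of two independent copies exceeds $2(1+s)\mu_{k+1}$ with probability at most $2q_{k+1}(s)$, and the minimum over the $|B_k|\asymp\ell_{k+1}^d$ mutually independent candidates inherits the sharper tail $(2q_{k+1}(s))^{|B_k|}$. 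The base case $T^{(K)}=W_e$ is a single exponential of rate $\asymp\ell_K^{-\ga}$, and the sub-Gaussian upper tail from Lemma~\ref{lem:expldp} propagates through the $K$ recursive levels to produce the $(1+t)$-deviation factor $\exp(-Ct^2/(1+t)\cdot\text{scale})$ in each regime.

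The scales are selected separately in each regime so that $\tau_0:=2^K\ell_K^\ga$ matches the target:
\begin{enumeratei}
\item For $\ga\in(2d,2d+1)$, a geometric sequence $\ell_k=n\cdot 2^{-k/(\ga-2d)}$ with $K=(\ga-2d)\log_2 n$ gives $\tau_0\asymp n^{\ga-2d}$.
\item For $\ga=2d$, geometric decay becomes degenerate; the optimal profile has $\log\ell_k$ decreasing arithmetically in $K-k$ with $K\asymp\sqrt{\log n}$, producing the critical factor $\exp(2\sqrt{2d\log 2\log n})$.
\item For $\ga\in(d,2d)$, super-geometric scales $\ell_k=n^{(\ga/(2d))^k}$ yield $K\asymp\log_{2d/\ga}\log n$ levels and $2^K\asymp(\log n)^{\gD(\ga)}$.
\end{enumeratei}

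The main obstacle will be the simultaneous control of three competing requirements: (i) the boxes $B_k$ must be placed in pairwise disjoint configurations so that the sub-problems at each level are truly independent, which forces $|B_k|\ll \ell_k^d$; (ii) the candidate count $|B_k|\asymp\ell_{k+1}^d$ must still be large enough (at least a constant, ideally growing) for the minimization to yield the needed concentration, which is tight at the deepest levels; and (iii) the sub-Gaussian tail from Lemma~\ref{lem:expldp} must propagate cleanly through $K$ compositions of the sum-and-min operation with the sharp prefactor, not merely the correct order of magnitude. The critical case $\ga=2d$ is the most delicate because of the near-degeneracy of the scale recursion there, and extracting the precise constant $2\sqrt{2d\log 2}$ in the exponent requires careful optimization of the scale profile rather than just qualitative balancing of doubling against minimization.
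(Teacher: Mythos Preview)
Your multi-scale ansatz and the scale choices in each regime are morally right, but the independence claim that drives your tail bound is not correct as stated, and this is a genuine gap rather than a technicality. You write $T^{(k)}(\vu,\vv)=\min_{\vy\in B_k(\vu,\vv)}[T^{(k+1)}(\vu,\vy)+T^{(k+1)}(\vy,\vv)]$ and then assert that the $|B_k|$ candidates in this minimum are mutually independent, giving a tail $(2q_{k+1})^{|B_k|}$. But for two midpoints $\vy_1,\vy_2\in B_k(\vu,\vv)$, the random variables $T^{(k+1)}(\vu,\vy_1)$ and $T^{(k+1)}(\vu,\vy_2)$ share the endpoint $\vu$; unwinding one level, they use boxes $B_{k+1}(\vu,\vy_i)$ centred at $(\vu+\vy_i)/2$, and these boxes overlap whenever $\linf{\vy_1-\vy_2}\le 2\ell_{k+2}$. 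Since $\vy_1,\vy_2$ range over a box of side $\ell_{k+1}$, most pairs are that close, and the resulting sub-trees share edges. Restricting to a $2\ell_{k+2}$--separated subset salvages independence but cuts the candidate count from $\ell_{k+1}^d$ to $(\ell_{k+1}/\ell_{k+2})^d$, and you would then have to redo the entire tail propagation with this smaller exponent; your proposal does not do this. A secondary inconsistency: midpoint insertion halves inter-point distances at every step, so level-$k$ pairs are at distance $\asymp n/2^k$ regardless of your declared $\ell_k$, which conflicts with $\ell_k=n\cdot 2^{-k/(\ga-2d)}$ in part~(a) unless $\ga=2d+1$.

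The paper sidesteps both issues by reversing the order of minimum and sum. Rather than a recursive ``minimum of sums'', it builds a single random path $\hat\pi_k$ whose weight is a \emph{sum of independent exponentials}: at each node $\gs$ of a binary tree one places balls $B_{\gs 0},B_{\gs 1}$ of radius $f_{|\gs|+1}$ around the two current endpoints (not the midpoint) and takes the single minimum-weight edge between them. That minimum is exactly exponential with rate $\ge c\,f_{|\gs|+1}^{2d}f_{|\gs|}^{-\ga}$, and since the balls at distinct $\gs$ are pairwise disjoint (guaranteed by $f(x)<x/2$), all these edge-minima are jointly independent. The total weight $W_{\hat\pi_k}$ is then a sum of independent exponentials, and one invokes the upper-tail part of Lemma~\ref{lem:expldp} \emph{once}, with $\gL$ the sum of inverse rates and $\gl$ the smallest rate; optimising the function $f$ and the depth $k$ separately in each regime ($f(x)=x/a$, $f(x)=x/a_n$, $f(x)=x^{\ga/2d}$) yields the three bounds. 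Placing balls at the endpoints rather than the midpoint also decouples the ball radii $f_k$ from the inter-point distances, which fixes the scale-geometry mismatch in your scheme.
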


In order to prove Proposition \ref{prop:Tgaupbd}, we look at a general ansatz for the optimal path that will give us an appropriate upper bound
for the minimum time to reach a point from the origin.
The idea is to get hold of the minimum among all functions $f:\dR_{+}\to\dR_{+}$
such that the longest edge in the optimal path joining any two points separated by Euclidean distance $n$ from each other connects
the Euclidean balls of radius $f(n)$ around those two points with high probability as $n$ increases to infinity. Identifying this will
enable us to understand the structure of some near-optimal paths, and hence to obtain upper bound for the minimum time to communicate
between two points.

Let $\ball{\vy,k}$ denote the $\ell_\infty$--ball of radius
$k$ around $\vy$, so the volume of $\ball{\vy,k}$ is at least $c
k^d$ for some constant $c$. Fix a point $\vx$ with  $\norm{\vx}=n$.
Obviously
\begin{align*}
    \pr(T(\vzero,\vx) \ge  t) \le  \pr(W_{\pi(\vx)} \ge  t)
\end{align*}
for any (possibly random) path $\pi(\vx)$ joining $\vzero$ and
$\vx$. We will work with some particular choices of $\pi(\vx)$, for
which first we need to introduce some notations.

Fix a function $f:\dR_{+}\to\dR_{+}$ such that $f(x)<x/2$ for all
$x\ge  1$, and let $f_{0}=n$ and $f_{k}=f(f_{k-1})$ inductively for
$1\le  k\le  K:=\max\{k: f_k\ge  1\}$. Define
\begin{align}\label{eq:mscale}
    \vu_0 := \vzero, \vu_1 := \vx, B_0 := \ball{\vu_0 , f_1}, B_1 := \ball{\vu_1 ,
    f_1},
\end{align}
and let $\vu_{01} \in B_0$ and $\vu_{10} \in B_1$ be random vertices such that the edge $\la\vu_{01}\vu_{10}\ra$ has minimum passage time
among all edges connecting the two Euclidean balls $B_0$ and $B_1$, \ie
\begin{align*}
    \la\vu_{01}\vu_{10}\ra := \argmin \left\{W_{\la\vu\vv\ra}: \vu \in B_0, \vv \in B_1 \right\}.
\end{align*}
In general, for $i\ge  0$ and $\gs \in \{0,1\}^i$ we identify $\vu_{\gs 0}$ with $\vu_{\gs 00}$ and $\vu_{\gs 1}$ with $\vu_{\gs 11}$; then we inductively define
\begin{align*}
    B_{\gs j} := \ball{\vu_{\gs j} , f_{i+1}} \text{ for }
    j \in \{0, 1\},
\end{align*}
and let $\vu_{\gs 01} \in B_{\gs 0}$ and $\vu_{\gs 10} \in B_{\gs 1}$ be random points such that
\begin{align*}
    \la\vu_{\gs 01}\vu_{\gs 10}\ra := \argmin \left\{W_{\la\vu\vv\ra}: \vu \in B_{\gs 0}, \vv \in B_{\gs 1}\right\}.
\end{align*}
We denote the length of $\gs$ by $\lone{\gs}$, that is
$\lone{\gs}:=i$ for $\gs \in \{0, 1\}^i$.

Now we define a collection of finite $\sE$-paths
$\{\hat\pi_k\}_{k=1}^K$ joining $\vzero$ and $\vx$ as follows. Since
$\vu_{\gs 00}, \vu_{\gs 01} \in B_{\gs 0}$ and $\lone{\vu_{\gs 00} -
\vu_{\gs 01}} \le f_{\lone{\gs}+1}$, there are nearest-neighbor
paths of length at most $f_{\lone{\gs}+1}$ joining $\vu_{\gs 00}$
and $\vu_{\gs 01}$ and staying inside $B_{\gs 0}$. Choose one such
path $\pi_{\gs 0}$. Similarly, choose one nearest-neighbor path
$\pi_{\gs 1}$ of length at most $f_{\lone{\gs}+1}$ joining $\vu_{\gs
10}$ and $\vu_{\gs 11}$ and staying inside $B_{\gs 1}$. Using the
edges $\{\la\vu_{\gs 01} \vu_{\gs 10}\ra: \gs \in \cup_{i \ge 0}
\{0,1\}^i\}$ and the path segments $\{(\pi_{\gs 0}, \pi_{\gs 1}):
\gs \in \cup_{i \ge 0} \{0,1\}^i\}$ as ingredients  define the paths
$\{\hat\pi_k\}_{1\le k\le K}$ by
\begin{align*}
    \hat\pi_k := \bigcup_{\gs \in \{0, 1\}^{k-1}} (\pi_{\gs 0} \cup \pi_{\gs1}) \bigcup_{i=1}^k \bigcup_{\gs \in \{0, 1\}^{i-1}} \la\vu_{\gs 01} \vu_{\gs10}\ra.
\end{align*}

In words, $\hat\pi_k$ consists of the nearest-neighbor path segments
$\pi_{\gs 0}$ (which stays inside the balls $B_{\gs 0}$ and connects
$\vu_{\gs 00}$ and $\vu_{\gs 01}$) and $\pi_{\gs 1}$ (which stays
inside the balls $B_{\gs 1}$ and connects $\vu_{\gs 10}$ and
$\vu_{\gs 11}$) for $\gs \in \{0, 1\}^{k-1}$ (there are $2^k$ such
path segments, all having length at most $f_k$) and the edges
$\la\vu_{\gs 01} \vu_{\gs10}\ra$ connecting the balls $B_{\gs 0}$
and $B_{\gs 1}$ for $\gs\in \cup_{i=0}^{k-1} \{0, 1\}^i$. See
Figure~\ref{fig:path} for a pictorial description of the path
$\hat\pi_3$ in the Ansatz.
\begin{figure}[htbp]
   \centering
   \includegraphics[width=3.5in]{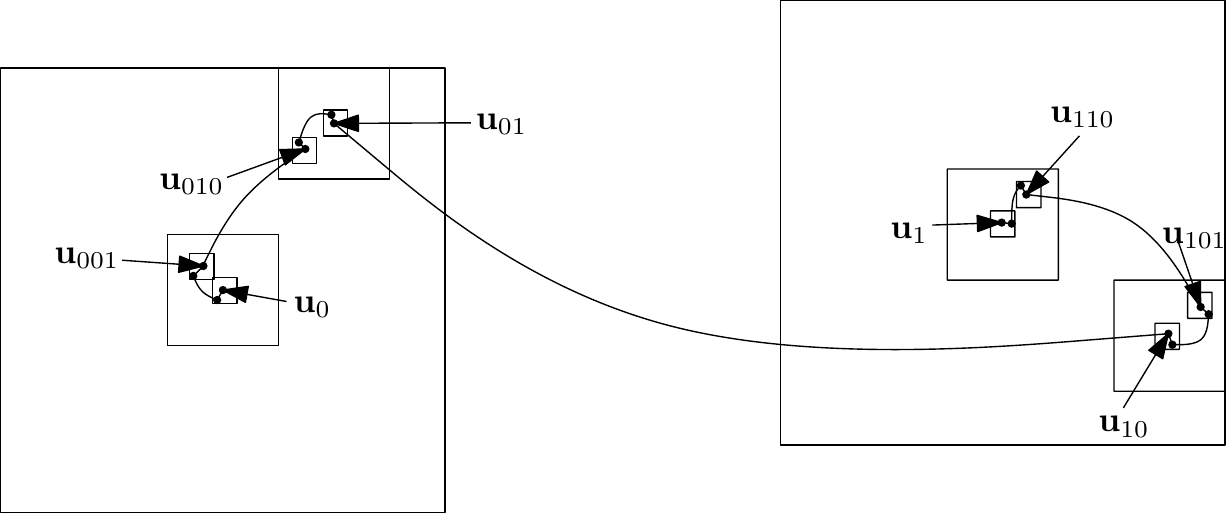}
   \caption{Pictorial description of the paths $\hat\pi_k$ in the Ansatz for $k=3$. Vertices in the small boxes are connected by nearest-neighbor paths.}
   \label{fig:path}
\end{figure}

Having defined the paths $\{\hat\pi_k\}$, we now estimate their
passage times. Basic properties of exponential distribution and the
fact that $\lone{\vu-\vv} \le f_{\lone{\gs}}+2f_{\lone{\gs}+1}$ for
$\vu\in B_{\gs 0}$ and $\vv\in B_{\gs1}$ imply that the passage time
for $\la\vu_{\gs 01}\vu_{\gs 10}\ra$ is exponentially distributed
with rate
\begin{align} \label{rateineq}
    \sum_{\vu\in B_{\gs 0}, \vv\in B_{\gs 1}} \lone{\vv-\vu}^{-\ga} \ge  c
    (f_{\lone{\gs}}+2f_{\lone{\gs}+1})^{-\ga} f_{\lone{\gs}+1}^{2d}.
\end{align}
Also the passage times of $\la\vu_{\gs 01}\vu_{\gs 10}\ra, \gs
\in\cup_{i=0}^{K-1} \{0, 1\}^i,$ are independent, as their
definition involves minimum over disjoint sets of edges.

Combining the last observation with the inequality in
\eqref{rateineq} and the fact that the passage times for the
nearest-neighbor edges are \iid and exponentially distributed with
mean one, it is easy to see that $W_{\hat\pi_k}$ is stochastically
dominated by
\begin{align*}
      & \sum_{\gs\in \{0,1\}^k} \sum_{i=1}^{f_k} X_{\gs,i} +
    \sum_{i=1}^{k} \sum_{\gs\in \{0,1\}^{i-1}}c(f_{i-1}+2f_{i})^{\ga}f_{i}^{-2d}X_{\gs},
\end{align*}
where $\{X_\gs\}$ and $\{X_{\gs,i}\}$ are \iid and exponentially distributed with mean one. Now the second assertion of Lemma~\ref{lem:expldp} with
\begin{align}\label{gL}
\begin{split}
    \gL = \gL_{f,k}
    & := c\sum_{i=1}^{k} 2^{i-1}(f_{i-1}+2f_{i})^{\ga}f_{i}^{-2d} + 2^k f_{k} \\
    \text{ and } \gl = \gl_{f,k}
    & := \left[1+\max_{1\le  i\le k}(f_{i-1}+2f_{i})^{\ga}f_{i}^{-2d}\right]^{-1}
\end{split}
\end{align}
together with the fact that
$T(\vzero, \vx) \le W_{\hat\pi_k}$ implies
\begin{align} \label{Tgabd}
    \pr(T(\vzero,\vx)\ge  (t+1)\gL_{f,k})\le  \exp\left(- \frac{t^2}{2(t+1)} \gL_{f,k} \gl_{f,k} \right)
\end{align}
for any $t \ge 0$ and for any $f:\dR_+ \to \dR_+$ such that $f(x) \le x/2$. We need to minimize the value of $\gL_{f,k}$ over the choices of the function $f$ and $1 \le  k \le K$.

\subsection{Case 1: $\ga \in (2d , 2d+1)$} (Proof of Proposition \ref{prop:Tgaupbd}\eqref{it:upbda}).

In this case, we consider the collection of functions $\{f^a(\cdot):
a>2\}$, where $f^a(x)=x/a$. The optimal choice of $a$ will be
specified later. We will use $a$ in the superscript to denote the dependence on $a$. In that case, $f^a_k=n/a^k$ and $K^a=\lfloor \log
n/\log a\rfloor$.  To understand the order of magnitude of
$\gL_{f^a,k}$ note that
\begin{align}
    \gL_{f^a,k}
        & = c\sum_{i=1}^{k} 2^{i-1}(a+2)^{\ga} a^{(2d-\ga)i} n^{\ga-2d} + 2^k n a^{-k} \notag \\
        & =
    \begin{cases}
        c(n/a)^{\ga-2d}(a+2)^{\ga} \cdot \frac{1-[2a^{2d-\ga}]^k}{1-2a^{2d-\ga}} + 2^k n a^{-k} & \text{ if $2a^{2d-\ga} \ne 1$}\\
        c(n/a)^{\ga-2d}(a+2)^{\ga} \cdot k + 2^k n a^{-k}                                       & \text{ if $2a^{2d-\ga} = 1$}
    \end{cases}\label{gL_kbd} \\
        & =
    \begin{cases}
        c_1 n^{\ga-2d}(1-[2a^{2d-\ga}]^k) + (2/a)^k n                                           & \text{ if $a^{\ga-2d}>2$}\\
        c_1 n^{\ga-2d}k + (2/a)^k n                                                             & \text{ if $a^{\ga-2d}=2$}\\
        c_1 n^{\ga-2d}[2a^{2d-\ga}]^k + (2/a)^k n                                               & \text{ if $a^{\ga-2d}<2$}
    \end{cases} \notag
\end{align}
for some constant $c_1>0$ depending on $\ga, d$ and $a$. To minimize the last expressions with respect to $k$ note that the functions
\begin{align*}
    y \mapsto
    \begin{cases}
        c_1 n^{\ga-2d}(1-[2a^{2d-\ga}]^y) + (2/a)^y n & \text{ if $a^{\ga-2d}>2$} \\
        c_1 n^{\ga-2d}y + (2/a)^y n                   & \text{ if $a^{\ga-2d}=2$} \\
        c_1 n^{\ga-2d}[2a^{2d-\ga}]^y + (2/a)^y n     & \text{ if $a^{\ga-2d}<2$}
    \end{cases}
\end{align*}
are minimized when $y=\log n/\log a +c_2$  for some constant $c_2.$ Now note that $n(2/a)^k = O(n^{\log 2/\log a})$ when $k=\log n/\log a +c$ for some constant $c$.
Keeping in mind that $k$ can be at most $K^a=\lfloor\log n/\log a\rfloor$, let $k^a=\lfloor\log n/\log a+\min\{c_2,0\}\rfloor$. Considering the dominating terms, we have
\begin{align*}
    \min_{k\le  K^a} \gL_{f^a,k} = \gL_{f^a,k^a} =
    \begin{cases}
        O(n^{\ga-2d})                          & \text{ if $a^{\ga-2d}>2$}  \\
        O(n^{\ga-2d}\log n) \gg O(n^{\ga-2d})  & \text{ if $a^{\ga-2d}=2$}  \\
        O(n^{\log 2/\log a}) \gg O(n^{\ga-2d}) & \text{ if $a^{\ga-2d}<2$.}
    \end{cases}
\end{align*}
So our choice of $a$ should satisfy $a^{\ga-2d}>2$. Therefore, \eqref{gL_kbd} implies
\begin{align*}
    \min_{a, k \le  K^a} & \gL_{f^a, k} = \gL_{f^{a_0}, k^{a_0}} = (A_{0}+o(1)) n^{\ga-2d}
\end{align*}
for
\begin{align*}
    a_0   & :=\argmin\{(a+2)^\ga/(a^{\ga-2d}-2): a^{\ga-2d} > 2\} \\
    \text{ and }
    A_{0} & :=(a_0+2)^\ga/(a_0^{\ga-2d}-2).
\end{align*}

Also, it is easy to see that $\gl_{f^{a_0}, k^{a_0}}=[1+n^{\ga-2d}(a_0+2)^\ga/a_0^{\ga-2d}]^{-1}$, so $\gL_{f^{a_0}, k^{a_0}}$ $\gl_{f^{a_0}, k^{a_0}} = C_2+o(1)$ for some constant $C_2>0$.
Therefore, replacing $(f,k)$ by $(f^{a_0},k^{a_0})$ in \eqref{Tgabd} and recalling that $\norm{\vx}=n$ we see that if $\ga \in (2d,2d+1)$, then there are constants $c(\ga), C(\ga)>0$ such that the desired bound holds.\qed

\subsection{Case 2: $\ga =2d$}(Proof of Proposition \ref{prop:Tgaupbd}\eqref{it:upbdb}).

In this case, we consider the sequences $\{a_n\}$, which satisfy $a_n \gg 1$ and $\log a_n \ll \log n$, and (following the notations of \eqref{gL}) define
\begin{align*}
    \gL(\{a_n\},k)              & := c\sum_{i=1}^{k} 2^{i-1}\left(\frac{n}{(a_n)^{i-1}}+2\frac{n}{(a_n)^{i}}\right)^{2d} \left( \frac{n}{(a_n)^{i}} \right)^{-2d} + 2^k \frac{n}{(a_n)^{k}} \\
    \text{ and } \gl(\{a_n\},k) & := \left[1+\max_{1\le  i\le
    k}\left(\frac{n}{(a_n)^{i-1}} + 2\frac{n}{(a_n)^{i}}\right)^{2d} \left(\frac{n}{(a_n)^{i}}\right)^{-2d}\right]^{-1}. \notag
\end{align*}
The particular choice of $\{a_n\}$ will be specified later.

In this case, $k$ can be at most $K^{a_n}=\lfloor \log n/\log (a_n)\rfloor$. Now note that
\begin{align*}
    \gL(\{a_n\},k) & = c\sum_{i=1}^{k} 2^{i-1}(a_n+2)^{2d}  + 2^k n a_n^{-k}
    = c(1+o(1)) 2^k a_n^{2d} + 2^k n a_n^{-k}.
\end{align*}
The two summands in
the last expression will be of same order if $a_n^{k+2d}=n$. Replacing $a_n$ by $n^{1/(k+2d)}$ the right hand side of the last display equals
\[
    c (1+o(1)) \exp\left(k\log 2 +\frac{2d}{k+2d}\log n\right),
\]
which is minimized when $k+2d \approx \sqrt{2d\log_2 n}$. So we choose $k_0:= \lfloor\sqrt{2d\log_2 n}-2d\rfloor$ and $a^0_n := n^{1/(k_0+2d)}$ and hence
\[
    \gL(\{a^0_n\}, k_0) = c(1+o(1)) \exp\left(2\sqrt{2d\log 2\log n}\right).
\]
Also, it can be easily checked that $k_0 \le K^{a^0_n}$ and
\[
    \gl(\{a^0_n\}, k_0) = (1+o(1)) (a^0_n)^{-2d} = (1+o(1))\exp\left(-\sqrt{2d\log 2\log n}\right),
\]
which makes $\gL(\{a^0_n\},k_0) \gl(\{a^0_n\},k_0)=(1+o(1))\exp(C\sqrt{\log n})$ for some $C>0$. Therefore, replacing $\gL_{f, k}$ and $\gl_{f, k}$ by
$\gL(\{a^0_n\}, k_0)$ and $\gl(\{a^0_n\}, k_0)$ respectively in~\eqref{Tgabd} and recalling that $\norm{\vx}=n$ we see that if $\ga =
2d$, then there are constants $c(d), C(d)>0$ such that the desired bound holds.\qed

\subsection{Case 3: $\ga \in (d , 2d)$} (Proof of Proposition~\ref{prop:Tgaupbd}\eqref{it:upbdc}).

In this case, we consider the collection of functions $\{f^\gc: \gc
\in (0,1)\}$, where $f^\gc(x)=x^\gc$. The optimal choice of $\gc$
will be specified later. We will use $\gc$ in the superscript to denote the dependence on $\gc$. In that case, $f^\gc_k=n^{\gc^k}$ so that
$K^\gc=\lfloor \log \log n/\log (1/\gc) \rfloor$. In order to
understand the order of magnitude of of $\gL_{f^\gc,k}$, first note
that $k\le  K^\gc$ implies that
\begin{align}
    \sum_{i=1}^k 2^{i-1} \le &2^k \le  (\log
    n)^{\log2/\log(1/\gc)} \notag\\
    \text{ and } \gc^k \ge (\log n)^{-1},
    &\text{ and hence } n^{-a\gc^k} \ge e^{-a}. \label{Kgc}
\end{align}
The definition of $\gL$ in \eqref{gL} suggests
\[
    \gL_{f^\gc,k} = \sum_{i=1}^{k} 2^{i-1}\left(1+2n^{-(1-\gc)\gc^{i-1}}\right)^{\ga} n^{-(2d\gc-\ga) \gc^{i-1}} + 2^k
    n^{\gc^k}.
\]
For $\gc<\ga/2d$, it is easy to see using \eqref{Kgc} that
\begin{align*}
    \gL_{f^\gc,k} - n^{(\ga-2d\gc)}
      & \le n^{(\ga-2d\gc)}\left[o(1) + n^{(\ga-2d\gc)(\gc-1)} \sum_{i=2}^{k} 2^{i-1} c_\gc \right] \\
      & = n^{(\ga-2d\gc)} o(1) + 2^k n^{\gc^k}.
\end{align*}
For $\gc \ge \ga/2d$, in order to understand the order of magnitude
of $\gL_{f^\gc,k}$ note that
\begin{align*}
      & 2^{-(k-1)} n^{(2d\gc-\ga) \gc^{k-1}} \gL_{f^\gc,k}
    - \left(1+2n^{-(1-\gc)\gc^{k-1}}\right)^{\ga}\\
      & = \sum_{i=1}^{k-1} 2^{-(k-i)} (1+c_\gc)^\ga
    n^{-(2d\gc-\ga)(\gc^{i-1}-\gc^{k-1})}   \le \sum_{i=0}^\infty 2^{-i}.
\end{align*}

So
\begin{align}
    \gL_{f^\gc,k}                                               & = \begin{cases}
    n^{\ga-2d\gc}(1+o(1)) +  2^k n^{\gc^k}                      & \text{ if $\gc < \ga/2d$}    \\
    C(\gc) 2^{k-1} n^{-(2d\gc-\ga) \gc^{k-1}}  +  2^k n^{\gc^k} & \text{ if } \gc \ge  \ga/2d
    \end{cases}
    \label{gL_kbd1}
\end{align}
For fixed $\gc$, the dominating terms in the last expressions are
minimized when $\gc^k=c(\gc)/\log n$ for some constant $c(\gc)$. Now
note that $2^k n^{\gc^k} = O((\log n)^{\log 2/\log(1/\gc)})$ when
$\gc^k=c(\gc)/\log n$. Keeping in mind that $k$ can be at most
$$K^\gc=\lfloor\log\log n/\log(1/\gc)\rfloor,$$ we choose
$k^\gc=\lfloor\log\log n/\log(1/\gc)+\min\{c(\gc),0\}\rfloor$, and
considering the dominating terms
\begin{align*}
    \min_{k\le  K^\gc} \gL_{f^\gc,k} \approx \gL_{f^\gc,k^\gc} =
    \begin{cases}
    O(n^{\ga-2d\gc})                 & \text{ if $\gc < \ga/2d$, }   \\
    O((\log n)^{\log 2/\log(1/\gc)}) & \text{ if $\gc\ge  \ga/2d$.}
    \end{cases}
\end{align*}
Clearly the optimal choice of $\gc$ to minimize the order of magnitude for the above expresion is $\ga/2d$. Therefore,
\eqref{gL_kbd1} will suggest
\begin{align*}
    \min_{\gc, k \le  K^\gc} \gL_{f^\gc, k} &\approx \gL_{f^{\ga/2d}, k^{\ga/2d}} = c(\ga) (\log n)^{\log 2/\log(2d/\ga)}\\
     &\qquad\text{ for some constant $c(\ga)>0$}.
\end{align*}

Also, by the definition of $\gl$ in \eqref{gL}
\begin{align*}
    \gL_{f^{\ga/2d}, k^{\ga/2d}} \gl_{f^{\ga/2d},k^{\ga/2d}}
      & \ge \frac{\sum_{i=1}^{k^{\ga/2d}}
    2^{i-1}}{1+\left(1+n^{-(1-\ga/2d)(\ga/2d)^{k^{\ga/2d}-1}}\right)^\ga}\\
      & =C(\ga) 2^{k^{\ga/2d}} = C(\ga) (\log n)^{\log 2/\log(2d/\ga)}.
\end{align*}
Plugging in the above values of $\gL$ and $\gl$ in \eqref{Tgabd}
and recalling that $\norm{\vx}=n$, we get the desired result.\qed

\section{Self-bounding Inequality for Expected Growth}\label{sec:sbineq}

In this section, we will prove an  inequality for the expected volume of the random growth set $\cB_t$ when $\ga>d$. This will lead to a lower bound for the first-passage time $T(\vzero,\vx)$ later and is inspired by one of the arguments presented in Trapman~\cite{T10}.

For simplicity we will work with the case $r(k)=k^{-\ga}, k\ge 1$ for $L(t)\equiv 1$ with fixed $\alpha>d$. For general $L(\cdot)$ the proof is similar as for $a>1$ and some slowly varying function $L(\cdot)$, we have $\sum_{k\ge n} k^{-a}L(k)=n^{1-a}\hat{L}(n)$ for another slowly varying function $\hat{L}(\cdot)$ and thus the exponents remains same if the change the slowly varying function.

Define
\begin{align} \label{gga}
    g(t):=\E|\cB_t|\text{ for } t\ge  0.
\end{align}
Theorem \ref{thm:ip} suggests that $g(t)\le  e^{ct}$ for some constant $c$ depending only on $\ga$ and $d$. We will improve upon this bound and will eventually obtain a much better one. For that we need to define
\begin{align*}
    f(k,t):=\sup_{\norm{\vx}=k} \pr(T(0,\vx)\le  t) \in [0,1]
\end{align*}
for $k,t>0$.

The following lemma proves that $f(k,t) \le k^{-\ga}h(t)$ for a suitable choice of $h(\cdot)$. Thus the contribution of the two arguments of $f(\cdot,\cdot)$ can be separated, which will be helpful in the analysis of this function.

\begin{lem}\label{lem:ht}
    For any fixed $\ga>d$ and $g$ as in~\eqref{gga}, there exist constants $c,\gd>0$ depending only on $\ga$ and $d$ such that $f(k,t) \le ck^{-\ga}h(t)$,  where
    \[
        h(t) :=  t^\ga \int_0^t g(t-y)(g(y)-1) dy +e^{-\delta t}.
    \]
\end{lem}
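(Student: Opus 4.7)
The starting point I would use is the self-bounding identity obtained by viewing $(\cB_s)_{s \ge 0}$ as a continuous-time Markov chain: given $\vx \notin \cB_s$, the conditional rate at which $\vx$ is added equals $\sum_{\vu \in \cB_s} r(\lone{\vu-\vx})$, so a standard argument (integrating the rate and dropping the indicator $\ind\{\vx\notin\cB_s\}$) yields, for any fixed $\vx$ with $\lone{\vx}=k$,
\[
\pr(\vx\in\cB_t) \le \int_0^t \sum_{\vu \ne \vx} r(\lone{\vu-\vx})\, \pr(\vu \in \cB_s)\, ds.
\]
This is the key starting inequality: the left side is a probability about a point at $\ell_1$-distance $k$, while the right side involves only the expected growth of $\cB_s$ up to time $t$, which is the source of the $g$-factors.

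To produce the $k^{-\alpha}$ prefactor I would split the sum at the threshold $\lone{\vu-\vx} \ge k/2$. In the far regime the factor $r(\lone{\vu-\vx}) \le c\,k^{-\alpha}$ comes out immediately and the residual sum is at most $g(s)-1$ (subtracting the contribution from $\vu=\vzero$), giving a clean contribution $c\,k^{-\alpha} \int_0^t (g(s)-1)\,ds$. In the close regime $\lone{\vu-\vx} < k/2$ we have $\lone{\vu} > k/2$, so $\vu$ is itself far from the origin and $\pr(\vu \in \cB_s)$ is small. I would iterate the Markov-chain bound once more, $\pr(\vu \in \cB_s) \le \int_0^s \sum_\vw r(\lone{\vw-\vu}) \pr(\vw \in \cB_y)\,dy$, and then invoke Corollary~\ref{cor:sumbd}\eqref{smit2} to collapse the resulting double sum via $\sum_\vu r(\lone{\vu-\vx})\, r(\lone{\vw-\vu}) \le c\, r(\lone{\vw-\vx})$. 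Because this bound preserves the $|\cdot|^{-\alpha}$ decay, iterating the scheme packages the growth information into genuine $g$-factors and produces the convolution structure $g\star(g-1)$ visible in $h(t)$, multiplied by $k^{-\alpha}$.

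The additional $e^{-\delta t}$ term I would obtain directly from Lemma~\ref{lem:fpest}: that lemma already gives $f(k,t) \le (e^{ct}-1)\,r(k)$, which for $t$ bounded above by a fixed constant is at most $c'\,r(k)\,e^{-\delta t}$ after adjusting $\delta$ and the constant in front. This handles precisely the short-time regime where the convolution part of $h(t)$ is not yet effective; for large $t$ one relies on the convolution term.

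The main obstacle, as I see it, is producing the \emph{polynomial} prefactor $t^\alpha$ on the convolution rather than the exponential $e^{ct}$ that a naive Gronwall argument on the iterated integral inequality would generate (this is essentially the content of Lemma~\ref{lem:fpest}). Avoiding this blow-up requires using the sharp convolution bound $\sum_\vu r(\lone{\vu-\vx})\, r(\lone{\vw-\vu}) \le c\, r(\lone{\vw-\vx})$ at every iteration rather than the crude $\sum_\vu r(\lone{\vu-\vx}) \le C$; only the sharp version preserves the $|\cdot|^{-\alpha}$ decay and funnels the exponential part of the cluster growth into the $g$-factors themselves, so that only a polynomial power of $t$ accumulates outside the convolution. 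Tracking this accumulation carefully is what I expect yields the exact factor $t^\alpha$ in $h(t)$.
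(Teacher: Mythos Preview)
Your Markov-chain integral inequality is a valid starting point, but the argument has a genuine gap at exactly the place you flag as the ``main obstacle'': producing the polynomial prefactor $t^{\ga}$ rather than the exponential $e^{ct}$. Iterating the bound
\[
\pr(\vx\in\cB_t)\le \int_0^t \sum_{\vu} r(\lone{\vu-\vx})\,\pr(\vu\in\cB_s)\,ds
\]
together with the convolution estimate $\sum_\vu r(\lone{\vu-\vx})\,r(\lone{\vw-\vu})\le c\,r(\lone{\vw-\vx})$ is precisely what underlies Lemma~\ref{lem:fpest}, and it yields $\sum_{n}(cbt)^n/n!=e^{cbt}$, not $t^\ga$. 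The sharp convolution bound you invoke is already the one used there; it does not by itself ``funnel the exponential part into the $g$-factors''. Your far/close split at scale $k/2$ gives a harmless far contribution $ck^{-\ga}\int_0^t(g(s)-1)\,ds$, but in the close regime you are left with the same type of expression at a vertex still at distance $>k/2$, and one more iteration simply reproduces the structure with an extra constant and an extra time-integral. Nothing in the scheme singles out a \emph{finite} number of iterations tied to $\ga$, so a polynomial in $t$ does not emerge.

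The paper's proof supplies the missing idea: instead of iterating over all edges, decompose according to the number $N(\vx)$ of edges in the optimal path. On $\{N(\vx)>at\}$ a union bound over paths combined with Lemma~\ref{lem:expldp} and Lemma~\ref{lem:sumprod}\eqref{it:spc} gives $\pr(T(\vzero,\vx)\le t,\,N(\vx)>at)\le C e^{-\gd t}\norm{\vx}^{-\ga}$ for $a$ large; this is where the $e^{-\gd t}$ comes from (not from small $t$ and Lemma~\ref{lem:fpest}). On $\{N(\vx)\le at\}$, the pigeonhole principle forces the optimal path to contain a single edge $\la\vx_1\vx_2\ra$ of length at least $\lceil\norm{\vx}/(at)\rceil$. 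The density of its passage time is then at most $\norm{\vx}^{-\ga}(at)^{\ga}$, which produces the $t^{\ga}$ prefactor in one stroke; summing over the endpoints $\vx_1,\vx_2$ and integrating over the time the long edge is used gives exactly $\norm{\vx}^{-\ga}(at)^{\ga}\int_0^t g(y)(g(t-y)-1)\,dy$. So the convolution $g\ast(g-1)$ arises from the two halves of the path on either side of a single long edge, not from iterating an integral inequality.
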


It is easy to see that
\begin{align*}
    g(t) = \sum_{\vx\in\dZ^d} \pr(T(0,\vx)\le  t) \le  \sum_{k=0}^{\infty} v_{d}(k)
    f(k,t),
\end{align*}
where
$v_d(k)=|\{\vx\in\dZ^d: \lone{\vx}=k\}|\le  c_d k^{d-1}$ for some constant $c_{d}>0$. So, Lemma \ref{lem:ht} together with the fact that $f(k,t) \le 1$ suggests that for any $R \ge 1$
\begin{align*}
    g(t)
      & \le  1+\sum_{k=1}^{\infty} c_d k^{d-1} f(k,t)                      \\
      & \le 1+ \sum_{k=1}^R c_d k^{d-1} + \sum_{k>R} c_d k^{d-1}k^{-\ga}h(t)
    \le  1+ c'_d R^d + c'_d h(t)R^{d-\ga}.
\end{align*}
Taking $R = c h(t)^{1/\ga}$ and simplifying we see that
\begin{align}\label{eq:rec}
    (g(t)-1)^{\ga/d} \le  c h(t).
\end{align}
for some constant $c=c(d,\ga)>0$ and $h(\cdot)$ as in Lemma~\ref{lem:ht}.

Lemma~\ref{lem:ht} together with \eqref{eq:rec} gives rise to a recursive inequality involving $g(\cdot)$. Solving this inequality we get an improved bound for $g(\cdot)$, which leads to the following bound for the $\ga$-th first-passage time $T(\vzero,\vx)$.

\begin{prop} \label{Tgalwbd}
    For $\ga>d$ there are constants $c, C>0$ depending only on $\ga$ and $d$ such that for $\gc =\log_2(2d/\ga)$
    \begin{align*}
        \log \pr(T(\vzero,\vx) \le  t) \le
        \begin{cases}
        c (\log(1+t))^{1-\gc}t^\gc(1+o(1))- \ga \log\norm{\vx}  + c         & \text{ if }\ga \in (d , 2d) \\
        \frac{4d+2}{\log 2}(\log(1+t))^2 (1+o(1))- \ga \log \norm{\vx} +c           & \text{ if } \ga=2d          \\
        \ga\left(\frac{1+\ga}{\ga-2d} \log(1+t) (1+o(1)) - \log\norm{\vx}\right) +c & \text{ if }\ga >2d.
        \end{cases}
    \end{align*}
\end{prop}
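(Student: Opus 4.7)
The plan is a two-stage bootstrap built on the self-bounding inequality \eqref{eq:rec}, namely $(g(t)-1)^{\ga/d}\le c\,h(t)$ with $h(t)=t^\ga\int_0^t g(t-y)(g(y)-1)\,dy+e^{-\gd t}$. First I will prove by induction a global upper bound on $g(t)=\E|\cB_t|$ in each of the three regimes; then, feeding that bound into Lemma~\ref{lem:ht}, which gives $\pr(T(\vzero,\vx)\le t)\le c\norm{\vx}^{-\ga}h(t)$, and taking logarithms yields the three displayed estimates for $\log\pr(T(\vzero,\vx)\le t)$.

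The correct ansatz in each regime is dictated by a scaling analysis. Writing $\phi:=\log g$, bounding the integral in \eqref{eq:rec} by $t\cdot\max_y g(t-y)g(y)$, and observing that for reasonable $\phi$ with $\phi(0)=0$ the maximum is attained at $y=t/2$, one obtains the functional inequality $\frac{\ga}{d}\phi(t)\le 2\phi(t/2)+(\ga+1)\log t+O(1)$. Its homogeneous scaling exponent solves $2^{\theta}=2d/\ga$, namely $\theta=\gc:=\log_2(2d/\ga)$. The natural ansatz is therefore $g(t)\le A\exp(c\,t^\gc(\log(1+t))^{1-\gc})$ when $\ga\in(d,2d)$ (so $\gc\in(0,1)$); $g(t)\le A\exp(c(\log(1+t))^2)$ when $\ga=2d$ (the $\gc=0$ borderline case, where the forcing $\log t$ takes over); and $g(t)\le A(1+t)^C$ with $C=d(\ga+1)/(\ga-2d)$ when $\ga>2d$ (so $\gc<0$, and the exponent is pinned by the algebraic balance $C\ga/d=2C+\ga+1$).

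Each bootstrap is closed by a continuity/contradiction argument: set $t^\ast:=\inf\{t:g(t)>F(t)\}$ where $F$ is the ansatz, assume for contradiction $t^\ast<\infty$, substitute $g\le F$ on $[0,t^\ast]$ into \eqref{eq:rec} at $t=t^\ast$, and derive $g(t^\ast)<F(t^\ast)$ strictly, once $A,c$ are chosen large enough to absorb $e^{-\gd t}$ and lower-order polynomial corrections. For $\ga>2d$ the key integral estimate is $\int_0^t(1+t-y)^C(1+y)^C\,dy\asymp t^{2C+1}$, yielding $h(t)\le C_1 t^{\ga+2C+1}=C_1 t^{\ga(\ga+1)/(\ga-2d)}$; this exponent is exactly the coefficient of $\log(1+t)$ appearing in the proposition. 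For the other two cases a Laplace expansion localises the convolution near $y=t/2$ with width $\sim t/\sqrt{\log t}$, and the value of the integrand at the peak, namely $\exp(2c(\log(1+t/2))^2)$ when $\ga=2d$ or $\exp(c\,2^{1-\gc}t^\gc(\log t)^{1-\gc})$ when $\ga\in(d,2d)$, reproduces the ansatz on the left-hand side up to a shift in the coefficient of the exponent, and this constraint fixes the minimal admissible $c$ and hence the numeric constants stated in the proposition.

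The main obstacle is the stretched-exponential regime $\ga\in(d,2d)$: the recursion $\phi(t)\le(2d/\ga)\phi(t/2)+O(\log t)$, iterated $\sim\log_2 t$ times, generates both the homogeneous factor $(2d/\ga)^{\log_2 t}=t^\gc$ and the geometric sum $\sum_{i<\log_2 t}(2d/\ga)^i\log(t/2^i)\sim t^\gc\log t$; extracting the correct logarithmic correction $(\log(1+t))^{1-\gc}$ requires careful book-keeping of the $\log$'s through the iteration and through the Laplace expansion of the convolution, and then checking that the refined ansatz is preserved by a single application of \eqref{eq:rec}. The $\ga=2d$ case emerges as the $\gc\downarrow 0$ limit of the same mechanism, while $\ga>2d$ is the simplest because the polynomial ansatz is directly self-consistent without any logarithmic refinement.
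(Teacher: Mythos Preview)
Your approach is correct in outline but takes a genuinely different route from the paper. The paper does not guess an ansatz and verify it as a supersolution via a continuity argument; instead it proves a general self-bounding lemma (Theorem~\ref{thm:gtbd}) by \emph{discrete iteration} starting from the a~priori exponential bound $g(t)\le e^{\gl t}$. The key observation is that if the current upper bound is of the form $g(t)\le P(t)\,e^{a t}$ with $P$ polynomial, then the convolution is trivial because $e^{ay}e^{a(t-y)}=e^{at}$ is independent of $y$: one gets $\int_0^t g(y)g(t-y)\,dy\le t\,P(t)^2 e^{at}$ with no Laplace analysis and no concavity assumption on $\log g$. Feeding this back into \eqref{eq:rec} damps the exponential rate by a factor $\theta=d/\ga$ and multiplies the polynomial prefactor; after $k$ iterations one has $\log g(t)\le \frac{\theta((2\theta)^k-1)}{2\theta-1}\log(c(1+t^{\ga+1}))+\gl\theta^k t$, and optimising over $k$ yields the three regimes directly. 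Your scaling heuristic $\frac{\ga}{d}\phi(t)\le 2\phi(t/2)+(\ga+1)\log t$ is the right intuition, but as written it presumes concavity of $\phi=\log g$, which you cannot assume on the unknown $g$; the paper's iteration sidesteps this entirely.

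Your supersolution/continuity closure is valid in principle, and for $\ga>2d$ it is indeed the simplest case (the exponent balance $C\ga/d=2C+\ga+1$ is correct and $A^{\ga/d-2}\gg 1$ closes it). For $\ga\in(d,2d)$ your plan does work, but the step you flag as ``careful book-keeping'' is where all the content lies: the leading orders in $\frac{\ga}{d}\psi(t)$ and $2\psi(t/2)$ cancel exactly at the critical $\gc$, and it is the $O(t^\gc(\log t)^{-\gc})$ correction coming from $(\log(t/2))^{1-\gc}=(\log t)^{1-\gc}(1-(1-\gc)\log 2/\log t+\cdots)$ that must dominate the forcing $(\ga+1)\log t$. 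This does hold for large $t$ since $t^\gc\gg(\log t)^{1+\gc}$, so the argument can be closed, but you have to treat small $t$ separately using the exponential bound and patch the two regimes together. The paper's iteration avoids this patching and the Laplace step altogether, at the cost of producing the bound constructively rather than verifying a guess.
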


One of the main ingredients in the proof of Proposition
\ref{Tgalwbd} is the following solution of a self-bounding
inequality for positive functions.

\begin{thm}\label{thm:gtbd}
    Let $g(t):[0,\infty)\to\dR$ be a function satisfying
    \begin{align} \label{gineq}
        1 & \le  g(t) \le  e^{\gl t} \text{ and } g(t)^{1/\theta} \le
        c\left(1+t^{\gb-1}\int_{0}^{t}g(y)g(t-y)dy\right)
    \end{align}
    for all $t\ge 0$ for some constants $\gl>0, \theta\in (0,1),
    \gb\ge 0$ and $ c\ge  1$. Then, there exist a constant $c_\theta>1$
    such that $g(t)\le G(t)$ for all $t\ge 0$, where
    \[
        \log G(t)=
        \begin{cases}
            c_\theta(2\gl t)^{\log_{2}(2\theta)} \left( \log(1+t^{\gb})\right)^{\log_2(1/\theta)}(1+o(1)) & \text{ if } \theta >1/2 \\
            \frac{1}{\gb\log 2}(\log(1+t^\gb))^2 (1+o(1))                                                 & \text{ if } \theta=1/2  \\
            \frac{\theta}{1-2\theta}\log(1+t^{\gb}) (1+o(1))                                              & \text{ if }
            \theta<1/2.
        \end{cases}
    \]
\end{thm}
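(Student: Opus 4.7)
The plan is to convert the functional inequality \eqref{gineq} into a dyadic recursion of the form $u(t)\le 2\theta\,u(t/2)+\theta\beta\log t+O(1)$ for $u(t):=\log g(t)$, and then solve this recursion by iterating it and optimizing over the number of iterations. The exponent $\log_2(2\theta)$ in the statement is precisely the logarithmic growth rate of the geometric factor $(2\theta)^n$ per halving, and the secondary exponent $\log_2(1/\theta)=1-\log_2(2\theta)$ emerges when one balances the inhomogeneous sum in the iteration against the base-case contribution coming from $u(s)\le\lambda s$.

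\textbf{Step 1 (midpoint bound).} Passing to the nondecreasing envelope $M(t):=\sup_{s\le t}g(s)$ (which satisfies \eqref{gineq} after absorbing a multiplicative constant), the goal is to establish the midpoint bound
$$\int_0^t M(y)M(t-y)\,dy\le t\,M(t/2)^2\,(1+o(1)).$$
Plugging this into \eqref{gineq} yields $M(t)^{1/\theta}\le c\bigl(1+t^\beta M(t/2)^2\bigr)$, which on taking logarithms gives the recursion $u(t)\le 2\theta\,u(t/2)+\theta\beta\log t+O(1)$.

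\textbf{Step 2 (iterate and optimize).} Iterating the recursion $n$ times produces
$$u(t)\le(2\theta)^n u(t/2^n)+\theta\beta\sum_{k=0}^{n-1}(2\theta)^k\log(t/2^k)+O\Bigl(\sum_{k=0}^{n-1}(2\theta)^k\Bigr),$$
and the hypothesis $u(s)\le\lambda s$ controls the first term by $\lambda\theta^n t$. For $\theta<1/2$, sending $n\to\infty$ drives $(2\theta)^n\to 0$ and the geometric sums converge, yielding a polynomial bound with exponent $\theta/(1-2\theta)$. For $\theta=1/2$, the factor $(2\theta)^k$ is constant and the inhomogeneous sum is arithmetic; choosing $n=\log_2 t^\beta$ balances growth with decay and gives the $(\log(1+t^\beta))^2$ bound with constant $1/(\beta\log 2)$. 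For $\theta>1/2$, both $\lambda\theta^n t$ and $(2\theta)^n\log(t/2^n)$ grow; the balance condition forces $2^n\asymp\lambda t/\log(\lambda t)$, and substituting back produces exactly $c_\theta(\lambda t)^{\log_2(2\theta)}(\log(\lambda t))^{1-\log_2(2\theta)}$, which matches the stated form after using $1-\log_2(2\theta)=\log_2(1/\theta)$ and $\log(\lambda t)\asymp\log(1+t^\beta)$.

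\textbf{Main obstacle.} The central difficulty is justifying Step 1 in the regime $\theta>1/2$. From the mere monotonicity of $M$ one only has the cruder pointwise estimate $M(y)M(t-y)\le M(t)M(t/2)$, which gives the weaker recursion $u(t)\le\frac{\theta}{1-\theta}\bigl(u(t/2)+\beta\log t+O(1)\bigr)$ and, after iteration, the exponent $\log_2(\theta/(1-\theta))$, strictly larger than $\log_2(2\theta)$ for $\theta\in(1/2,1)$. To recover the correct exponent one needs an effective log-concavity $M(y)M(t-y)\le M(t/2)^2$. Since $g$ is not a priori log-concave, my plan is a bootstrap: guess the log-concave majorant $G(t):=\exp\bigl(at^{\log_2(2\theta)}(\log(1+t^\beta))^{\log_2(1/\theta)}\bigr)$, verify directly by substitution that $G$ is a supersolution of \eqref{gineq} for $a$ sufficiently large (the leading-order match $1/\theta=2^{1-p}$ fixes $p=\log_2(2\theta)$, and a Taylor expansion of $\log G(t/2)^2$ around $\log G(t)^{1/\theta}$ forces the exponent $q=\log_2(1/\theta)$ in order to absorb the $\beta\log t$ term), and conclude $g\le G$ on $[0,\infty)$ by a comparison argument on the log-scale that exploits $g(0)=1\le G(0)$ together with the strict supersolution property for large $t$. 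Once $G$ is in place, its log-concavity yields $\int_0^t G(y)G(t-y)\,dy\le t\,G(t/2)^2$, retroactively justifying Step 1 and closing the argument.
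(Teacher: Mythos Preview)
Your Step~2 optimization is essentially the paper's, but the paper circumvents your Step~1 obstacle by organizing the iteration differently. Instead of a dyadic recursion $u(t)\le 2\theta\,u(t/2)+\theta\beta\log t+O(1)$ (which needs the midpoint bound $\int_0^t g(y)g(t-y)\,dy\le t\,g(t/2)^2$ that you correctly flag as unjustified), the paper proves by induction on $k$ that
\[
g(s)\le \bigl(c(1+s^\beta)\bigr)^{a_k}\,e^{\lambda\theta^k s}\quad\text{for all }s\ge 0,\qquad a_k=\frac{\theta\bigl((2\theta)^k-1\bigr)}{2\theta-1}.
\]
The crucial feature is that the exponential factor is \emph{log-linear} in $s$, so $e^{\lambda\theta^k y}\cdot e^{\lambda\theta^k(t-y)}=e^{\lambda\theta^k t}$ is an exact identity, while the polynomial prefactor is monotone and can simply be bounded by its value at $t$. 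Hence $\int_0^t g(y)g(t-y)\,dy\le t\,\bigl(c(1+t^\beta)\bigr)^{2a_k}e^{\lambda\theta^k t}$ with no appeal to log-concavity of $g$ whatsoever; feeding this back into \eqref{gineq} gives $a_{k+1}=\theta(1+2a_k)$ and cuts the exponential rate by a factor $\theta$. One then chooses $k$ to balance $\lambda\theta^k t$ against $a_k\log(c(1+t^\beta))$, exactly as in your Step~2.

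Your bootstrap proposal (guess $G$, verify it is a supersolution, run a comparison argument) could be made to work, but it is considerably more laborious: you would have to check concavity of $s\mapsto s^{\log_2(2\theta)}\bigl(\log(1+s^\beta)\bigr)^{\log_2(1/\theta)}$ (true only for large $s$), patch the small-$t$ regime separately, and justify a comparison principle for the nonlocal convolution inequality. All of this is avoidable once you see that carrying the log-linear remainder $e^{\lambda\theta^k s}$ in the induction hypothesis makes the convolution step an identity rather than an inequality requiring structural assumptions on $g$.
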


We will present the proof of Theorem \ref{thm:gtbd}, followed by that of Proposition~\ref{Tgalwbd} and Lemma~\ref{lem:ht} respectively.

\begin{proof}[Proof of Theorem~\ref{thm:gtbd}]
Given \eqref{gineq}, we claim that
\begin{align}\label{eq:ind}
    \log g(t)\le
    \exp\left(\frac{\theta((2\theta)^k-1)}{2\theta-1} \log\left(c(1+t^{\gb})\right) + \gl \theta^{k} t\right)
\end{align}
for all $t\ge 0$ for all $k\ge 0$. When $2\theta=1$, $((2\theta)^k-1)/(2\theta-1)$ is interpreted as $k$.
We prove \eqref{eq:ind} using induction on $k$.

The case $k=0$ follows readily from our hypothesis.
Now assume that \eqref{eq:ind} holds for $k=m$. This together with
the fact that $t\mapsto t^\gb$ is increasing in $t$ implies
\[
    \int_0^t g(t-y) g(y) \; dy \le \left[(c(1+t^\gb))^{\theta((2\theta)^m-1)/(2\theta-1)}
    \right]^2\exp(\gl \theta^m t) \int_0^t\; dy.
\]
Combining this with the inequality in \eqref{gineq} suggest that for all $t\ge 0$
\begin{align*}
    g(t) & \le
    c^{\theta}\left(1+t^{\gb-1}\int_{0}^{t}g(y)g(t-y)
    \;dy\right)^{\theta}
    \\
         & \le  c^{\theta}\left(1+t^{\gb}
    \left(c(1+t^{\gb})\right)^{2\theta((2\theta)^m-1)/(2\theta-1)}
    \exp(\gl \theta^{m} t)  \right)^{\theta}.
\end{align*}
It is easy to see that the factor multiplied with $t^\gb$ in the above expression is $\ge
1$, so the above implies
\[
    g(t) \le c^\theta (1+t^\gb)^\theta \left[\left(c(1+t^{\gb})\right)^{2\theta((2\theta)^m-1)/(2\theta-1)}
    \exp(\gl \theta^{m} t) \right]^\theta.
\]
Simplifying the expression in the right hand side we conclude
\[
    g(t) \le  \left(c(1+t^{\gb})\right)^{\theta((2\theta)^{m+1}-1)/(2\theta-1)}
    \exp(\gl \theta^{m+1} t),
\]
and thus \eqref{eq:ind} is true for $k=m+1$. This proves the claim \eqref{eq:ind}.

Having proved \eqref{eq:ind}, we will put suitable values of $k$ there to get the desired result.\\

\noindent
{\bf Case 1}. Suppose $\theta>1/2$. The optimal value of $k$ should be such that
the two terms containing $k$ in the right hand side of \eqref{eq:ind} are approximately equal. If we equate them, then we have
\begin{align*}
    \frac{2^k\theta^{k+1}}{2\theta-1}\log\left( c(1+t^{\gb})\right) = \gl \theta^{k}t
    \quad\text{ or }\quad
    k = \log_2\frac{(2-1/\theta)\gl t}{\log\left(c(1+t^{\gb})\right)}.
\end{align*}
So plugging in $k_0:=\lfloor\log_2[(2-1/\theta)\gl t/\log(
c(1+t^{\gb}))] \rfloor$ in \eqref{eq:ind} we have
\begin{align*}
    \log g(t) & \le 2\gl t\theta^{k_0} \le  2\gl t \theta^{-1} \left(\frac{(2-1/\theta)\gl t}{\log\left( c(1+t^{\gb})\right)}\right)^{\log_{2}\theta}  - \frac{\theta}{2\theta-1}\log\left( c(1+t^{\gb})\right) \\
              & \le   c_\theta(2\gl t)^{\log_{2}(2\theta)} \left( \log\left(
    c(1+t^{\gb})\right)\right)^{\log_2(1/\theta)},
\end{align*}
where $\log_\theta c_\theta = -1+\log_2(1-1/(2\theta))$.\\

\noindent{\bf Case 2}. Suppose $\theta<1/2$. Then letting $k$ go to infinity in \eqref{eq:ind}, we have
\[
    g(t)\le \left(c(1+t^{\gb})\right)^{\frac{\theta}{1-2\theta}}.
\]

\noindent{\bf Case 3}. Finally we consider the case $\theta=1/2$. Here we have
\[
    \log g(t)\le  \frac k2 \log(c(1+t^{\gb})) + \gl t 2^{-k} \le \frac k2 \log(c(1+t^{\gb})) + \gl (c(1+t^{\gb}))^{1/\gb} 2^{-k} .
\]
Similar to our approach in case 1, we will use a value of $k$ for
which the two summands in the right hand side are approximately equal. We see that the summands are
equal if $k 2^k=2\gl(c(1+t^\gb))^{1/\gb}/\log(c(1+t^\gb))$. In order to capture the dominating term, it is enough to
choose $k_0=\lfloor\log_2[\gl(c(1+t^\gb))^{1/\gb}]\rfloor$ to have
\[
    \log g(t) \le 2 \cdot \frac {k_0}{2} \log(c(1+t^{\gb}))
    =\frac{1}{\gb\log 2} (\log(1+t^{\gb}))^2(1+o(t)).
\]
This completes the proof.
\end{proof}

\begin{proof}[Proof of Proposition~\ref{Tgalwbd}]
Let $\ga>d$ be fixed. From Lemma~\ref{lem:ht} and equation \eqref{eq:rec} we have
\begin{align}
    (g(t)-1)^{\ga/d} & \le  c \biggl( t^\ga \int_0^t g(t-y)(g(y)-1) \;dy + e^{-\delta t} \biggr) \label{eq:rec2s}
\end{align}
for all $t \ge 0$ for some constant $c>0$ depending only on $\ga$
and $d$. Let $\theta :=d/\ga\in (0,1)$. Combining the previous
inequality with the fact that $g(t)^{1/\theta}\le
2^{1/\theta-1}(1+(g(t)-1)^{1/\theta})$ (by H\"older inequality), and
noting that $0\le g(y)-1<g(y)$ and $e^{-\gd t} \le 1$ we have
\[
    g(t)^{1/\theta} \le  C\biggl(1 +  t^\ga \int_0^t g(t-y)g(y) \;dy\biggr)
\]
for all $t\ge 0$ for some constant $C>1$ depending on $\ga$ and $d$. From Theorem \ref{thm:ip}\eqref{ip:item2} we also have $g(t)\le  e^{\gl t}$ for all $t\ge  0$ for some constant $\gl>0$ depending on $\ga$ and $d$. Therefore, we can apply Theorem~\ref{thm:gtbd} with $\gb=1+\ga$ and use the inequality $1+t^\gb \le (1+t)^\gb$ to have
$g(t)\le G(t)$, where
\begin{align}\label{eq:G(t)}
    \log G(t) =
    \begin{cases}
    c t^{\log_{2}(2d/\ga)} (\log(1+t))^{\log_2(\ga/d)} (1+o(1)) & \text{ if } \ga \in (d,2d) \\
    \frac{2d+1}{\log 2}(\log(1+t))^2 (1+o(1))                           & \text{ if } \ga=2d         \\
    \frac{(1+\ga)d}{\ga-2d}\log(1+t)(1+o(1))                            & \text{ if } \ga >2d
    \end{cases}
\end{align}
for some constant $c$ depending only on $\ga$ and $d$.

Now we use the definition of $f(\cdot,\cdot)$ and Lemma \ref{lem:ht} to have
\begin{align*}
    \log \pr(T(\vzero,\vx) \le  t) & \le  \log f(\norm{\vx},t)                                                                           \\
                                        & \le  c -\ga\log \norm{\vx} + \log \bigl(t^\ga \int_0^t g(t-y)(g(y)-1) \;dy +e^{-\delta t}\bigr) \\
                                        & \le  c - \ga\log \norm{\vx} + \frac{\ga}{d}\log G(t),
\end{align*}
where $G(\cdot)$ is specified in \eqref{eq:G(t)}. Here we used the fact that $G(\cdot)$ satisfies the inequality \eqref{eq:rec2s} as equality. Plugging in the expression for $\log G(t)$ we get the required result.
\end{proof}

\begin{proof}[Proof of Lemma~\ref{lem:ht}]
Fix $k,t>0$ and $\vx\in\dZ^{d}$ with $\lone{\vx}=k$. We begin by estimating $\pr(T(\vzero,\vx)\le  t)$ and then take supremum over all $\vx$ with $\lone{\vx}=k$. Let $N(\vx)$ be the number of edges in the optimal path joining $\vzero$ and $\vx$.  Breaking in terms of the magnitude of $N(\vx)$ we have
\begin{align}\label{breakup2}
    \pr(T(\vzero,\vx) \le  t) \le  \pr(T(\vzero,\vx) \le  t, N(\vx) > at) + \pr(T(\vzero,\vx) \le  t, N(\vx) \le  at)
\end{align}
for any $a>0$. We first show that for $b$ as in Lemma~\ref{lem:sumprod}\eqref{it:spb} and any $a>e\cdot b$, the first term in the right hand side of \eqref{breakup2} satisfies
\begin{align*}
    \pr(T(\vzero,\vx)\le  t, N(\vx)>at) & \le  \frac{a}{b(a-b)}e^{-a t\log(a/eb) }
    \lone{\vx}^{-\ga}.
\end{align*}
Let $N_k$ be the number of self-avoiding paths between $\vzero$ and $\vx$  which have  $k$ many edges and passage time at most $t$. Using union bound and then Markov inequality
\begin{align}
    \pr(T(\vzero,\vx)\le  t, N(\vx)>at)
      & \le  \sum_{k=at}^\infty \pr(N_k \ge  1)
      \le  \sum_{k=at}^\infty \E( N_k) \notag\\
      & \le  \sum_{k=at}^\infty \sum_{\substack{\vx_0=\vzero, \vx_k=\vx \\ \vx_1, \ldots, \vx_{k-1}\in\dZ^d}} \pr\left(\sum_{i=1}^k W_{\la\vx_{i-1}\vx_i\ra} \le  t\right).
    \label{manyedgebd}
\end{align}
In order to estimate the summands we invoke Lemma \ref{lem:expldp} to have for any $\theta>0$
\[
    \pr\left(\sum_{i=1}^k W_{\la\vx_{i-1}\vx_i\ra} \le  t\right) \le  e^{\theta t}\theta^{-k} \prod_{i=1}^k \lone{\vx_i-\vx_{i-1}}^{-\ga}.
\]
Using this bound for the summands in \eqref{manyedgebd} and applying Lemma \ref{lem:sumprod} for the inner sum, we
see that
\begin{align*}
    \pr(T(\vzero,\vx)\le  t, N(\vx)>at)
      & \le  \sum_{k=at}^\infty e^{\theta t}\theta^{-k}b^{ k-1} \lone{\vx}^{-\ga} \\
      & =b^{-1} e^{\theta t } (b/\theta)^{at}  \lone{\vx}^{-\ga}/(1-b/\theta)
\end{align*}
for all $\theta>b$. Taking $\theta=a>eb$ we have
\begin{align}\label{term2bd}
    \pr(T(\vzero,\vx)\le  t, N(\vx)>at)
      & \le  \frac{a}{b(a-b)}e^{-a t\log(a/eb) }  \norm{\vx}^{-\ga}.
\end{align}

To bound the second term in the right hand side of \eqref{breakup2}, first note that if a vertex self-avoiding path between $\vzero$ and $\vx$ of length (\ie number of edges) at most $at$ exists, then it will contain at least one edge shared between two vertices at distance $\ge  \lceil\norm{\vx}/at\rceil$ from each other. Then by Markov inequality we have
\begin{align*}
    \pr(T(\vzero,\vx) &\le t, N(\vx) \le  at)\\
    &
    \le  \sum_{\substack{\vx_1, \vx_2 \in \dZ^d \\
    \lone{\vx_1-\vx_2} \ge  \lceil\norm{\vx}/at\rceil}} \pr(T(\vzero,\vx_1)+W_{\la\vx_1\vx_2\ra}+T(\vx_2,\vx) \le  t).
\end{align*}
Recalling that the density of $W_{\la\vx_1\vx_2\ra}$ is at most $\lone{\vx_1-\vx_2}^{-\ga}\le  \lone{\vx}^{-\ga} (at)^\ga$ whenever $\lone{\vx_1-\vx_2}\ge  \lceil\lone{\vx}/at\rceil$, the right hand side of the last display is
\begin{align*}
    \le  \sum_{\vx_1, \vx_2 \in \dZ^d} \int_0^t d(\pr(T(\vzero,\vx_1) \le  s)) \int_0^{t-s}\pr(T(\vx_2,\vx)\le  y) \norm{\vx}^{-\ga}(at)^\ga \; dy.
\end{align*}
Taking the sum inside the integral the above equals
\[
\lone{\vx}^{-\ga}(at)^\ga \int_0^t \;dg(s) \int_0^{t-s} g(y) \;dy = \norm{\vx}^{-\ga}(at)^\ga\int_0^t g(y)\int_0^{t-y} dg(s) \;dy
\]
after changing the order of integration. Hence, we conclude
\begin{align}\label{term1bd}
    \pr(T(\vzero,\vx) \le  t, N(\vx) \le  at) \le  \norm{\vx}^{-\ga}(at)^\ga \int_0^t g(y)(g(t-y)-1)\; dy.
\end{align}
Combining \eqref{breakup2}, \eqref{term2bd} and \eqref{term1bd}, with $a>eb$ and $\gd=a\log(a/eb)$ we finally have
\begin{align*}
    \pr(T(\vzero,\vx)\le  t) \le  c \norm{\vx}^{-\ga}\left( t^\ga \int_0^t g(t-y)(g(y)-1) \;dy +e^{-\delta t}\right)
\end{align*}
for some constant $c=c(\ga,d)>0$ for all $\vx\in\dZ^d\setminus\{\vzero\},t>0$.
\end{proof}

\section{Stretched Exponential and Exponential Growth Regimes}\label{sec:seg}
In this section, we will put the necessary pieces together and complete the proofs of Theorems~\ref{thm:seg},~\ref{thm:expg} and ~\ref{thm:crit2d}. As before we will work in the case when $L(k)\equiv 1$, so that $r(k)=k^{-\ga}$ for $k\ge 1$. Proof for the general $L(\cdot)$ is similar as explained in Section~\ref{sec:sbineq}.

\subsection{Proof of Theorem \ref{thm:seg}}
The probability estimate in Proposition \ref{prop:Tgaupbd}\eqref{it:upbdc} suggests that if $c>0$ is large enough, then
\begin{align}\label{timebd1}
    \lim_{\norm{\vx}\to\infty} \pr\left(T(\vzero, \vx) \ge
    c(\log\norm{\vx})^{\gD}\right) = 0.
\end{align}
Now for any $\eps>0$ and any $\vx$ satisfying $\norm{\vx} =\lfloor\exp(t^{1/\gD-\eps})\rfloor$,
\begin{align*}
    \pr\left(\log D_t < t^{1/\gD-\eps}\right)
      & \le \pr\left(T(\vzero,\vx) > t\right) \\
      & \le \pr\left(T(\vzero,\vx) >
    (\log\norm{\vx})^{\gD/(1-\eps\gD)}\right),
\end{align*}
so using \eqref{timebd1}
$\lim_{t\to\infty} \pr(\log D_t < t^{1/\gD-\eps})=0$ for any $\eps>0$.

On the other hand, the probability estimate in Proposition \ref{Tgalwbd} suggests that
\[
    \pr\left(T(\vzero,\vx) \le (\log\norm{\vx})^{\gD-\eps}\right)
    \le \exp\left(-\ga\log\norm{\vx}(1-\varphi(\norm{\vx}))\right),
\]
where $\varphi$ is such that $\lim_{l\to\infty} \varphi(l)=0$. Hence,
\[
    \lim_{\norm{\vx} \to \infty}\pr\left(T(\vzero,\vx) \le (\log\norm{\vx})^{\gD-\eps}\right)
    =0,
\]
and using union bound
\begin{align*}
    \pr\left(\log D_t > t^{1/\gD+\eps}\right)
      & \le  \sum_{\vx: \norm{\vx} \ge \exp(t^{1/\gD+\eps})}
    \pr\left(T(\vzero,\vx) \le t\right) \\
      & \le  \sum_{k: k \ge \exp(t^{1/\gD+\eps})}
    c_d  k^{d-1} \exp\left(- \ga \log k (1-\varphi(k))\right).
\end{align*}
By the property of $\varphi$, if $t$ is large enough, then the above
is upper bounded by
$$
     \sum_{k \ge \exp(t^{1/\gD+\eps})} c_d  k^{d-1-(\ga-\gd)}
$$
for any given  $\gd>0$. Since $\ga > d$, the above series is convergent for small enough $\gd$, which implies $\lim_{t\to\infty} \pr(\log D_t > t^{1/\gD+\eps}) = 0$. This completes the proof of the Theorem.\qed

\subsection{Proof of Theorem~\ref{thm:crit2d}}
The probability estimate in Proposition \ref{prop:Tgaupbd}\eqref{it:upbdb} suggests that if $c>0$ is large enough, then
\[
    \lim_{\norm{\vx}\to\infty} \pr\left(T(\vzero, \vx) \ge
    c\exp\left(2\sqrt{2d\log 2\log\norm{\vx}}\right)\right)=0.
\]
Now for any $\eps>0, c<\infty$ and for any
$\vx$ satisfying $$\norm{\vx}=\lfloor\exp((1-\eps) (\log t)^2/(8d\log
2))\rfloor,$$ we have
\begin{align*}
    \pr\Bigl(\log D_t &< \left(\frac{1-\eps}{8d\log 2} (\log t)^2\right)\Bigr)\\
      & \le \pr\left(T(\vzero,\vx) > t \right)           \\
      & \le \pr\left(T(\vzero,\vx) > c\exp(2\sqrt{2d\log
    2\log\norm{\vx}})\right)
\end{align*}
provided $t$ is large enough. This together with the bound of
Proposition~\ref{prop:Tgaupbd}\eqref{it:upbdb} gives $\lim_{t\to\infty} \pr(\log
D_t < (1-\eps)(\log t)^2/(8d\log 2))=0$.

On the other hand, the probability estimate of Proposition~\ref{Tgalwbd} suggests that
\begin{align*}
    \pr\Bigl(T(\vzero,\vx) &\le \exp\bigl(\sqrt{(d-\eps)\log 2\log\norm{\vx}/(4d+2)}\bigr)\Bigr)
   \\
   &\qquad\qquad\qquad\qquad\qquad \le c\exp\left(-(d+\eps)\log\norm{\vx}\right),
\end{align*}
and so if we let $C(d,\eps)=\frac{4d+2}{(d-\eps)\log 2}$ and use union
bound, then
\begin{align*}
    \pr\bigl(\log D_t &> C(d,\eps) (\log t)^2\bigr)\\
      & \le  \sum_{\vx: \norm{\vx} \ge \exp(C(d,\eps)(\log t)^2)}\pr\left(T(\vzero,\vx) \le \exp(\sqrt{\log\norm{\vx}/C(d,\eps)}\right) \\
      & \le  \sum_{k \ge \exp(C(d,\eps)(\log t)^2)} c_d k^{d-1} \exp\left(- (d+\eps) \log k \right) \to 0
\end{align*}
as $t\to\infty$, as the above series is convergent. This completes the proof of the Theorem.\qed

\subsection{Proof of Theorem~\ref{thm:expg}}
As in the proof of Theorem~\ref{thm:seg} and~\ref{thm:crit2d}, we will use Lemma~\ref{lem:ht} to find a lower bound for $T(\vzero,\vx)$ and multi-scale analysis to find a matching upper bound. Using Theorem~\ref{thm:ip}\eqref{ip:item2} under the assumption that $r(k)=k^{-d}L(k)$, $k\ge 1$ with $\int_{1}^{\infty}x^{-1}L(x)dx $ $<\infty$ we have
\[
g(t):=\E\abs{\cB_{t}}\le e^{\gl t}, t\ge 1
\]
for some $\gl<\infty$. Combining with the result from Lemma~\ref{lem:ht} we thus have
$$
    \pr(T(0,\vx)\le  t)  \le cr(\lone{\vx})h(t)
$$
where
\[
    h(t) :=  t^d \int_0^t g(t-y)(g(y)-1) dy +e^{-\delta t}\le e^{bt}
\]
for some $b>0$. Hence, for any $\eps\in(0,1)$, we have
\[
    \pr(T(0,\vx)\le  (d -\eps)/b\log\lone{\vx})  \le c L(\lone{\vx})\lone{\vx}^{-\eps}
\]
for all $\vx\in\dZ^d\setminus\{\vzero\}$.

For the upper bound on $T(\vzero,\vx)$ we will use multi-scale analysis to construct a path that achieves the $\log\lone{\vx}$ lower bound. We define the function
\[
f(x) = \frac{\sqrt{x}}{L(x)^{1/d}(\log x)^{1/2d}},\ x\ge 2.
\]
The choice of this function is not arbitrary and is almost optimal as seen from the arguments below.

Using this function, as done in~\eqref{eq:mscale}, we construct a path $\pi(\vx)$ from $\vzero$ to $\vx$. where $f_{0}=n$ and $f_{i}=f(f_{i-1})$ for $i=1,2,\ldots$. Using~\eqref{gL} we have
\begin{align}
\begin{split}
    \gL
    :=\ &c\sum_{i=1}^k 2^{i-1} f_{i-1}^d f_{i}^{-2d}/L(f_{i-1}) + 2^k f_{k}\\
    =\ & c\sum_{i=1}^k 2^{i-1} L(f_{i-1})\log f_{i-1} + 2^k f_{k}\\
    \text{ and }
    \gl
     :=\ & \left[1+\max_{1\le  i\le k} f_{i-1}^d f_{i}^{-2d}/L(f_{i-1})\right]^{-1}\\
      =\ & \left[1+\max_{1\le  i\le k} L(f_{i-1})\log f_{i-1}\right]^{-1}=\Theta(1).
\end{split}
\end{align}
Now it is easy to see that $n^{1/2^i}\le f_i$ for all $i\ge 0$. If we can show that
\begin{align}\label{eq:ts}
f_{i}\le n^{c/2^i}, i\ge 0
\end{align}
for some $c\in[1,\infty)$, then we have $2^i\approx \log n/\log f_{i-1}, i\ge 1$ and hence
\[
    \gL \approx \left(c\sum_{i=1}^k  L(f_{i-1}) + 1/\log f_{k}\right)\log n = \Theta(\log n)
\]
as $\int_{A}^{A^c}x^{-1}dx=\log c$ and $\int_{1}^{\infty}x^{-1}L(x)dx<\infty$. Then we are done by~\eqref{Tgabd}.  We claim that \eqref{eq:ts} holds when
\[
    \int_{1}^{\infty} \frac{-\log L(x)}{x(\log x)^2}dx <\infty.
\]
We have
\begin{align*}
\frac{\log f_i}{(1/2)\log f_{i-1}} = 1 - \frac{2\log L(f_{i-1}) + \log\log f_{i-1}}{d\log f_{i-1}}.
\end{align*}
Thus~\eqref{eq:ts} holds when
\[
\sum_{i=1}^{k} \frac{- \log L(f_{i-1}) + \log\log f_{i-1}}{\log f_{i-1}}
\]
is bounded by a finite constant independent of $n$. Note that $2^k\approx \log n$.
Now
\[
\int_{A}^{A^c} \frac{dx}{x(\log x)^2} = \frac{1-c^{-1}}{\log A}
\]
and the proof follows by induction on $i$. We leave the exact calculation to the interested reader. The lower bound on $\E\abs{\cB_{t}}$ follows by comparison to LRP as given in \cite[Theorem~1.1(b)]{T10}. \qed

\section{Super-linear Growth Regime}\label{sec:slg}
\noindent{\bf Proof of Theorem~\ref{thm:slg}}: As before, for
simplicity we will restrict ourselves to the rate function
$r(k)=k^{-\ga}, k\ge 1$ where $\ga\in(2d,2d+1)$.  It follows easily
from Proposition~\ref{prop:Tgaupbd}\eqref{it:upbda} that there is a
constant $c>0$ such that
 $\pr(T(\vzero,\vx) \ge t\norm{\vx}^{\ga-2d})\le e^{-ct}$ for all $t$ large enough. This in turn implies that
\[
    \lim_{t \to \infty} \pr(\log D_t \le (1/(\ga-2d)-\eps)\log t)=0.
\]

For the other direction we will prove using an induction argument that there is a constant $C\ge 1$ (to be chosen later) and a recursively defined sequence $(\gc_k,k\ge 0)$  (see~\eqref{gc_m} for the precise definition) satisfying
 \begin{align} \label{gammaprop}
 \begin{split}
 &(a)\quad (\gc_k)_{k\ge0} \text{ is decreasing } \\
 &(b)\quad \gc_k > \frac{1}{\ga-2d} \text{ for all } k \text{ and } \\
 & (c)\quad \gc_k \to \frac{1}{\ga-2d} + \eps/2,
  \end{split}
 \end{align}
 so that
 \begin{align}\label{recursion}
    \lim_{t \to \infty} \pr(\cB_t \subseteq B(\vzero, Ct^{\gc_k})) = 1.
 \end{align}
 Then, choosing $k$ large enough so that $\gc_k
 < 1/(\ga-2d)+\eps$ and applying \eqref{recursion} the proof of the
 theorem will be complete.

 To emphasize the dependence on $\ga$, if
 necessary, we will use the notations $T^{(\ga)}(\cdot,\cdot)$, $\cB^{(\ga)}_t$,
 $D^{(\ga)}_t$ instead of $T(\cdot,\cdot), \cB_t, D_t$, respectively
 when the rate function is $k^{-\ga}, k\ge 1$.

In order to initiate the induction argument for \eqref{recursion} we will use the probability estimate of Proposition \ref{Tgalwbd} for $\ga \in (2d, 2d+1)$. Keeping that in mind, we choose and fix any $\gc_0$ satisfying
\begin{align} \label{gc_0}
    \gc_0 > \frac{\ga(1+\ga)/(\ga-2d)}{\ga-d-1}.
\end{align}
Now note that if $\cB_t \not\subseteq \ball{\vzero, t^{\gc_0}}$, then there is at least one $\vx$ with $\norm{\vx} \ge t^{\gc_0}$ such that $T(\vzero,\vx) \le t$, so using union bound
\[
    \pr(\cB_t \not\subseteq \ball{\vzero, t^{\gc_0}}) \le \sum_{k\ge t^{\gc_0}} \sum_{\vx: \norm{\vx}=k} \pr(T(\vzero,\vx) \le t).
\]
Applying Proposition \ref{Tgalwbd} for $\ga \in (2d, 2d+1)$ to bound the summands of the above display and noting that $|\{\vx: \norm{\vx} =k\}| \le ck^{d-1}$,
\begin{align*}
    \pr(\cB_t \not\subseteq \ball{\vzero, t^{\gc_0}}) & \le  \sum_{k: k\ge t^{\gc_0}}  ck^{-2}\exp\left( \frac{\ga(1+\ga)}{\ga-2d} \log(1+t) - (\ga-d-1) \log k\right) \\
                                                    & \le c \exp\left( \frac{\ga(1+\ga)}{\ga-2d} \log\frac{1+t}{t} \right)\sum_{k: k\ge t^{\gc_0}} k^{-2}  \to 0
\end{align*}
as $t \to \infty$. The last inequality follows from the bound of $\gc_0(\ga-d-1)>\frac{\ga(1+\ga)}{\ga-2d}$ in~\eqref{gc_0} and the fact that $\log k \ge \gamma_0\log t$ for all the summands. Thus, \eqref{recursion} holds for $k=1$.

Now suppose $\gc_m$ has been defined and \eqref{recursion} holds for $k=m$. In order to choose $\gc_{m+1} < \gc_m$ so that  \eqref{recursion} holds for $k=m+1$, first we estimate the length of the longest edge used in the LRFPP process by time $t$ under our induction hypothesis.
Observe that on the event $\{\cB_t \subseteq \ball{\vzero, Ct^{\gc_m}}\}$, for any $\gd>0$
\begin{align*}
      & \min\{W_{\la\vx,\vy\ra}: \vx \in \cB_t , \vy \not \in \ball{\vx, t^\gd}\} \text{ stochastically dominates } \\
      & \qquad \min\{W_{\la\vx,\vy\ra}: \vx \in \ball{\vzero, Ct^{\gc_m}} , \vy \not \in \ball{\vx, t^\gd}\},
\end{align*}
which in turn stochastically dominates an exponential random variable with rate
\begin{align*}
\sum_{\vx \in \ball{\vzero, Ct^{\gc_m}}} \sum_{\vy \not\in \ball{\vx, t^{\gd}}} \lone{\vx-\vy}^{-\ga}
    &\le  |\ball{\vzero, Ct^{\gc_m}}| \sum_{k\ge t^\gd} \sum_{\vu: \lone{\vu}=k} \lone{\vu}^{-\ga}\\
    & \le  ct^{\gc_m d} \sum_{k\ge t^\gd} k^{d-1-\ga} \le ct^{\gc_md-\gd(\ga-d)}
\end{align*}
for some constant $c>0$. Therefore, using the inequality $1-e^{-x} \le x$ we have
\begin{align}
      & \pr\left(\left\{\min\{W_{\la\vx,\vy\ra}: \vx \in \cB_t , \vy \not \in \ball{\vx, t^\gd}\} \le t\right\} \cap \{\cB_t \subseteq \ball{\vzero, Ct^{\gc_m}}\} \right) \notag \\
      &\qquad \le 1-\exp\left(-c t^{1+\gc_md-\gd(\ga-d)}\right)
    \le ct^{1+\gc_md-\gd(\ga-d)}. \label{longedgeest}
\end{align}
Now if $\cB_t \subseteq \ball{\vzero, Ct^{\gc_m}}$ and $W_{\la\vx,\vy\ra} > t$ for all $\vx\in\cB_t$ and $\vy \not \in \ball{\vx, t^\gd}$, then all the edges belonging to the optimal path joining $\vzero$ and $\vx \in \cB_t$ must have Euclidean length at most $t^\gd$. Hence for any $\gb>\ga$,  $\vx \in \cB_t$ implies
\begin{align*}
    t \ge T^{(\ga)}(\vzero,\vx)
      & = \inf_{\pi \in \cP_{\vzero,\vx}: \lone{e} \le t^\gd \forall e \in \pi} \sum_{e \in \pi} \lone{e}^\ga \go_e                       \\
      & \ge  \inf_{\pi \in \cP_{\vzero,\vx}: \lone{e} \le t^\gd \forall e \in \pi} \sum_{e \in \pi} \lone{e}^\gb t^{-\gd(\gb-\ga)} \go_e \\
      & \ge  t^{-\gd(\gb-\ga)} \inf_{\pi \in \cP_{\vzero,\vx}} \sum_{e \in \pi} \lone{e}^\gb \go_e
       = t^{-\gd(\gb-\ga)} T^{(\gb)}(\vzero,\vx),
\end{align*}
which in turn implies $\vx \in \cB^{(\gb)}_{t^{1+\gd(\gb-\ga)}}$. This shows that if we let
 \[ \gc_{m+1} := 1+\gd(\gb-\ga),\]
 then $\cB_t \subseteq \ball{\vzero, Ct^{\gc_{m+1}}}$
 on the event
\begin{align} \label{event}
    \{\cB_t \subseteq \ball{\vzero, Ct^{\gc_m}}\}
    & \cap \{W_{\la\vx,\vy\ra} > t\ \forall\ \vx \in \cB_t \text{ and } \vy \not\in \ball{\vx, t^\gd}\}  \\
    & \cap \{\cB^{(\gb)}_{t^{1+\gd(\gb-\ga)}} \subseteq \ball{\vzero, Ct^{1+\gd(\gb-\ga)}}\} \notag
\end{align}
for any $\gb > \ga$.

Now we choose $\gb>2d+1$ and $\gd>(1+\gc_md)/(\ga-d)$ so that
\begin{align} \label{gc_m}
    \gc_{m+1} & := 1+\gd(\gb-\ga)\notag                                                                         \\
              & = 1+\frac{1+\gc_md}{\ga-d}(2d+1-\ga)+\frac{\eps}{2} \frac{(d+1)(\ga-2d)}{\ga-d}.
\end{align}
We also choose $C\ge 1$ such that $\lim_{t \to \infty} \pr(\cB^{(\gb)}_t \subseteq \ball{\vzero, Ct})=1$. Proposition \ref{induction} guaranties the existence of such a $C$. Simple algebraic manipulation confirms that $\{\gc_k\}_{k\ge 0}$, as defined in the last display, satisfies \eqref{gammaprop}.
Combining \eqref{longedgeest}, our induction hypothesis that~\eqref{recursion} holds for $k=m$ and our choices of $\gd$ and $C$, the limit of the probability of the event in \eqref{event} is 1, and hence
\[
    \lim_{t \to \infty} \pr(\cB_t \subseteq \ball{\vzero, Ct^{\gc_{m+1}}}) =1,
\]
which completes the proof of the induction argument.\qed

\section{Linear Growth Regime}\label{sec:lg}

In this section, we consider the case of fixed $\ga>2d+1$. Let
$\ball{\vu,r}$ denote the Euclidean $\ell_\infty$--ball of radius $r$
around $\vu\in \dR^d$. In order to establish linear growth for the
LRFPP balls, we will show that for any fixed $\eta \in (0,2)$,
\begin{align*}
    \inf_{{\vz, \vw \in \ball{\vzero, n}, \linf{\vz-\vw} \ge \eta n}} \frac{T(\vz,\vw)}{\linf{\vz-\vw}} \ge c_{\eta}>0
\end{align*}
with high probability, where the constant $c_{\eta}$ does not depend
on $n$. For that we need the following lemma.

\begin{lem} \label{nobigjump}
    If $\ga > 2d+1, \theta \in (0, 1)$ and $c_1, c_2$ are any positive
    constants, then
    \[
        \pr\left(W_{\la\vz\vw\ra} \ge c_1 n \text{ for all } \vz \in \ball{\vzero,  c_2n} \text{ and }  \vw \not\in \ball{\vz, n^\theta}\right) \ge 1- Cn^{d+1-\theta(\ga-d)}
    \]
    for some constant $C>0$.
\end{lem}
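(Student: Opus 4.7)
The plan is to prove Lemma~\ref{nobigjump} by a direct union bound over pairs $(\vz,\vw)$, using the tail of the exponential distribution together with the polynomial decay of $r(\cdot)$.

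First, I would fix a pair $\vz, \vw$ with $\lone{\vz-\vw} \ge n^\theta$ and observe that $W_{\la\vz\vw\ra}$ is exponential with rate $r(\lone{\vz-\vw})$, so
\begin{align*}
\pr\bigl(W_{\la\vz\vw\ra} < c_1 n\bigr)
= 1 - \exp\bigl(-c_1 n\, r(\lone{\vz-\vw})\bigr)
\le c_1 n\, r(\lone{\vz-\vw}).
\end{align*}
A union bound over $\vw \notin \ball{\vz, n^\theta}$ then gives
\begin{align*}
\pr\bigl(\exists\, \vw \notin \ball{\vz, n^\theta}:\, W_{\la\vz\vw\ra} < c_1 n\bigr)
\le c_1 n \sum_{k > n^\theta} v_d(k)\, r(k),
\end{align*}
where $v_d(k) = |\{\vu\in\dZ^d: \lone{\vu}=k\}| \le c_d k^{d-1}$.

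Next, using $r(k) = k^{-\ga}$ (the slowly varying factor $L$ being harmless since it can be absorbed into a slightly larger constant by Potter's bound, or handled exactly as in earlier sections), the tail sum satisfies
\begin{align*}
\sum_{k > n^\theta} k^{d-1-\ga} \le C' n^{\theta(d-\ga)}
\end{align*}
because $\ga > d$. Plugging this back in shows that for a single $\vz$, the probability of a ``bad'' long edge is at most $C'' n^{1 - \theta(\ga - d)}$.

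Finally, a second union bound over $\vz \in \ball{\vzero, c_2 n}$, whose cardinality is $O(n^d)$, yields
\begin{align*}
\pr\bigl(\exists\, \vz\in\ball{\vzero,c_2 n},\,\vw\notin\ball{\vz,n^\theta}:\, W_{\la\vz\vw\ra} < c_1 n\bigr) \le C\, n^{d+1-\theta(\ga-d)},
\end{align*}
and taking complements gives the claimed bound. The whole argument is essentially a double union bound; there is no real obstacle, only some care needed to keep track of the constants $c_1,c_2$ (both enter only as harmless multiplicative factors absorbed into $C$) and to verify that the sum $\sum_{k>n^\theta} k^{d-1-\ga}$ is indeed $O(n^{\theta(d-\ga)})$, which is valid precisely because $\ga>d$. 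The hypothesis $\ga>2d+1$ is not used in the lemma itself; rather, it is what makes the exponent $d+1-\theta(\ga-d)$ negative for suitable $\theta \in ((d+1)/(\ga-d), 1)$ in the subsequent application.
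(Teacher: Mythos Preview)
Your proof is correct and follows essentially the same approach as the paper: bound the probability that some long edge has small weight via the tail of the exponential, sum the rates over $\vw\notin\ball{\vz,n^\theta}$ to get $O(n^{\theta(d-\ga)})$, and then account for the $O(n^d)$ choices of $\vz$. The paper phrases the first two steps as ``the minimum of the relevant $W_{\la\vz\vw\ra}$ stochastically dominates an exponential with rate $|\ball{\vzero,c_2n}|\cdot cn^{\theta(d-\ga)}$'' and then applies $1-e^{-x}\le x$, which is exactly equivalent to your double union bound.
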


\begin{proof}
Note that for any fixed $\vz \in \dZ^d$, the random variable $\min\{W_{\la\vz\vw\ra}\mid \vw \not\in \ball{\vz, n^\theta}\}$ is exponentially distributed with rate
\[
    \sum_{\vu: \linf\vu \ge n^\theta} \lone{\vu}^{-\ga} \le \sum_{k \ge n^\theta} c k^{d-1} \cdot k^{-\ga} \le
    cn^{\theta(d-\ga)},
\]
and hence the random variable
$\min\{W_{\la\vz\vw\ra}\mid \vz\in \ball{\vzero, c_2 n}, \vw \not\in \ball{\vz, n^\theta}\}$
stochastically dominates an exponential distribution with rate
$|\ball{\vzero,c_2 n}| \cdot cn^{\theta(d-\ga)}$. Therefore, using the inequality $1-e^{-x} \le x$ and the fact that $|\ball{\vzero,c_2 n}| \le cn^d$ we have
\begin{align*}
      & \pr\left(\min\{W_{\la\vz\vw\ra}: \vz\in \ball{\vzero, c_2 n}, \vw \not\in \ball{\vz, n^\theta}\} \le c_1 n\right) \\
      &\qquad \le 1-\exp\left(-|\ball{\vzero,c_2 n}| \cdot cn^{\theta(d-\ga)} \cdot c_1 n\right) \le C n^{\theta(d-\ga)+d+1}
\end{align*}
for some constant $C>1$.
\end{proof}

Lemma \ref{nobigjump} ensures that if $\theta<1$ is sufficiently close to 1, then with high probability none of the edges, which have $\ell_\infty$--length more than $O(n^\theta)$ and have at least one end in an $\ell_\infty$--ball of radius $O(n)$ around $\vzero$, will be a part of the optimal paths which start from $\vzero$ and have passage time $O(n)$. This observation plays a crucial role in proving linear growth for the LRFPP balls when $\ga>2d+1$. We now use this key observation to produce a linear lower bound for $T(\vx,\vy)$.

\begin{prop} \label{induction}
    For any $\ga>2d+1$ and $\eta\in(0,2)$, if $\eps >0$ is small enough, there exist constants $c(\eta), C>0$ such that
    \[
        \pr\left(\inf_{\vz, \vw \in \ball{\vzero, n}: \linf{\vz-\vw} \ge \eta n} \frac{T(\vz,\vw)}{\linf{\vz-\vw}} \ge c(\eta)\right)\ge 1-Cn^{-(\ga-2d-1-\eps)}.
    \]
\end{prop}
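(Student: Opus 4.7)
The plan is to prove Proposition \ref{induction} by strong induction on the scale $n$, using Lemma \ref{nobigjump} to restrict to short-range paths and then a coarse-graining at scale $\ell := n^\theta$ to invoke the inductive hypothesis. I will choose $\theta \in (0,1)$ close to $1$, specifically $\theta > 1 - \epsilon/(\alpha - d)$, so that Lemma \ref{nobigjump} (with $c_2 = 1$ and a suitable $c_1 > 0$) yields an event $\Omega_n$ of probability at least $1 - Cn^{-(\alpha - 2d - 1 - \epsilon)}$ on which every edge $\la\vu, \vv\ra$ with $\vu \in \ball{\vzero, n}$ and $\linf{\vu - \vv} > n^\theta$ has passage time exceeding $c_1 n$. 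Taking $c(\eta) \le c_1/\eta$, on $\Omega_n$ any candidate path $\pi$ between $\vz, \vw \in \ball{\vzero, n}$ with $\linf{\vz-\vw} \ge \eta n$ and $W_\pi \le c(\eta)\linf{\vz-\vw}$ must consist entirely of edges of $\ell_\infty$-length at most $\ell$.

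Next I would tile $\dZ^d$ into cubes $B_\vi = \ell\vi + [0,\ell)^d$; the restricted path $\pi$ then traverses an $\ell_\infty$-nearest-neighbor sequence of cubes of length $J \ge \eta n/\ell = \eta n^{1-\theta}$. I will call a cube $B_\vi$ \emph{good} if, for every pair $\vp, \vq \in B_\vi$ with $\linf{\vp - \vq} \ge \eta' \ell$ (with $\eta' > 0$ to be fixed), the box-restricted FPP time $T^{B_\vi}(\vp, \vq)$ (taken over paths staying in $B_\vi$) satisfies $T^{B_\vi}(\vp, \vq) \ge c(\eta')\ell$. By the inductive hypothesis applied at scale $\ell$ and translation invariance of the LRFPP, each cube is good with probability $1 - O(\ell^{-(\alpha - 2d - 1 - \epsilon)})$, and crucially these goodness events are \emph{independent} across disjoint cubes since $T^{B_\vi}$ depends only on edges with both endpoints in $B_\vi$. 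A Chernoff bound then shows that only an $o(1)$ fraction of the $O(n^{d(1-\theta)})$ cubes intersecting $\ball{\vzero, n}$ can be bad, with super-polynomially small error. Provided a positive fraction of the $J$ cubes visited by $\pi$ are good and have entry-to-exit distance $\ge \eta' \ell$, each contributes $\ge c(\eta')\ell$ to $W_\pi$, giving $W_\pi \ge \Omega(\ell \cdot n^{1-\theta}) = \Omega(n)$.

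The main obstacle is establishing that a positive fraction of the cubes traversed by $\pi$ have entry-to-exit $\ell_\infty$-distance at least $\eta' \ell$, since otherwise the good-box bound is vacuous for those cubes. To handle this I would project the box-sequence onto the coordinate direction along which $\vw - \vz$ has its largest component (of magnitude $\ge \eta n/d$): the signed sum of entry-to-exit displacements in that coordinate must equal the net displacement, so a pigeonhole argument forces $\Omega(n^{1-\theta})$ of the traversed cubes to have entry-to-exit displacement $\ge \eta' \ell$ in that direction. Combined with the super-polynomial Chernoff concentration and the $O(n^{-(\alpha - 2d - 1 - \epsilon)})$ error from Lemma \ref{nobigjump}, this produces the required probability bound. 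The base case of constant $n$ is trivial upon choosing the overall constant $C$ large enough so that the bound $1 - Cn^{-(\alpha-2d-1-\epsilon)}$ becomes vacuous at that scale.
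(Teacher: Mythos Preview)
Your overall architecture—induction on scale, using Lemma~\ref{nobigjump} to restrict to edges of length at most $\ell=n^\theta$, declaring cubes good/bad via the inductive hypothesis at scale $\ell$, and controlling the number of bad cubes by a Chernoff bound exploiting independence of disjoint boxes—is exactly the skeleton of the paper's proof. The gap is in the geometric step, and it is precisely where the real work of the paper lies.

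The pigeonhole argument does not go through as written. First, the signed sum of entry-to-exit displacements over cube visits is \emph{not} equal to the net displacement $\vw-\vz$: the edges that cross cube boundaries also carry displacement, and on $\Omega_n$ each such edge can have $\ell_\infty$-length up to $\ell$. If the path makes $J$ cube transitions, those boundary edges can account for up to $J\ell$ of the displacement, so you only get $\sum_{\text{visits}}|(\vq-\vp)_1|\ge \eta n-J\ell$, which is vacuous once $J>\eta n/\ell$. Nothing in your setup bounds $J$: a path can oscillate across a single cube face arbitrarily often (e.g.\ in $d=2$, walk along $y\approx\ell$ taking unit steps that alternate between rows $0$ and $1$), making every individual cube visit a single vertex with entry$=$exit. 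Second, even if you could salvage a fixed positive fraction $\rho<1$ of cube visits with entry-to-exit $\ge\eta'\ell$, the inductive constant would shrink by the factor $\rho$ at each of the $\Theta(\log\log n)$ renormalization steps, yielding $T(\vz,\vw)/\linf{\vz-\vw}\ge c(\eta)(\log n)^{-c}$ rather than a uniform constant.

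The paper resolves both issues simultaneously in Lemma~\ref{piprop}. Instead of a single tiling it uses $d+2$ shifted collections $\sB,\tilde\sB^0,\ldots,\tilde\sB^d$, arranged so that any path segment hugging the boundary of a cube in $\sB$ lies in the bulk of a cube in some $\tilde\sB^k$; together with boxes of side $\ell_m^{1+\kappa}$ (a power larger than the separation scale $\eta\ell_m$), this allows the path to be decomposed into disjoint segments, each inside a \emph{good} box with endpoints at $\ell_\infty$-distance $\ge\eta\ell_m$, whose total $\ell_\infty$-length is at least $(1-c_\eta\ell_m^{-\kappa})\linf{\tpi}$. That ``fraction $1-o(1)$'' is exactly what makes the inductive constant $\ell^{-\delta}\prod_m(1-c_\eta\ell_m^{-\kappa})$ converge to something positive rather than decaying to zero.
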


\begin{proof}
Using an induction argument we will prove that there are constants $\eps, \kappa > 0$ small enough, $\gd, \theta \in (0, 1), c_\eta:=5\eta+3d+6$ and $\ell \in
\dN$ large enough such that if $\ell_m:=\ell^{1/\theta^m}$ for $m\ge 0$
and if
\begin{align} \label{def:sA}
    \sA_k := \left\{\inf_{\vz, \vw \in \ball{\vzero, \ell_k^{1+\kappa}}: \linf{\vz-\vw} \ge \eta \ell_k}
    \frac{T(\vz,\vw)}{\linf{\vz-\vw}} < \ell^{-\gd} \prod_{i=1}^{k-1}\left(1-c_\eta\ell_i^{-\kappa}\right)
    \right\},
\end{align}
then for all $k \ge 0$,
\begin{align} \label{indhyp}
    \pr(\sA_k) \le C(\eta) \ell_k^{-(\ga-2d-1-2\eps)}
\end{align}
for some constant $C$. The choices for all the parameters will be
specified as we proceed through the proof. Once we prove \eqref{indhyp}, the proposition will follow by taking $k$ such that $\ell^{1/\theta^k}=m$ and $c(\eta):=\ell^{-\gd}\prod_{i=1}^\infty(1-c_\eta\ell_i^{-\kappa})$.

To prove \eqref{indhyp} for $k=0$ we use union bound and Proposition~\ref{Tgalwbd} with $T(\vzero,\vx)$ replaced by $T(\vz,\vw)$ and $t$ replaced by $\lone{\vz-\vw} \ell^{-\gd}$ to have
\begin{align*}
    \pr\left(\sA_0\right)
      & \le  c \sum_{\vz, \vw \in \ball{\vzero, \ell^{1+\kappa}}: \linf{\vz-\vw} \ge \eta\ell}
     \left(\frac{(\lone{\vz-\vw}\ell^{-\gd})^{(1+\ga)/(\ga-2d)}}{\lone{\vz-\vw}}\right)^\ga\\
      & \le  c(\eta) (2\ell^{1+\kappa})^{2d}\cdot
    \left(\frac{\ell^{(1+\kappa-\gd)(1+\ga)/(\ga-2d)}}{\ell}\right)^\ga,
\end{align*}
as there are at most $(2\ell^{1+\kappa})^{2d}$ terms in the above sum. A simple arithmetic shows that
the exponent of $\ell$ in the right hand side of the last display is $(2d + 1 + \eps -\ga)$ if we take
\begin{align}\label{gd}
    \gd := 1+\kappa-\frac{(1+\eps-2d\kappa)(\ga-2d)}{\ga(\ga+1)}.
\end{align}
Thus \eqref{indhyp} is established for $k=0$.

Now suppose \eqref{indhyp} holds for $k=m$, we will show that it holds for $k=m+1$ as well.
Fix any $\vx_1, \vy_1 \in \ball{\vzero, \ell_{m+1}^{1+\kappa}}$ such that $\linf{\vx_1 - \vy_1} \ge \eta\ell_{m+1}$
and let the optimal $\sE$-path joining $\vx_1$ and $\vy_1$ be $\pi \in \sP_{\vx_1,\vy_1}$. We will bound $W_\pi=T(\vx_1,\vy_1)$ from below. It is easy to see that if
\begin{align*}
    F_{m+1} & := \left\{\la\vz\vw\ra: \vz \in \ball{\vzero, 4\ell_{m+1}^{1+\kappa}}, \vw \not\in \ball{\vz,\ell_m}\right\},\\
    \text{and} \quad
    H_{m+1} & := \left\{\min_{e \in F_{m+1}} W_e \ge 2\ell_{m+1}^{1+\kappa}\right\},
\end{align*}
then $\pi \cap F_{m+1}=\emptyset$ on the event $H_{m+1}$, as $\linf{\vx-\vy} \le
2\ell_{m+1}^{1+\kappa}$.

For the remainder of the argument we will assume that  $H_{m+1}$ occurs. In that case, all the edges that belong to $\pi$ and have at least one of their endpoints in $\ball{\vzero, 4\ell_{m+1}^{1+\kappa}}$ must have $\ell_\infty$--length smaller than $\ell_m$. Our plan is to divide the $\ell_\infty$--ball $\ball{\vzero, 4\ell_{m+1}^{1+\kappa}}$ into smaller disjoint $\ell_\infty$--balls having radius $\ell_m^{1+\kappa}$ and study the contributions of the segments of $\pi$ restricted to those smaller balls to $W_\pi$.
In order to do so, let
\begin{align*}
    O_m & :=  \ell_m^{1+\kappa} \cdot \left\{-4(\ell_{m+1}/\ell_m)^{1+\kappa}+2i-1: 1\le i \le  4(\ell_{m+1}/\ell_m)^{1+\kappa}\right\}, \\
    E_m & :=  \ell_m^{1+\kappa} \cdot \left\{-4(\ell_{m+1}/\ell_m)^{1+\kappa}+2i: 0 \le i \le 4(\ell_{m+1}/\ell_m)^{1+\kappa}\right\},
\end{align*}
and based on these we define the index sets
\begin{align*}
    I_m &:= (O_m)^d, \quad \tI^0_m := (E_m)^d,\\
    \tI^k_m &:= (O_m)^{k-1}\times E_m \times (O_m)^{d-k}, 1\le k\le d.
\end{align*}
\begin{figure}[htbp] 
   \centering
   \includegraphics[width=2.5in]{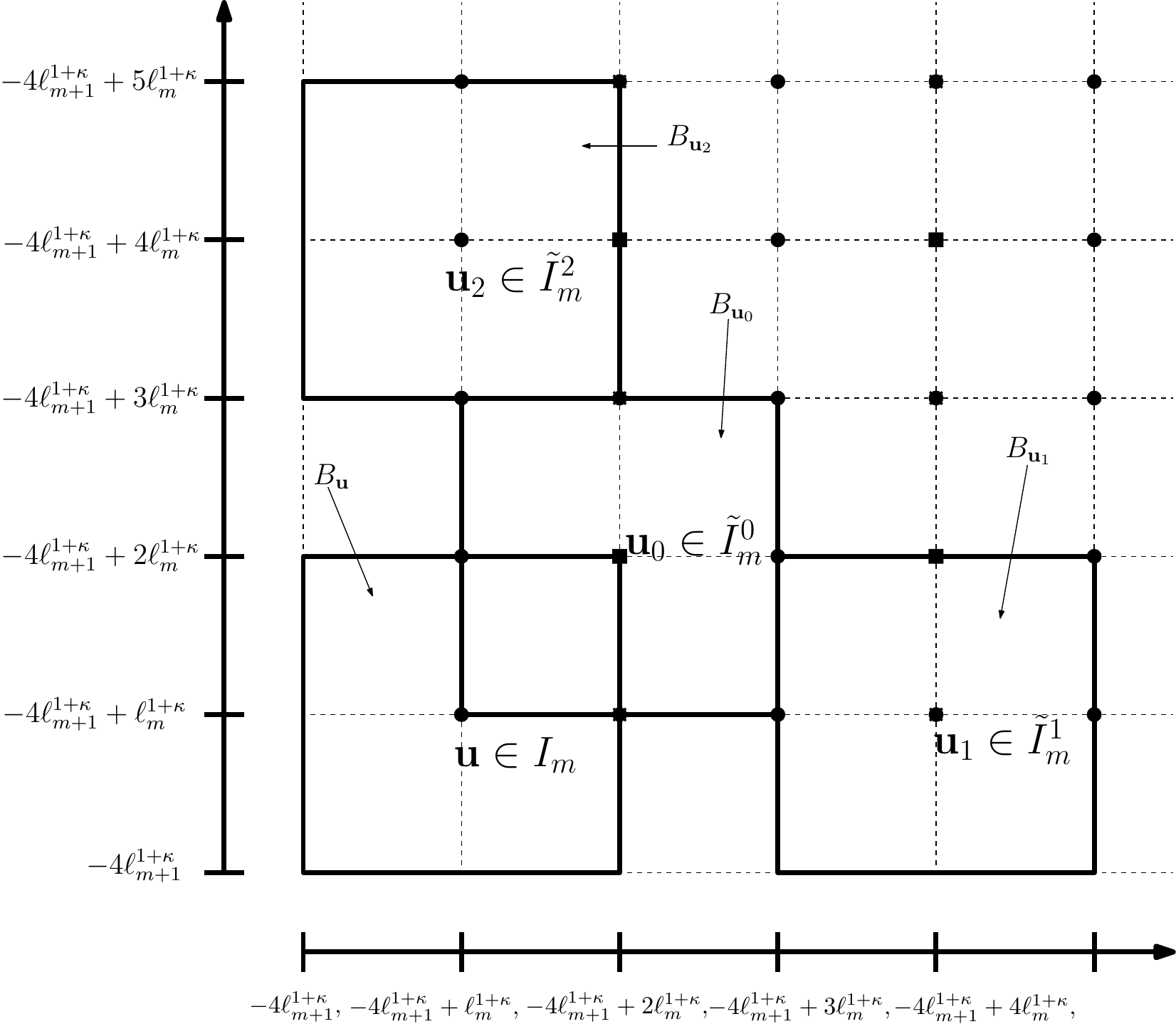}
   \caption{Pictorial description of the balls in $\sB, \sB^k, 0\le k \le 2,$ in case of $d=2$.
    Here the balls $B_\vu=\ball{\vu, \ell_m^{1+\kappa}} \in \sB$ and $B_{\vu_k}=\ball{\vu_k, \ell_m^{1+\kappa}} \in \sB^k, k=0,1,2$.
    Boundary of any ball is in the interior of another ball.}
   \label{fig:block}
\end{figure}
We also define the corresponding collections of disjoint balls
$\sB:=\{B_\vu=\ball{\vu, \ell_m^{1+\kappa}}: \vu \in I_m\}$ and
$\tilde\sB^k:=\{\tB^k_\vv=\ball{\vv, \ell_m^{1+\kappa}}: \vv \in
\tI^k_m\}$ for $0\le k\le d$ (See Figure~\ref{fig:block} for a
pictorial description of one element from each of the collections
$\sB, \sB^k, 0\le k\le 2$ in case of $d=2$). It is easy to see that
each of the collections of disjoint balls $\sB, \tilde\sB^k, 0\le
k\le d,$ covers $\ball{\vzero, 4\ell_{m+1}^{1+\kappa}}$. But the
reason behind considering more than one such collection is to make
sure that the path $\pi$ spends enough time going through the bulk
of one ball or the other, rather than staying close to their
boundaries. By the choice of the collection of smaller balls, any
segment of $\pi$ which stays close to the boundary of the balls in
$\sB$ must pass through one of the balls in $\tilde \sB^k$ for some
$0\le k\le d$.

If $\vu, \hat\vu \in I_m$ and $\linf{\vu-\hat\vu} =2\ell_m^{1+\kappa}$, we say that $B_\vu$ and $B_{\hat\vu}$ are neighboring balls. Similarly, $\tB^k_\vv$ and $\tB^k_{\hat\vv}$ will
be called neighboring balls if $\vv, \hat\vv \in \tI^k_m$ and $\linf{\vv-\hat\vv} = 2\ell_m^{1+\kappa}$. Assuming $H_{m+1}$ occurs, each of the edges of $\pi$, which have at least one endpoint in $\ball{\vzero,4\ell_{m+1}^{1+\kappa}}$, can either stay within one ball
$B_\vu$ (resp.~$\tB^k_\vv$) or go from a ball $B_\vu$ (resp.~$\tB^k_\vv$) to one of its neighboring balls $B_{\hat\vu}$ (resp.~$\tB^k_{\hat\vv}$). Now there are two possibilities for $\pi$; either it goes out of the ball $\ball{\vzero,4\ell_{m+1}^{1+\kappa}}$ at some point, or the entire path remains inside $\ball{\vzero, 4\ell_{m+1}^{1+\kappa}}$.
In the first case, in view of the last observation, if $\la\vz\vw\ra \in \pi$ is the first edge while traversing along $\pi$ from $\vx_1$ to $\vy_1$ such that $\vz \in \ball{\vzero,4\ell_{m+1}^{1+\kappa}}$ and
$\vw \not \in \ball{\vzero,4\ell_{m+1}^{1+\kappa}}$, then $\linf{\vz-\vw} \le \ell_m$ and hence
\[
    \linf{\vx_1-\vz} \ge \linf{\vx_1-\vw} - \linf{\vz-\vw} \ge
    (3\ell_{m+1}^{1+\kappa}-\ell_m) \ge \linf{\vx_1-\vy_1},
\]
as $\linf{\vx_1-\vy_1} \le 2\ell_{m+1}^{1+\kappa}$. In the second case, we obviously have
$\linf{\vx_1-\vy_1} \ge \eta\ell_{m+1}$.
So, in both cases $\pi$ must have a segment $\tilde{\pi}=\la\vx\cdots \vy\ra$ which stays within $\ball{\vzero, 4 \ell_{m+1}^{1+\kappa}}$ and satisfies $\linf{\tpi} \ge \linf{\pi} \ge \eta\ell_{m+1}$.

For a ball $B \in \sB \cup \cup_{k=0}^d\tilde\sB^k$ and $\vz, \vw \in B$ let
$T_B(\vz,\vw)$ denotes the minimum passage time over all paths which join $\vz$ and $\vw$ and stay within $B$.
We say that such a ball $B$ is good (resp.~bad) if
\begin{align}\label{badball}
    \inf_{\vz, \vw \in B: \linf{\vz-\vw} \ge \eta \ell_m}
    \frac{T_B(\vz,\vw)}{\linf{\vz-\vw}} \ge \text{ (resp.~$<$) } \ell^{-\gd} \prod_{k=1}^{m-1}\left(1 - c_\eta
    \ell_k^{-\kappa}\right).
\end{align}
Recalling the definition of $\linf{\pi}$ for an $\sE$-path $\pi$, we will see that

\begin{lem} \label{piprop}
    If the events $H_{m+1}$ and
    \begin{align*}
        L_{m+1} := &\{\text{numbers of bad balls in $\sB$ and } \tilde\sB^k, 0\le k\le d,\\
        &\ \ \text{are at most } \linf{\vx-\vy} \ell_m^{-(1+2\kappa)}\}
    \end{align*}
    occurs, then the path $\tpi=\la\vx \cdots \vy\ra$ obtained as above contains disjoint segments $\{\tpi_i=\la\vz_i \cdots \vw_i\ra\}_{i\ge 1}$ such that
    \begin{enumerate}[$\bullet$]
        \item each $\tpi_i$ stays within some $B_i \in \sB \cup \cup_{k=0}^d \tilde\sB^k$ such that $B_i$ is good according to \eqref{badball}.
        \item  $\linf{\tpi_i} \ge \eta\ell_m$ for all $i$
        \item $\sum_{i\ge 1} \linf{\tpi_i} \ge \linf{\tpi} \left(1-c_\eta\ell_m^{-\kappa}\right)$ for $c_\eta > 0$ defined at the beginning of the proof of Proposition~\ref{induction}.
    \end{enumerate}
\end{lem}

We postpone the proof of Lemma~\ref{piprop}, first we will see that this lemma provides a lower bound for $W_\pi$ by bounding $\sum_{j\ge 1} T_{B_j}(\vz_j,\vw_j)$, which will enable us to conclude
\begin{align}\label{AHL}
    \sA_{m+1}^c \supseteq H_{m+1} \cap L_{m+1}.
\end{align}
By the properties of the path segments $\{\tpi_i=\la\vz_i \cdots
\vw_i\ra\}$ in Lemma \ref{piprop} and the definition of good balls
in \eqref{badball}, it is easy to see that
\begin{align*}
    W_\pi \ge  W_{\tpi} \ge \sum_{j\ge 1} T_{B_j}(\vz_j,\vw_j)
    &\ge \ell^{-\gd} \prod_{k=1}^{m-1}(1-c_\eta\ell_k^{-\kappa})\sum_{j\ge 1} \linf{\vz_j-\vw_j} \\
    & \ge  \ell^{-\gd} \prod_{k=1}^{m}(1-c_\eta\ell_k^{-\kappa}) \linf{\vx-\vy}
\end{align*}
on the event $H_{m+1} \cap L_{m+1}$. Since $\linf{\vx-\vy} \ge \linf{\vx_1-\vy_1}$ and $\vx_1, \vy_1 \in \ball{\vzero, \ell_{m+1}^{1+\kappa}}$ were arbitrary vertices
satisfying $\linf{\vx_1-\vy_1} \ge \eta\ell_{m+1}$, the above inequality justifies \eqref{AHL}.
Thus, in order to complete the induction argument it remains to
estimate $\pr(H_{m+1})$ and $\pr(L_{m+1})$. On one hand, in view of Lemma \ref{nobigjump},
\begin{align}\label{H_mbd}
    \pr(H_{m+1}) \ge 1 - C
    \ell_{m+1}^{(1+\kappa)(d+1)-\theta(\ga-d)}.
\end{align}
On the other hand, our induction hypothesis \eqref{indhyp} for $k=m$ suggests that $B \in \sB$ is bad with probability $\le C\ell_m^{-(\ga-2d-1-2\eps)}$, as $T_B \ge T$ for every argument. Also, it is easy to see that if $\{B_i\}$ is a collection of pairwise disjoint balls, then the events $\{B_i \text{ is good}\}$ are independent. Since $|I_m| = 4^d \ell_m^{(1/\theta-1)(1+\kappa)d}$, the expected number of bad balls in $\sB$ is
\[
    \le 4^d \ell_m^{d(1+\kappa)(1/\theta-1)} \cdot C\ell_m^{-(\ga-2d-1-2\eps)}.
\]
If we choose $\eps, \theta>0$ such that
\begin{align}\label{epstheta}
    \eps \left[2 + \frac{d}{\ga-d-\eps}\right] \le \ga -2d-1 \text{ and } \theta = 1 - \frac{\eps}{\ga-d},
\end{align}
for small enough $\kappa>0$  the exponent of $\ell_m$ in the upper bound for the expected number of bad balls among $\{B_\vu: \vu \in I_m\}$ is
\[
    d(1+\kappa)(1/\theta-1)-(\ga-2d-1-2\eps) < 1/\theta-1 - 2\kappa.
\]
So, if we take $N=4^d(\ell_{m+1}/\ell_m)^{d(1+\kappa)}$ and $p = \eta\ell_m^{1/\theta - 1 - 2\kappa}/3N$, the number of bad balls in $\sB$ is stochastically dominated by the Binomial$(N, p)$ distribution. Now using standard large deviation argument for Binomial distribution and recalling that $\linf{\vx-\vy} \ge \eta \ell_{m+1}$, we have
\begin{align*}
    \pr\bigl(\sum_{u \in I_m} \mathbf 1_{\{\text{$B_\vu$ is bad}\}} > \linf{\vx-\vy} \ell_m^{-(1+2\kappa)} \bigr)
      & \le \pr(\text{Binomial}(N,p) > 3Np)                                \\
      & \le \exp(-Np(3\log 3-2))                                           \\
      & = \exp\left(-\eta(\log 3-2/3)\ell_m^{(1/\theta-1-2\kappa)} \right).
\end{align*}
The same estimate holds for the number of bad balls among
$\{\tB^k_\vv: \vv \in \tI^k_m\}$ for each $0\le k\le d$. Thus by union bound,
\begin{align}\label{good ball bd2}
    \pr(L_{m+1}^c) \le 3\exp\left(-\eta(\log 3-2/3)\ell_m^{(1/\theta-1-2\kappa)} \right).
\end{align}
Combining \eqref{AHL}, \eqref{H_mbd} and \eqref{good ball bd2} if
$\kappa>0$ is small enough, then
\[
    \pr(\sA_{m+1}) \le C\ell_{m+1}^{(1+\kappa)(d+1)-\theta(\ga-d)}
    \le C \ell_{m+1}^{-(\ga-2d-1-2\eps)}
\]
by the choice of $\theta$.  This proves \eqref{indhyp} for $k=m+1$.
\end{proof}

In order to complete the proof of Proposition \ref{induction}, it remains to prove Lemma \ref{piprop} which is now presented.

\begin{proof}[Proof of Lemma \ref{piprop}.]
Recall that $\tpi=\la\vx \cdots \vy\ra \subseteq \ball{\vzero, 4\ell_{m+1}^{1+\kappa}}$ and $\linf{\tpi} \ge \eta\ell_{m+1}$. Let $\{B_{\hat\vu_i}\}_{i\ge 0}, \hat\vu_i \in I_m,$ be a sequence of balls such that $B_{\hat\vu_i}$ is a neighbor of $B_{\hat\vu_{i-1}}$
for $i\ge 1$, $\vx \in B_{\hat\vu_0}$ and for $i\ge 1$ the path $\tpi$ enters $B_{\hat\vu_i}$ after exiting from
$B_{\hat\vu_{i-1}}$. We can think of $\{B_{\hat\vu_i}\}_{i\ge 0}$
as a trajectory of a nearest-neighbor random walk which takes its values in $\sB$.
If a ball $B_\vu$ is bad according to \eqref{badball} and if it appears more than once in the sequence $\{B_{\hat\vu_j}\}$, then we remove the loop created by the associated random walk at $B_\vu$. So a bad ball can appear at most once in this sequence. Abusing notations, we write $\{B_{\hat\vu_i}\}$ for the loop-erased sequence.
Suppose $\tpi$ enters $B_{\hat\vu_i}$ through $\hat\vx_i$ and leaves it through $\hat\vy_i$. So, $\la\hat\vx_i \cdots \hat\vy_i\ra$ is a segment of $\tpi$ staying within $B_{\hat\vu_i}$ whenever $B_{\hat\vu_i}$ is good, otherwise $\la\hat\vx_i \cdots \hat\vy_i\ra$ may not be a segment of $\tpi$.
Also let $\{\hat\vu_{i_j}\}_{j=1}^J$ be the subsequence of $\{\hat\vu_i\}_{i\ge 0}$ such that for each $j$ the portion of $\tpi$ within $B_{\hat\vu_{i_j}}$ does not stay within $\ball{\hat\vx_{i_j}, \eta\ell_m}$.
To simplify notation, we will write $(\bar\vu_j, \bar\vx_j, \bar\vy_j)$ instead of $(\hat\vu_{i_j}, \hat\vx_{i_j}, \hat\vy_{i_j})$, and $\vy_0\equiv \vx$ and $\vx_{J+1}\equiv\vy$.

We say that the segment $\la\bar\vx_j \cdots \bar\vy_j\ra$ is {\it admissible} (resp.~{\it inadmissible}) if $B_{\bar\vu_j}$ is good (resp.~bad).
In the same spirit, the segment $\la\bar\vy_j \cdots \bar\vx_ {j+1}\ra$
will be called {\it inadmissible} if it contains a segment of the form $\la\bar\vx_k \cdots \bar\vy_k\ra$ which is not admissible, otherwise $\la\bar\vy_j \cdots \bar\vx_{j+1}\ra$ will be called {\it admissible}.

An admissible segment $\la\bar\vx_j \cdots \bar\vy_j\ra$ will be called {\it short} if
the entire segment resides within $\ball{\bar\vx_j, \frac 14
    \ell_m^{1+\kappa}}$, otherwise we say that $\la\bar\vx_j \cdots
\bar\vy_j\ra$ is {\it long}. In the same spirit, an admissible segment
$\la\bar\vy_j \cdots \bar\vx_{j+1}\ra$ will be called {\it short} if $\linf{\bar\vy_j-\bar\vx_{j+1}} < 2\eta\ell_m$, otherwise we call it {\it
    long}. Instead of the entire sequences $\{\bar\vx_j\}_{j=1}^{J+1}$ and $\{\bar\vy_j\}_{j=0}^J$, we need to consider the following subsequences $\{\bar\vx_{j_l}\}_{l=1}^{L+1}$ and $\{\bar\vy_{j_l}\}_{l=0}^L$.
We define $\{j_l\}$ along with disjoint sets $\cL_i \subseteq \dN, -1\le i\le 5,$ inductively starting with $j_0 := 0$ and $\cL_i=\emptyset$. Having defined $j_l$,
\begin{enumeraten}
    \item  scan through the segments $\la\bar\vy_{j_l} \cdots \bar\vx_{j_l+1}\ra$, $\la\bar\vx_{j_l+1} \cdots \bar\vy_{j_l+1}\ra$, $\la\bar\vy_{j_l+1} \cdots \bar\vx_{j_l+2}\ra, \ldots$ sequentially,
    \item if $\la\bar\vx_{j_l+k} \cdots \bar\vy_{j_l+k}\ra$ is
    inadmissible for some $k\ge 1$ and all previous segments are
    short, then we let $j_{l+1}:=j_l+k$. If $\linf{\bar\vy_{j_l} - \bar\vx_{j_{l+1}}} \ge \eta\ell_m$,
    then we include $l$ in $\cL_{-1}$, otherwise we include $l$ in $\cL_0$. Then we go back to step (1) with $l$ replaced by $l+1$.
    \item if $\la\bar\vy_{j_l+k-1} \cdots \bar\vx_{j_l+k}\ra$ is the
    first long segment for some $k\ge 1$, then we let
    $j_{l+1} := j_l+k$ and $j_{\tilde  l} := j_l+k-1$. In addition,
    \begin{enumeratea}
        \item if $\linf{\bar\vy_{j_l}-\bar\vy_{j_{\tilde  l}}} \ge \eta \ell_m$, then we include $l$ in $\cL_1$; otherwise we include $l$ in $\cL_2$.
        \item if $j_{l+1}=J+1$, then we let $L=l$ and stop, otherwise $\bar\vy_{j_{l+1}}$ is defined and we go back to step (1) with $l$ replaced by $l+1$.
    \end{enumeratea}
    \item if $\la\bar\vx_{j_l+k} \cdots \bar\vy_{j_l+k}\ra$ is the
    first long segment for some $k\ge 1$, then we let
    $j_{l+1}:=j_l+k$. Here also we include $l$ in $L_3$ or $L_4$ depending on whether $\linf{\bar\vy_{j_l}-\bar\vx_{j_{l+1}}} \ge \eta\ell_m$ or not, and we go back to step (1) with $l$ replaced by $l+1$.
    \item if for some $k \ge 1$ all the segments $\la\bar\vy_{j_l+k'-1} \cdots \bar\vx_{j_l+k'}\ra, 1\le k' \le k,$ and $\la\bar\vx_{j_l+k'} \cdots \bar\vy_{j_l+k'}\ra$, $1\le k' < k,$ are short and $j_l+k=J+1$, then we let $L=l$ and $j_{L+1}=j_l+k$.
    If $\linf{\bar\vy_{j_L}-\bar\vx_{j_{L+1}}} \ge \eta\ell_m$, then we include $L$ in $\cL_2$, otherwise we let $\cL_5:=\{L\}$.
\end{enumeraten}
To simplify notations, we write $(\vx_l, \vy_l, \vu_l)$ instead of
$(\bar\vx_{j_l}, \bar\vy_{j_l}, \bar\vu_{j_l})$ and $\tilde l$ instead of $j_{\tilde  l}$.

Having defined  $\{\vx_l\}_{l=1}^{L+1}$ and $\{\vy_l\}_{l=0}^L$, note that if $l \in \cL_4$,
then $\la\vx_{l+1} \cdots \vy_{l+1}\ra$ is a segment of $\tpi$ and it has a subsegment of the form
$\la\vx_{l+1} \cdots \tilde\vx_{l+1}\ra$ such that $\linf{\vx_{l+1} - \tilde\vx_{l+1}} \ge \frac 14
\ell_m^{1+\kappa}$. So if $|\cL_4| \ge \linf{\vx - \vy}/\frac 14\ell_m^{1+\kappa}$, then
\[
    \sum_{l\in\cL_4} \linf{\la\vx_{l+1} \cdots \tilde\vx_{l+1}\ra}
    \ge  \linf{\vx - \vy},
\]
and hence the segments $\{\la\vx_{l+1} \cdots \tilde\vx_{l+1}\ra: l\in \cL_4\}$ fulfil the criteria of Lemma \ref{piprop}. Otherwise if $|\cL_4| < \linf{\vx - \vy}/\frac 14 \ell_m^{1+\kappa}$, then using triangle inequality and noting that $\vy_{0}=\vx, \vx_{L+1}=\vy$,
\begin{align*}
    \sum_{l \not\in \cL_{-1}\cup\cL_0}  \linf{\vx_{l} - \vy_{l}}
                                       & \ge  \linf{\vx-\vy} - \sum_{i=-1}^5 \sum_{l \in \cL_i} \linf{\vy_{l}-\vx_{l+1}} - \sum_{l \in \cL_{-1}\cup\cL_0} \linf{\vx_{l} - \vy_{l}}  \\
                                       & \ge  \linf{\vx-\vy} - \sum_{l\in \cL_1} \left(\linf{\vy_{l} - \vy_{\tilde l}} + \linf{\vy_{\tilde l} - \vx_{l+1}}\right)                   \\
                                       & \qquad  - \eta\ell_m (1+|\cL_0|+4\linf{\vx - \vy}/\ell_m^{1+\kappa}) \\
                                       & \qquad - \sum_{l\in \cL_{-1} \cup \cL_2 \cup \cL_3} \linf{\vy_{l} - \vx_{l+1}} - \sum_{l \in \cL_{-1}\cup\cL_0} \linf{\vx_{l} - \vy_{l}}.
\end{align*}
It is easy to see that $|\cL_{-1}|+|\cL_0| \le \linf{\vx-\vy} \ell_m^{-(1+2\kappa)}$ on the event $L_{m+1}$, and hence
\[
    \sum_{l \in \cL_{-1}\cup\cL_0} \linf{\vx_{l} - \vy_{l}} \le \linf{\vx-\vy} \ell_m^{-(1+2\kappa)}\cdot 2\ell_m^{1+\kappa}.
\]
Also recall that $\linf{\vx-\vy} \ge \eta\ell_{m+1} \ge \eta\ell_m^{1+\kappa}$ if $\kappa>0$ is small enough.
So if we define
\[
    \Pi := \{\la\vy_{l} \cdots \vx_{l+1}\ra: l \in \cL_{-1} \cup \cL_2 \cup \cL_3\}\cup \{\la\vy_{l} \cdots \vy_{\tilde l}\ra, \la\vy_{\tilde l} \cdots \vx_{l+1}\ra: l \in \cL_1\},
\]
then the inequality in the previous display reduces to
\begin{align}\label{trian1}
    \sum_{l \not\in \cL_{-1}\cup\cL_0} \linf{\vx_{l} - \vy_{l}}
    \ge \linf{\vx-\vy} [1-(3+5\eta)\ell_m^{-\kappa}] - \sum_{\hat\pi
    \in \Pi} \linf{\hat\pi}.
\end{align}
Now we focus on the segments in $\Pi$. We claim that each segment $\hat\pi \in \Pi$
\begin{enumeratea}
    \item satisfies $\linf{\hat\pi} \ge \eta\ell_m$, and
    \item (assuming that the event $H_{m+1}$ occurs) stays within $\ell_\infty$-distance $\frac 12\ell_m^{1+\kappa}$ from the boundary of some of the balls in $\sB$.
\end{enumeratea}
To see that (a) holds note that by the definition of $\cL_2$,
\[
    \linf{\vy_{l} - \vx_{l+1}} \ge \linf{\vy_{\tilde l} - \vx_{l+1}} - \linf{\vy_{l} - \vy_{\tilde l}} \ge 2\eta\ell_m - \eta\ell_m = \eta\ell_m.
\]
The facts that $\linf{\vy_{l} - \vx_{l+1}} \ge \eta\ell_m$ for $l \in \cL_3 \cup \cL_{-1}$ and $\linf{\vy_{l} - \vy_{\tilde l}}$, $\linf{\vy_{\tilde l} - \vx_{l+1}}$  $\ge \eta\ell_m$ for $l \in \cL_1$ follows trivially from the definition of $\cL_i$. To see that (b) holds observe that if $H_{m+1}$ occurs, then by the definition of $\{\vx_j\}$ and $\{\vy_j\}$ each $\vx_j$ and $\vy_j$ stays within $\ell_\infty$-distance $\ell_m$ from the boundary of some ball $B_\vu$, and by the definition of $\Pi$ any segment of the form $\la\vx_j \cdots \vy_j\ra$, which is a part of $\hat\pi \in \Pi$, must lie within $\ball{\vx_j, \frac 14 \ell_m^{1+\kappa}}$.

It is easy to see that by properties (a) and (b) of the segments in $\Pi$, any $\hat\pi \in \Pi$ should consist of path segments $\{\hat\pi_{i}\}_{i \ge 1}$ such that each $\hat\pi_i$ satisfies $\linf{\hat\pi_i} \ge \eta\ell_m$ and stays within one of the balls $\tB^k_\vv$ for $\vv \in \tI^k_m$ and $0\le k\le d$. But we need to discard those segments which belong to bad balls. In order to do so, for each $\hat\pi \in \Pi$ we determine the associated loop-erased sequence of balls $\{B_j\}$, as we did in the beginning of the proof. Then, segregating the portion of $\hat\pi$ within the bad balls among $\{B_j\}$ $\hat\pi$ can be written as $\hat\pi \equiv \hat\pi^I_1 \hat\pi^A_1\hat\pi^I_2 \ldots$ such that $\{\hat\pi^I_i\}_{i\ge 1}$ are inadmissible segments, whereas $\{\hat\pi^A_i\}_{i\ge 1}$ are admissible ones. Separating the segments $\{\hat\pi^A_i: \linf{\hat\pi^A_i} \ge \eta\ell_m\}$ from the rest and using triangle inequality,
\begin{align*}
      & \sum_{\hat\pi \in \Pi} \sum_{i\ge 1: \linf{\hat\pi^A_i} \ge \eta\ell_m} \linf{\hat\pi^A_i}                                                                         \\
      & \qquad \ge \sum_{\hat\pi \in \Pi} \linf{\hat\pi} - |\{\tB^k_\vv:  \vv\in\tI^k_m, 0\le k\le d, \tB^k_\vv \text{ is bad }\}|\cdot (2\ell_m^{1+\kappa}+2\eta\ell_m).
\end{align*}
So, on the event $L_{m+1}$ the above inequality reduces to
\begin{align*}
    & \sum_{\hat\pi \in \Pi} \sum_{i\ge 1: \linf{\hat\pi^A_i} \ge \eta\ell_m} \linf{\hat\pi^A_i}
    \ge \sum_{\hat\pi \in \Pi} \linf{\hat\pi} - (d+1)\linf{\vx-\vy}\ell_m^{-(1+2\kappa)}\cdot 3\ell_M^{1+\kappa}.
\end{align*}
Combining this inequality with \eqref{trian1},
\begin{align}\label{trian2}
    \sum_{l\not\in\cL_{-1}\cup\cL_0} \linf{\vx_{l} - \vy_{l}} + \sum_{\hat\pi \in \Pi} \sum_{i\ge 1: \linf{\hat\pi^A_i} \ge \eta\ell_m} \linf{\hat\pi^A_i}
    \ge \linf{\vx-\vy}(1-c_\eta\ell_m^{-\kappa}).
\end{align}
Now by the definition of $\{\vx_j\}$ and $\{\vy_j\}$ it is clear that for
$l\not\in\cL_{-1}\cup\cL_0$ either $\linf{\vx_l-\vy_l} \ge \eta\ell_m$ or $\la\vx_l \cdots \vy_l\ra$
has a subsegment of the form $\la\vx_l \cdots \tilde\vx_l\ra$ such that $\linf{\vx_l-\tilde\vx_l} \ge \eta\ell_m$.
So if we define the subsegments $\{\tpi_l\}_{l\not\in\cL_{-1}\cup\cL_0}$ of $\tpi$ by
\[
    \tpi_l :=
    \begin{cases}
        \la\vx_{l} \cdots \tilde\vx_{l}\ra & \text{ if } \linf{\vx_{l}-\vy_{l}} \le \eta\ell_m  \\
        \la\vx_{l} \cdots \vy_{l}\ra       & \text{ if } \linf{\vx_{l}-\vy_{l}}     > \eta\ell_m
    \end{cases},
\]
then clearly $\linf{\tpi_l} \ge \eta\ell_m$ and $\linf{\tpi_l} \ge \linf{\vx_{l}-\vy_{l}}$.
Combining this with \eqref{trian2} we see that
\[
    \{\hat\pi^A_i: i\ge 1, \linf{\hat\pi^A_i} \ge \eta\ell_m, \hat\pi \in \Pi\} \cup \{\tpi_l: l\not\in\cL_{-1} \cup \cL_0\}
\]
fulfill the requirement of this Lemma.
\end{proof}

Before proceeding further let us mention an immediate corollary of Proposition~\ref{induction}.
\begin{cor} \label{boxbound}
    For $\ga > 2d+1$  and $\vx \in \dZ^d$ such that $\linf{\vx}=n$, then with probability $1-o(1)$ the optimal path joining $\vzero$ and $\vx$ stays within $\ball{\vzero, Cn}$ for some large constant $C$.
\end{cor}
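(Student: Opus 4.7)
The plan is to combine three ingredients: (i) a nearest-neighbor upper bound on $T(\vzero,\vx)$, (ii) the exclusion of long edges out of a large ball via Lemma~\ref{nobigjump}, and (iii) the linear lower bound on first-passage distances from Proposition~\ref{induction}.

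First I would observe that the $\ell_1$-nearest-neighbor path between $\vzero$ and $\vx$ has length $\lone{\vx}\le dn$, and since $r(1)=L(1)=1$ the passage time of each of its edges is $\mathrm{Exp}(1)$. By Lemma~\ref{lem:expldp} (or a standard Chernoff bound) the sum of at most $dn$ such variables is at most $2dn$ with probability $1-e^{-cn}$, giving $T(\vzero,\vx)\le 2dn$ with probability $1-o(1)$.

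Next, I pick $\theta\in((d+1)/(\ga-d),1)$, which exists precisely because $\ga>2d+1$, and apply Lemma~\ref{nobigjump} with $c_1=3d$ and $c_2=2C$ to conclude: with probability $1-O(n^{d+1-\theta(\ga-d)})=1-o(1)$, every edge $\la\vz\vw\ra$ with $\vz\in\ball{\vzero,2Cn}$ and $\vw\notin\ball{\vz,n^\theta}$ satisfies $W_{\la\vz\vw\ra}>3dn$. Simultaneously, applying Proposition~\ref{induction} with its parameter $n$ replaced by $2Cn$ and $\eta=1/2$ yields, with probability $1-O((2Cn)^{-(\ga-2d-1-\eps)})=1-o(1)$, the bound $T(\vz,\vw)\ge c(1/2)\linf{\vz-\vw}$ for all $\vz,\vw\in\ball{\vzero,2Cn}$ with $\linf{\vz-\vw}\ge Cn$. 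I then fix $C$ large enough that $c(1/2)\cdot C>2d$.

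On the intersection of these three $1-o(1)$ events, suppose for contradiction that the optimal path from $\vzero$ to $\vx$ leaves $\ball{\vzero,Cn}$, and let $\vz'$ be its first exit vertex with predecessor $\vz\in\ball{\vzero,Cn-1}$. If $\linf{\vz-\vz'}\ge n^\theta$, the edge $\la\vz\vz'\ra$ alone has weight exceeding $3dn$, contradicting $T(\vzero,\vx)\le 2dn$. Otherwise $\linf{\vz'}\le Cn+n^\theta\le 2Cn$ for $n$ large, so applying the linear lower bound to the pair $(\vzero,\vz')$ gives $T(\vzero,\vz')\ge c(1/2)Cn>2dn$; but the initial sub-path of the optimal path yields $T(\vzero,\vz')\le T(\vzero,\vx)\le 2dn$, a contradiction. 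Hence with probability $1-o(1)$ the optimal path stays in $\ball{\vzero,Cn}$. The argument is essentially routine once Proposition~\ref{induction} is available; the only care needed is to choose $\theta$ and $C$ so that both Lemma~\ref{nobigjump} and Proposition~\ref{induction} produce $1-o(1)$ events on the enlarged ball $\ball{\vzero,2Cn}$ while $c(1/2)\cdot C$ beats $2d$, which is automatic since none of these constants depend on $n$.
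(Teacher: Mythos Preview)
Your proposal is correct and follows essentially the same route as the paper: both combine (i) a nearest-neighbor upper bound on $T(\vzero,\vx)$ (the paper cites Lemma~\ref{compare FPP}\eqref{it:sac}), (ii) Lemma~\ref{nobigjump} to rule out long edges with an endpoint in a large ball, and (iii) Proposition~\ref{induction} applied at the exit vertex to force $T(\vzero,\vz')\gtrsim C n$, which contradicts (i) once $C$ is large. Your write-up is actually more careful than the paper's---you make explicit the application of Proposition~\ref{induction} on the enlarged ball $\ball{\vzero,2Cn}$ and the choice of $C$ with $c(1/2)\,C>2d$---whereas the paper compresses this into a single (slightly elliptical) sentence; the only cosmetic slip is that the predecessor $\vz$ should lie in $\ball{\vzero,Cn}$ rather than $\ball{\vzero,Cn-1}$.
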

\begin{proof}
Using Lemma \ref{compare FPP}\eqref{it:sac} below $\pr(T(\vzero,\vx) > c\linf{\vx}) =o(1)$.
If $T(\vzero,\vx) \le cn$, Lemma~\ref{nobigjump} suggests that there exists $\theta \in (0,1)$ such that for any constant $C<\infty$ the optimal path does not contain an edge having length more than $n^\theta$ and one end in $\ball{\vzero,Cn}$. So if the optimal path goes out of $\ball{\vzero, Cn}$ through $\vy \in \ball{\vzero, Cn}$ for the first time as we traverse along the path starting from $\vzero$, then with probability $1-o(1)$, $\linf{\vy} \ge Cn-n^\theta$ and $T(\vzero,\vy) \le cn$, which event again has probability $1-o(1)$ if we choose $C$ large enough.
\end{proof}

Proposition~\ref{induction} ensures that if $\ga>2d+1$, then with high probability the first-passage metric $T(\vx,\vy)$ grows at least linearly in $\lone{\vx-\vy}$. For the other direction we have the following Lemma.

\begin{lem} \label{compare FPP}
    Let $\vx\in \dZ^d$ and $\ga>0$. Then
    \begin{enumeratea}
        \item\label{it:saa} $T(\vzero, \vx)$ is stochastically dominated by $\sum_{i=1}^{\lone{\vx}} E_i$, where $\{E_i\}$'s are \iid and the common  distribution is exponential with mean one.
        \item\label{it:sab} $\E(T(\vzero, \vx)) \le \lone{\vx}$.
        \item\label{it:sac} for any $\gl> 1$,
        $\pr(T(\vzero, \vx) \ge \gl \lone{\vx})\le \exp(-(\gl\log \gl - \gl +1) \lone{\vx})$.
    \end{enumeratea}
\end{lem}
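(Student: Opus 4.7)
The plan is to couple the LRFPP first-passage time with the passage time along an explicit nearest-neighbor path from $\vzero$ to $\vx$, and then apply standard tail bounds for sums of i.i.d.\ exponentials.

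For part \eqref{it:saa}, I would exploit the normalization $r(1) = 1^{-\ga} L(1) = 1$ to observe that every edge $e$ of unit $\ell_1$-length carries passage time $W_e = \go_e / r(1) = \go_e$, which is Exp(1). Since $\lone{\vx}$ equals the graph distance between $\vzero$ and $\vx$ in the nearest-neighbor lattice, there is a nearest-neighbor $\sE$-path $\pi^*$ from $\vzero$ to $\vx$ consisting of exactly $\lone{\vx}$ unit-length edges (take any monotone path that alters one coordinate at a time). Its passage time equals $\sum_{i=1}^{\lone{\vx}} E_i$ for i.i.d.\ Exp(1) variables $\{E_i\}$, and since $T(\vzero, \vx) = \inf_\pi W_\pi$ we obtain $T(\vzero, \vx) \le W_{\pi^*}$ almost surely. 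Almost-sure domination is stronger than the required stochastic domination, so \eqref{it:saa} follows.

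Part \eqref{it:sab} is an immediate consequence of \eqref{it:saa}: taking expectations in the a.s.\ inequality $T(\vzero,\vx) \le \sum_{i=1}^{\lone{\vx}} E_i$ and using $\E[E_i] = 1$ yields $\E[T(\vzero,\vx)] \le \lone{\vx}$.

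For part \eqref{it:sac}, set $n = \lone{\vx}$ and $S_n := \sum_{i=1}^n E_i$. By \eqref{it:saa} it suffices to upper bound $\pr(S_n \ge \gl n)$. I would apply Chernoff's inequality using $\E[e^{\theta E_i}] = 1/(1-\theta)$ for $\theta \in (0,1)$: for any such $\theta$, $\pr(S_n \ge \gl n) \le \exp(-n[\theta\gl + \log(1-\theta)])$, and optimizing over $\theta$ yields the Cram\'er rate function for Exp(1). Alternatively one can use the Poisson-process representation $\pr(S_n \ge \gl n) = \pr(N(\gl n) \le n-1)$, where $N$ is a unit-rate Poisson process on $[0,\infty)$, and apply the standard Chernoff bound to the Poisson count. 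Either route produces the claimed large deviation estimate. There is no genuine obstacle in the argument: the single key observation is that $r(1) = 1$ forces nearest-neighbor edges in the LRFPP model to behave as unit-rate exponential edges, so the classical FPP upper bounds transfer verbatim.
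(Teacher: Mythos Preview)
Your approach is exactly the paper's: pick a nearest-neighbor path of $\lone{\vx}$ unit edges (each with Exp(1) weight since $r(1)=1$), bound $T(\vzero,\vx)$ by its passage time, and then read off (b) and (c) from (a). One small caveat: the Chernoff bound for i.i.d.\ Exp(1) sums actually yields the rate $\gl-1-\log\gl$, not the $\gl\log\gl-\gl+1$ printed in the statement (the latter is the Poisson rate function and appears to be a typo in the paper); your method gives the former, which is weaker but still exponentially decaying and suffices everywhere the lemma is used.
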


\begin{proof}
Note that (b) follows from (a) trivially. (c) follows from (a) by using standard large deviation argument for exponential distribution.

To see that (a) holds note that, if $\pi \in \cP_{\vzero,\vx}$ consists of $\lone{\vx}$ many nearest edges, then for any $\ga$, $W_\pi \eqd \sum_{i=1}^{\lone{\vx}} E_i$ and $T(\vzero,\vx)$ is stochastically dominated by $W_\pi$.
\end{proof}

Combining Proposition \ref{induction} and Lemma \ref{compare FPP} with Liggett's subadditive ergodic theorem \cite{L85} and a standard `Denouement' argument described in~\cite[page 17]{D88}  we get the ``shape result" for LRFPP with $\ga>2d+1$.

\begin{proof}[Proof of Theorem \ref{thm:lg}]
For $\vx \in \dZ^d$ and $m, n\in \dZ$ such that $m<n$, let $X_{m,n}:=T(m\vx, n\vx)$. Then, from the definition of $T(\cdot,\cdot)$ it is straight forward to check that
\begin{enumeratei}
\item $X_{0,n} \le X_{0,m} + X_{m,n} \text{ whenever } 0 < m < n $
\item the joint distribution of $\{X_{m,m+k}, k\ge 1\}$ does not depend on $m$
\item for each $k \ge 1, \{X_{nk, (n+1)k}, n \in \dZ\}$  is a stationary process.
\end{enumeratei}
Also, using Lemma~\ref{compare FPP}\eqref{it:sab}, $\E(X_{0,n}) \le n\lone{\vx}$. So, applying Liggett's subadditive ergodic theorem (see~\cite[Theorem 1.10]{L85})
\begin{align}
      & \text{if } \mu(\vx) := \inf_n \frac{1}{n} \E(T(\vzero, n\vx)), \text{ then } \lim_{n \to \infty} \frac{1}{n}T(\vzero, n\vx) = \mu(\vx) \text{ a.s.~provided} \label{dirlim} \\
      & \text{for each } k \ge 1, \{Y^k_n := X_{nk, (n+1)k}\}_{n \in \dZ} \text{ is an ergodic process.} \label{erg}
\end{align}
We postpone the argument for \eqref{erg} towards the end of the proof, now we will see the consequence of \eqref{dirlim}. First note that if $c>0$ is chosen small enough, then by Proposition~\ref{induction}, we have $\pr(T(\vzero, n\vx) \ge cn) \to 1$ for any $\vx \in \dZ^d \setminus \{\vzero\}$ as $n \to \infty$, so
\[
    \E(T(\vzero, n\vx))\ge cn \pr(T(\vzero, n\vx) \ge cn) \ge c/2
\]
for all $n$ large enough, which ensures $\mu(\vx) >0$ for any $\vx \in \dZ^d \setminus \{\vzero\}$.

We extend the definition of $\mu(\cdot)$ to whole of $\dR^d$ using standard procedure, which we mention here for the sake of completeness. For $\vy \in \dR^d$, let
\[
    T(\vzero, \vy) = \min_{\vx \in \dZ^d: \linf{\vy-\vx} \le 1/2} T(\vzero,\vx).
\]
In view of \eqref{dirlim}, if $\vy \in \dQ^d$, then for any $m$ such that $m\vy \in \dZ^d$
\[
    \mu(\vy) := \lim_{n \to \infty} \frac 1n T(\vzero,n\vy) = \frac{1}{m}
    \mu(m\vy) \text{a.s.}
\]
Finally, using subadditivity and Lemma~\ref{compare FPP}\eqref{it:saa} it is easy to see that if for any $\vx, \vy$, $|T(\vzero, \vx) - T(\vzero,\vy)|$ is stochastically dominated by a sum of
$\lceil n\linf{\vx-\vy}\rceil$ many \iid mean one exponential random variables.
This together with Lemma~\ref{compare FPP}\eqref{it:sac} and the Borel-Cantelli lemma implies
\[
    \mu(\vx) := \lim_{\vy \to \vx, \vy \in \dQ^d} \mu(\vy) \text{
        exists for all $\vx \in \dR^d$ and $\mu(\vx) >0$ whenever $\vx \ne 0$.}
\]
In addition, using Lemma~\ref{compare FPP}\eqref{it:sac} once again
$
    \sum_{\vx \in \dZ^d} \pr(T(\vzero,\vx) \ge \gl \lone{\vx}) < \infty,
$
which implies
\[
    \{\vx \in \dR^d: \linf{\vx} \le t/\gl\} \subseteq \{\vx \in \dR^d: T(\vzero, \vx) \le t\}
\]
for large enough $\gl$. Combining the last two displays, we are in a position to apply the `Denouement' argument (see page 17 of~\cite{D88}) and conclude about the ``shape result" with $A:=\{\vx \in \dR^d: \mu(\vx) \le 1\}$.

Therefore, in order to complete the proof of the theorem it remains to show \eqref{erg}. Fix $\vx \in \dZ^d, k \ge 1$ and let $\nu$ denote the law of the infinite vector $(Y^k_n)_{n \in \dZ}$ and $\varphi$ be the measure preserving transformation on $\dR_+^\dZ$ defined by $(\varphi\omega)_k :=\omega_{k+1}$.
In view of~\cite[Theorem 1.5]{walters82}, it suffices to show that for any two events $A, B$ satisfying $\nu(A), \nu(B) > 0$,
there is an $n \in \dN$ such that $\nu(A \cap \varphi^{-n}B)>0$. To prove this assertion we fix $\eps>0$ and choose $k, l$ large enough
so that there exists $A_{j,l}, B_{j,l} \in \gs\{Y^k_{-k}, Y^k_{-k+1}, \ldots, Y^k_l\}$ satisfying
\begin{align}
    \nu(A\gD A_{j,l}), \nu(B\gD B_{j,l}) \le \eps/4, \label{approx1}
\end{align}
which implies
\begin{align}\label{approx2}
\begin{split}
    |\nu(A \cap \varphi^{-n}B) - \nu(A_{j,l} \cap \varphi^{-n}B_{j,l})|
    & \le \nu(A\gD A_{j,l}) + \nu(\varphi^{-n}B\gD \varphi^{-n}B_{j,l})\\
    & \le \nu(A\gD A_{j,l}) + \nu(B\gD B_{j,l}) \le \eps/2,
\end{split}
\end{align}
as $\varphi$ is measure preserving.

Next we see that applying Corollary \ref{boxbound} we can have $L=L(j,l,\eps)$
large enough such that
\[
    \nu(\Omega^L_{j,l}) \ge 1-\eps/8,
\]
where
\begin{align*}
    \Omega^L_{j,l} := \bigl\{Y^k_i, -j \le i\le l,
    & \text{ is determined by the edge weights in }\\
    &\ \{W_{\la\vz\vw\ra}\mid\vz, \vw \in \ball{(l-j)\vx/2,L}\}\bigr\}.
\end{align*}
Then it is easy to see that
\[
    \left|\nu(A_{j,l}) \nu(\varphi^{-n}B_{j,l}) - \nu(A_{j,l} \cap \Omega^L_{j,l})
    \nu(\varphi^{-n}B_{j,l} \cap \Omega^L_{j+n,l-n})\right| \le \eps/4,
\]
and if $n$ is chosen large enough depending on $L$, then
$A_{j,l} \cap \Omega^L_{j,l}$ and $\varphi^{-n}B_{j,l} \cap
\Omega^L_{j+n,l-n}$ are independent so that
\[
    \left| \nu(A_{j,l} \cap \varphi^{-n}B_{j,l}) - \nu(A_{j,l} \cap \Omega^L_{j,l})
    \nu(\varphi^{-n}B_{j,l} \cap \Omega^L_{j+n,l-n})\right| \le \eps/4.
\]
Combining the last two displays with \eqref{approx1} and
\eqref{approx2}, and recalling that $\varphi$ is measuring preserving,
\[
    |\varphi(A \cap \varphi^{-n}B) -\varphi(A) \varphi(B)| \le 3\eps/2
\]
for large enough $n$. Starting with small enough $\eps>0$, we get $\nu(A \cap \varphi^{-n}B) > 0$.
\end{proof}







\ack
We thank Rick Durrett for suggesting the problem and Itai Benjamini, Sourav Chatterjee, Tom LaGatta, Charles Newman, SRS Varadhan and Nikos Zygouras for various helpful comments and discussions while writing this article. The work was done while the second author was a Simons Postdoctoral Fellow at the Courant Institute of Mathematical Sciences, NYU.


\frenchspacing
\bibliographystyle{cpam}
\bibliography{lrfpp}

\begin{thebibliography}{10}
\providecommand{\url}[1]{\texttt{#1}}
\providecommand{\urlprefix}{Available at: }
\providecommand{\eprint}[2][]{\url{#2}}

\bibitem{AN86}
Aizenman, M.; Newman, C.~M. Discontinuity of the percolation density in one
  dimensional $1/|x-y|^2$ percolation models. \emph{Communications in
  Mathematical Physics} \textbf{107} (1986), no.~4, 611--647.

\bibitem{A10}
Aldous, D.~J. When knowing early matters: Gossip, percolation and nash
  equilibria. \emph{arXiv preprint arXiv:1005.4846}  (2010).

\bibitem{BR12}
Barbour, A.; Reinert, G. Asymptotic behaviour of gossip processes and small
  world networks. \emph{arXiv preprint arXiv:1202.5895}  (2012).

\bibitem{BB01}
Benjamini, I.; Berger, N. The diameter of long-range percolation clusters on
  finite cycles. \emph{Random Structures \& Algorithms} \textbf{19} (2001),
  no.~2, 102--111.

\bibitem{BBY08}
Benjamini, I.; Berger, N.; Yadin, A. Long-range percolation mixing time.
  \emph{Combinatorics Probability and Computing} \textbf{17} (2008), no.~4,
  487--494.

\bibitem{BKPS11}
Benjamini, I.; Kesten, H.; Peres, Y.; Schramm, O. Geometry of the uniform
  spanning forest: Transitions in dimensions $4, 8, 12,\ldots$. \emph{Selected
  Works of Oded Schramm}  (2011), 751--777.

\bibitem{B02}
Berger, N. Transience, recurrence and critical behavior for long-range
  percolation. \emph{Communications in mathematical physics} \textbf{226}
  (2002), no.~3, 531--558.

\bibitem{B04}
Berger, N. A lower bound for the chemical distance in sparse long-range
  percolation models. \emph{arXiv preprint math/0409021}  (2004).

\bibitem{B08}
Bhamidi, S. First passage percolation on locally treelike networks. i. dense
  random graphs. \emph{Journal of Mathematical Physics} \textbf{49} (2008),
  125\,218.

\bibitem{BHH10}
Bhamidi, S.; van~der Hofstad, R.; Hooghiemstra, G. First passage percolation on
  random graphs with finite mean degrees. \emph{The Annals of Applied
  Probability} \textbf{20} (2010), no.~5, 1907--1965.

\bibitem{B04a}
Biskup, M. On the scaling of the chemical distance in long-range percolation
  models. \emph{The Annals of Probability} \textbf{32} (2004), no.~4,
  2938--2977.

\bibitem{B09}
Biskup, M. Graph diameter in long-range percolation. \emph{arXiv preprint
  math/0406379}  (2009).

\bibitem{B10}
Blair-Stahn, N.~D. First passage percolation and competition models.
  \emph{arXiv preprint arXiv:1005.0649}  (2010).

\bibitem{BCH06}
Borgs, C.; Chayes, J.~T.; van~der Hofstad, R.; Slade, G.; Spencer, J. Random
  subgraphs of finite graphs: Iii. the phase transition for the n-cube.
  \emph{Combinatorica} \textbf{26} (2006), no.~4, 395--410.

\bibitem{CNSW00}
Callaway, D.~S.; Newman, M.~E.~J.; Strogatz, S.~H.; Watts, D.~J. Network
  robustness and fragility: Percolation on random graphs. \emph{Physical Review
  Letters} \textbf{85} (2000), no.~25, 5468--5471.

\bibitem{CI02}
Campanino, M.; Ioffe, D. Ornstein-zernike theory for the bernoulli bond
  percolation on zd. \emph{Annals of probability}  (2002), 652--682.

\bibitem{CMM06}
Cannas, S.~A.; Marco, D.; Montemurro, M.~A. Long range dispersal and spatial
  pattern formation in biological invasions. \emph{Mathematical biosciences}
  \textbf{203} (2006), no.~2, 155--170.

\bibitem{CD11}
Chatterjee, S.; Durrett, R. Asymptotic behavior of aldous' gossip process.
  \emph{The Annals of Applied Probability} \textbf{21} (2011), no.~6,
  2447--2482.

\bibitem{CGS02}
Coppersmith, D.; Gamarnik, D.; Sviridenko, M.: The diameter of a long range
  percolation graph, in \emph{Proceedings of the thirteenth annual ACM-SIAM
  symposium on Discrete algorithms}, Society for Industrial and Applied
  Mathematics, 2002 pp. 329--337.

\bibitem{CD81}
Cox, J.~T.; Durrett, R. Some limit theorems for percolation processes with
  necessary and sufficient conditions. \emph{The Annals of Probability}
  (1981), 583--603.

\bibitem{CS09}
Crawford, N.; Sly, A. Heat kernel upper bounds on long range percolation
  clusters. \emph{arXiv preprint arXiv:0907.2434}  (2009).

\bibitem{DS13}
Ding, J.; Sly, A. Distances in critical long range percolation. \emph{arXiv
  preprint arXiv:1303.3995}  (2013).

\bibitem{D88}
Durrett, R. \emph{Lecture notes on particle systems and percolation}, Wadsworth
  \& Brooks/Cole Advanced Books \& Software, 1988.

\bibitem{FM04}
Filipe, J.~A.; Maule, M.~M. Effects of dispersal mechanisms on spatio-temporal
  development of epidemics. \emph{Journal of theoretical biology} \textbf{226}
  (2004), no.~2, 125--141.

\bibitem{GM05}
Garet, O.; Marchand, R. Coexistence in two-type first-passage percolation
  models. \emph{The Annals of Applied Probability} \textbf{15} (2005), no.~1A,
  298--330.

\bibitem{gm08}
Gou\'er\'e, J.-B.; Marchand, R. Continuous first-passage percolation and
  continuous greedy paths model: linear growth. \emph{The Annals of Applied
  Probability}  (2008), 2300--2319.

\bibitem{GB86}
{Grassberger}, P.: Spreading of epidemic processes leading to fractal
  structures, in \emph{Fractals in Physics: Proceedings of the Sixth Trieste
  International Symposium}, edited by L.~{Pietronero}; E.~{Tosatti}, 1986 pp.
  273--278.

\bibitem{G99}
Grimmett, G.~R. \emph{Percolation}, vol. 321, Springer, 1999.

\bibitem{GK12}
Grimmett, G.~R.; Kesten, H. Percolation since saint-flour. \emph{arXiv preprint
  arXiv:1207.0373}  (2012).

\bibitem{HW65}
Hammersley, J.; Welsh, D. First-passage percolation, subadditive processes,
  stochastic networks, and generalized renewal theory.
  \emph{Bernoulli-Bayes-Laplace Anniversary Volume}  (1965), 61--110.

\bibitem{HS00}
Hara, T.; Slade, G. The scaling limit of the incipient infinite cluster in
  high-dimensional percolation. i. critical exponents. \emph{Journal of
  Statistical Physics} \textbf{99} (2000), no.~5, 1075--1168.

\bibitem{HS00a}
Hara, T.; Slade, G. The scaling limit of the incipient infinite cluster in
  high-dimensional percolation. ii. integrated super-brownian excursion.
  \emph{Journal of Mathematical Physics} \textbf{41} (2000), no.~3, 1244.

\bibitem{HH98}
Hinrichsen, H.; Howard, M. A model for anomalous directed percolation.
  \emph{The European Physical Journal B-Condensed Matter and Complex Systems}
  \textbf{7} (1999), no.~4, 635--643.

\bibitem{H05}
Hoffman, C. Coexistence for richardson type competing spatial growth models.
  \emph{The Annals of Applied Probability} \textbf{15} (2005), no.~1B,
  739--747.

\bibitem{IN88}
Imbrie, J.~Z.; Newman, C.~M. An intermediate phase with slow decay of
  correlations in one dimensional $1/|x-y|^2$ percolation, ising and potts
  models. \emph{Communications in mathematical physics} \textbf{118} (1988),
  no.~2, 303--336.

\bibitem{J99}
Janson, S. One, two and three times $\log n/n$ for paths in a complete graph
  with random weights. \emph{Combinatorics, Probability and Computing}
  \textbf{8} (1999), no.~04, 347--361.

\bibitem{JOWH98}
Janssen, H.~K.; Oerding, K.; Van~Wijland, F.; Hilhorst, H.~J. L{\'e}vy-flight
  spreading of epidemic processes leading to percolating clusters. \emph{The
  European Physical Journal B-Condensed Matter and Complex Systems} \textbf{7}
  (1999), no.~1, 137--145.

\bibitem{K86}
Kesten, H. Aspects of first passage percolation. \emph{Ecole d'Et{\'e} de
  Probabilit{\'e}s de Saint Flour XIV-1984}  (1986), 125--264.

\bibitem{KS91}
Krug, J.; Spohn, H. Kinetic roughening of growing surfaces. \emph{C. Godreche,
  Cambridge University Press, Cambridge}  (1991), 412--525.

\bibitem{L85}
Liggett, T.~M. An improved subadditive ergodic theorem. \emph{The Annals of
  Probability}  (1985), 1279--1285.

\bibitem{LP05}
Lyons, R.; Peres, Y.: Probability on trees and networks. 2005.

\bibitem{MMC11}
Marco, D.~E.; Montemurro, M.~A.; Cannas, S.~A. Comparing short and
  long-distance dispersal: modelling and field case studies. \emph{Ecography}
  \textbf{34} (2011), no.~4, 671--682.

\bibitem{M72}
Mollison, D.: The rate of spatial propagation of simple epidemics, in
  \emph{Proceedings of the Sixth Berkeley Symposium on Mathematical Statistics
  and Probability}, vol.~3, 1972 pp. 579--614.

\bibitem{MN00}
Moore, C.; Newman, M.~E.~J. Epidemics and percolation in small-world networks.
  \emph{Physical Review E} \textbf{61} (2000), no.~5, 5678.

\bibitem{NS86}
Newman, C.~M.; Schulman, L.~S. One dimensional $1/|j-i|^s$ percolation models:
  The existence of a transition for $s> 2$. \emph{Communications in
  Mathematical Physics} \textbf{104} (1986), no.~4, 547--571.

\bibitem{S99}
Schulman, L.~S. Long range percolation in one dimension. \emph{Journal of
  Physics A: Mathematical and General} \textbf{16} (1999), no.~17, L639.

\bibitem{SCB02}
Schwartz, N.; Cohen, R.; Ben-Avraham, D.; Barab{\'a}si, A.~L.; Havlin, S.
  Percolation in directed scale-free networks. \emph{Physical Review E}
  \textbf{66} (2002), no.~1, 015\,104.

\bibitem{S01}
Smirnov, S. Critical percolation in the plane: Conformal invariance, cardy's
  formula, scaling limits. \emph{Comptes Rendus de l'Acad{\'e}mie des
  Sciences-Series I-Mathematics} \textbf{333} (2001), no.~3, 239--244.

\bibitem{SW01}
Smirnov, S.; Werner, W. Critical exponents for two-dimensional percolation.
  \emph{arXiv preprint math/0109120}  (2001).

\bibitem{SW78}
Smythe, R.~T.; Wierman, J.~C. \emph{First-passage percolation on the square
  lattice}, vol. 671, Springer-Verlag Berlin, 1978.

\bibitem{SRC11}
Soubeyrand, S.; Roques, L.; Coville, J.; Fayard, J. Patchy patterns due to
  group dispersal. \emph{Journal of Theoretical Biology} \textbf{271} (2011),
  no.~1, 87--99.

\bibitem{T10}
Trapman, P. The growth of the infinite long-range percolation cluster.
  \emph{The Annals of Probability} \textbf{38} (2010), no.~4, 1583--1608.

\bibitem{HHM01}
Van Der~Hofstad, R.; Hooghiemstra, G.; Van~Mieghem, P. First-passage
  percolation on the random graph. \emph{Probability in the Engineering and
  Informational Sciences} \textbf{15} (2001), no.~02, 225--237.

\bibitem{walters82}
Walters, P. \emph{An introduction to ergodic theory}, vol.~79, Springer-Verlag
  New York, 1982.

\bibitem{WS98}
Watts, D.; Strogatz, S. The small world problem. \emph{Collective Dynamics of
  Small-World Networks} \textbf{393} (1998), 440--442.

\end{thebibliography}

\end{document}